\documentclass[a4paper]{amsart}
\usepackage[english]{babel}
\usepackage[utf8]{inputenc}
\usepackage[T1]{fontenc}
\usepackage{lmodern}

\usepackage[shortlabels]{enumitem}
\setlist[enumerate]{label={\arabic*.}}
\setlist[description]{font=\normalfont\itshape}

\usepackage[dvipsnames]{xcolor}
\usepackage[backref]{hyperref}
\hypersetup{colorlinks=true,linkcolor=Maroon,citecolor=OliveGreen}

\usepackage{cleveref}
\usepackage{microtype}

\usepackage{setspace}
\usepackage{amsmath,amsfonts,amsthm,mathtools,commath,amssymb}

\def\N{\mathbf{N}}
\def\Z{\mathbf{Z}}

\def\R{\mathbf{R}}
\def\C{\mathbf{C}}
\def\F{\mathbf{F}}
\def\P{\mathbf{P}}
\def\Pr{\mathbf{P}}

\DeclarePairedDelimiter\floor{\lfloor}{\rfloor}

\renewcommand{\gcd}{\mathrm{gcd}}

\DeclareMathOperator{\ad}{ad}
\DeclareMathOperator{\tr}{tr}
\DeclareMathOperator{\lin}{Lin}
\DeclareMathOperator{\lspace}{L^2}
\DeclareMathOperator{\slfrak}{\mathfrak{sl}}
\DeclareMathOperator{\gfrak}{\mathfrak{g}}

\DeclareMathOperator{\SL}{SL}
\DeclareMathOperator{\Sym}{Sym}
\DeclareMathOperator{\real}{Re}
\DeclareMathOperator{\dist}{d}
\DeclareMathOperator{\Stab}{Stab}
\DeclareMathOperator{\id}{id}
\DeclareMathOperator{\rank}{rank}
\DeclareMathOperator{\minnorm}{\Delta}

\DeclareMathOperator{\diam}{diam}

\newtheorem{theorem}{Theorem}[section]
\newtheorem*{theorem*}{Theorem}
\newtheorem{lemma}[theorem]{Lemma}
\newtheorem{proposition}[theorem]{Proposition}
\newtheorem{corollary}[theorem]{Corollary}
\newtheorem*{example*}{Example}

\theoremstyle{definition}

\newtheorem{remark}[theorem]{Remark}

\newtheorem*{conjecture*}{Conjecture}

\usepackage{todonotes}

\begin{document}
\baselineskip=13pt 

\title{Random Lie bracket on $\mathfrak{sl}_2(\F_p)$}

\author{Urban Jezernik}
\address{Urban Jezernik, Faculty of Mathematics and Physics, University of Ljubljana and Institute of Mathematics, Physics, and Mechanics, Jadranska 19, 1000 Ljubljana, Slovenia}
\email{urban.jezernik@fmf.uni-lj.si}

\author{Matevž Miščič}
\address{Matevž Miščič, Faculty of Mathematics and Physics, University of Ljubljana and Institute of Mathematics, Physics, and Mechanics, Jadranska 19, 1000 Ljubljana, Slovenia}
\email{matevz.miscic@imfm.si}

\thanks{UJ is supported by the Slovenian Research and Innovation Agency program P1-0222 and grants J1-50001, J1-4351, J1-3004, N1-0217. MM is supported by the Slovenian Research and Innovation Agency program P1-0222 and by the Institute of Mathematics, Physics, and Mechanics.}
\begin{abstract}
    We study a random walk on the Lie algebra $\slfrak_2(\F_p)$ where new elements are produced by randomly applying adjoint operators of two generators. Focusing on the generic case where the generators are selected at random, we analyze the limiting distribution of the random walk and the speed at which it converges to this distribution. These questions reduce to the study of a random walk on a cyclic group. We show that, with high probability, the walk exhibits a pre-cutoff phenomenon after roughly $p$ steps. Notably, the limiting distribution need not be uniform and it depends on the prime divisors of $p-1$. Furthermore, we prove that by incorporating a simple random twist into the walk, we can embed a well-known affine random walk on $\F_p$ into the modified random Lie bracket, allowing us to show that the entire Lie algebra is covered in roughly $\log p$ steps in the generic case.
\end{abstract}

\maketitle

\section{Introduction}

Random walks on finite groups are a powerful tool for analyzing their structure and properties (see \cite{diaconis1988group, hildebrand2005survey} and references therein). Of particular significance is their connection to growth and expansion in Cayley graphs of finite simple groups (see, for example, \cite{bourgain2008expansion}). However, extending these techniques, and ultimately the results, to the closely related setting of finite simple Lie algebras is not straightforward. Unlike groups, Lie algebras have two operations: addition and the Lie bracket. The fact that the Lie bracket is non-invertible introduces additional challenges.

In this paper, we introduce and study a random walk on the Lie algebra $\slfrak_2(\F_p)$ that focuses solely on the Lie bracket structure. This walk is defined as follows. Let $A$ and $B$ be a generating pair of the Lie algebra $\slfrak_2(\F_p)$ with $p \geq 3$. Let $Z_k$, for $k \in \N$, be independent random variables uniformly distributed in the set $\{ A, B \}$. Starting with $X_0 = [A, B]$, the random walk evolves according to
\[
    X_k = [Z_k, X_{k - 1}]
    \qquad (k \in \N).
\]
This process generates a sequence $(X_k)_{k \in \N_0}$ that forms a Markov chain on the set $\slfrak_2(\F_p)$. New elements are produced by iteratively applying the adjoint operators associated with the generators, a process we refer to as the \emph{random Lie bracket}.

\begin{example*}
    Let us look at a simulation of the random Lie bracket. Suppose 
    \[
    p = 11, \quad
    A = \begin{pmatrix}
        7 & 5 \\ 2 & 4
    \end{pmatrix}, \quad
    B = \begin{pmatrix}
        8 & 8 \\ 10 & 3
    \end{pmatrix}, \quad
    X_0 = [A,B] = 
    \begin{pmatrix}
     1 & 10 \\
     2 & 10 \\
    \end{pmatrix}.
    \]
    Randomly sample the sequence $(B,A,B,B,B)$ from $\{ A, B \}$ to obtain the values
    \[
        X_1 = \begin{pmatrix}
            4 & 1 \\
            10 & 7 \\
        \end{pmatrix}, \
        X_2 = \begin{pmatrix}
            4 & 7 \\
            8 & 7 \\
        \end{pmatrix}, \
        X_3 = \begin{pmatrix}
            5 & 4 \\
            7 & 6 \\
        \end{pmatrix}, \
        X_4 = \begin{pmatrix}
            5 & 6 \\
            10 & 6 \\
        \end{pmatrix}, \
        X_5 = \begin{pmatrix}
            9 & 5 \\
            6 & 2 \\
        \end{pmatrix}.
    \]
    Observe that $X_2 = 4 X_0$, $X_4 = 5 X_0$.
\end{example*}
Our goal is to analyze the limiting distribution of this random walk and to study the rate at which it converges to that distribution, particularly when the elements $A$ and $B$ are chosen uniformly at random. We prove the following somewhat surprising results.

The random walk $(X_k)_{k \in \N_0}$ exhibits different behavior depending on whether the number of steps is even or odd (as hinted at in the example above). In fact, as we will explain, it suffices to focus on what happens after an even number of steps. We first show that the distribution of $X_{2k}$ is supported on the line $\lin\{[A,B]\}$ and converges (in total variation distance\footnote{The total variation distance between probability measures $\mu$ and $\nu$ is $\dist(\mu, \nu) = \frac12 \norm{\mu - \nu}_1$.}) to a distribution $\nu_{2k}$ on this line in a \emph{pre-cutoff} manner at $k \approx p$.

\begin{theorem}
    \label[theorem]{thm:pre-cutoff}
    For every $\delta, \epsilon > 0$, there are constants $C, c > 0$ such that the following holds. Let $p > C$, let $A,B \in \slfrak_2(\F_p)$ be uniformly random, and let $(X_k)_{k \in \N_0}$ be the associated random Lie bracket. Let $\mu_{2k}$ be the distribution of $X_{2k}$. There is a distribution $\nu_{2k}$ that is uniform on a coset of a subgroup of $\F_p^\ast = \lin\{[A,B]\} \setminus \{ 0 \}$ such that with probability at least $1 - \delta$:
    \begin{enumerate}
        \item For $k < cp$, we have $\dist(\mu_{2k}, \nu_{2k}) > 1 - \epsilon$.
        \item For $k > Cp$, we have $\dist(\mu_{2k}, \nu_{2k}) < \epsilon$.
    \end{enumerate}
\end{theorem}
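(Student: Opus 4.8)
The plan is to first diagonalize the adjoint action. Since $A, B$ are generic, each of them is a regular semisimple element of $\slfrak_2(\F_p)$, so $\ad_A$ has eigenvalues $0, \pm\lambda$ and $\ad_B$ has eigenvalues $0, \pm\mu$ for some $\lambda, \mu \in \overline{\F_p}^\ast$ (generically in $\F_p^\ast$ after a quadratic extension, but one should track the split/non-split cases). The key structural point is that $X_0 = [A,B]$ lies in the two-dimensional subspace $V = \operatorname{im}\ad_A \cap \operatorname{im}\ad_B$ complementary to the shared... — more carefully, $\ad_A$ kills $\lin\{A\}$ and $\ad_B$ kills $\lin\{B\}$, and since $\slfrak_2$ is $3$-dimensional the walk effectively takes place once we understand how $\ad_A$ and $\ad_B$ act on $X_0$ and its images. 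I would pick a basis in which $A$ is diagonal, write $\ad_A$ in Jordan/diagonal form, and track the coordinates of $X_k$; the claim that $X_{2k} \in \lin\{[A,B]\}$ (stated in the text) tells us that after two steps we return to the line, so the two-step operator $\ad_A \ad_B$ (or $\ad_B \ad_A$, $\ad_A^2$, $\ad_B^2$, depending on the random choices) acts on this line as multiplication by a scalar. Thus $X_{2k} = \big(\prod_{j} s_j\big) X_0$ where each $s_j \in \F_p^\ast$ is one of at most a few explicit scalars $\alpha, \beta, \gamma$ determined by $\lambda, \mu$ and the pair of coin flips in steps $2j-1, 2j$.

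**The random walk on $\F_p^\ast$.** Having reduced to $X_{2k} = c_k X_0$ with $c_k = \prod_{j=1}^k s_j$ a product of i.i.d. random variables valued in a fixed small set $S \subseteq \F_p^\ast$, the problem becomes: analyze the multiplicative random walk on the cyclic group $\F_p^\ast$ driven by $S$. Write $\F_p^\ast \cong \Z/(p-1)$ additively via a discrete logarithm; then $c_k$ corresponds to a sum of i.i.d. variables in $\log S \subseteq \Z/(p-1)$, i.e. an ordinary random walk on a cyclic group of order $p-1$. The limiting distribution $\nu_{2k}$ is the uniform distribution on the coset of the subgroup generated by the differences of elements of $\log S$ (this is why $\nu_{2k}$ need not be uniform on all of $\F_p^\ast$ and depends on $\gcd$'s, hence on prime divisors of $p-1$) — and the parity/coset shift accounts for the even/odd dichotomy. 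For the \emph{convergence} and the \emph{pre-cutoff at $k \approx p$}, I would invoke the standard Fourier/eigenvalue analysis of random walks on $\Z/n$: the total variation distance is governed by $\sum_{\chi \neq 1} |\widehat{q}(\chi)|^{2k}$ where $\widehat q(\chi)$ are the characteristic-function values, and for a walk on $\Z/n$ with bounded step support these decay so that order $n^2$ steps give mixing in general, but here — crucially — the steps $\log\alpha, \log\beta, \log\gamma$ are themselves \emph{random} elements of $\Z/(p-1)$ (because $A,B$ are random), and a generic such configuration mixes in order $p$ steps, not $p^2$. This is exactly the phenomenon in Hildebrand–type results on random random walks on $\Z/n$ (the walk $x \mapsto x + \xi$ with $\xi$ uniform on a random small set), which give pre-cutoff at $\Theta(n/\log n)$ or $\Theta(n)$ — I would cite or adapt such a result.

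**Executing the two bounds.** For part (2), the upper bound: after conditioning on a "good" choice of $A, B$ (an event of probability $\geq 1-\delta$), the step distribution on $\Z/(p-1)$ is supported on a set whose successive differences generate a subgroup $H$ of index $m$ (bounded in terms of $\delta$, or allowed to grow slowly), and within the relevant coset the Fourier coefficients satisfy $|\widehat q(\chi)| \leq 1 - c\|\chi\|^2/p^2$ for the "low" frequencies and $\leq 1 - c$ for the rest, \emph{provided} the random steps are well-spread — a second-moment / counting argument over the random choice of $A,B$ shows this holds with high probability. Summing the geometric-type bound gives $\dist(\mu_{2k},\nu_{2k}) < \epsilon$ once $k > Cp$. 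For part (1), the lower bound: one exhibits a test function (a single nontrivial character $\chi$) whose Fourier coefficient has $|\widehat q(\chi)| \geq 1 - C'/p^2$ with high probability — this is automatic since there are $\sim p$ characters and the steps lie in a set of bounded size, so by pigeonhole some low frequency is nearly unmoved — giving $|\widehat{\mu_{2k}}(\chi) - \widehat{\nu_{2k}}(\chi)| = |\widehat q(\chi)|^k \geq e^{-C'k/p^2} \cdot(1+o(1))$, which stays close to $1$ for $k < cp$, forcing $\dist(\mu_{2k},\nu_{2k})> 1-\epsilon$ via the dual characterization of total variation distance.

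**Main obstacle.** The hard part will be step one done \emph{uniformly and with the right genericity bookkeeping}: showing that for a random generating pair $A,B$, the two-step scalars $\alpha,\beta,\gamma \in \F_p^\ast$ are genuine honest functions of $\lambda,\mu$ that (a) are well-defined — i.e. the walk really does collapse to the line $\lin\{[A,B]\}$ after every two steps regardless of the coin flips, which needs the $3$-dimensionality of $\slfrak_2$ and a short bracket computation — and (b) have discrete logarithms that behave like \emph{independent uniform} elements of $\Z/(p-1)$, so that the "random random walk on a cyclic group" machinery applies and delivers genuine pre-cutoff at $\Theta(p)$ rather than merely an $O(p\log p)$ or $O(p^2)$ bound. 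Controlling the joint distribution of $(\lambda,\mu)$ and their logs, and handling the split versus non-split (eigenvalues in $\F_p$ vs. $\F_{p^2}$) cases — including the possibility that the subgroup generated has index divisible by small primes of $p-1$, which is what makes $\nu_{2k}$ non-uniform — is where essentially all the real work lies; the Fourier estimates in parts (1) and (2) are then comparatively routine.
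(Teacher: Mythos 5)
Your overall reduction and your upper-bound sketch track the paper's proof closely: both pass from the Lie algebra to a random walk on $\F_p^\ast \cong \Z_{p-1}$ (the two-step operator acts on $\lin\{[A,B]\}$ as multiplication by one of three scalars), both identify the limiting distribution $\nu_{2k}$ as a coset distribution controlled by $\gcd(\log(\alpha/\beta),\log(\alpha/\gamma),p-1)$, and both run the Fourier/eigenvalue analysis on $\Z_{p-1}$ to locate the pre-cutoff at $\Theta(p)$. One stylistic difference: rather than diagonalizing $\ad_A$, $\ad_B$ and tracking eigenvalues $\lambda,\mu$ through split and non-split cases, the paper works directly with the Gram matrix entries $\alpha = 2\langle A,B\rangle$, $\beta = 2\langle A,A\rangle$, $\gamma = 2\langle B,B\rangle$ via the identity $[A,B,B] = 2\langle B,B\rangle A - 2\langle A,B\rangle B$; this gives the scalars (and the transition probabilities $1/2,1/4,1/4$) immediately and avoids the field-extension bookkeeping you flag as a worry. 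The near-uniformity of $(\log(\alpha/\beta),\log(\alpha/\gamma))$ that you correctly identify as essential is established via a fiber count for the Gram map using Witt's extension lemma.

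The genuine gap is in part (1). You propose to lower-bound $\dist(\mu_{2k},\nu_{2k})$ by evaluating a single near-unimodular character $\chi$ via the dual characterization of total variation. But this can only ever yield $\dist \ge \tfrac12\lvert\widehat{\mu_{2k}}(\chi) - \widehat{\nu_{2k}}(\chi)\rvert$, hence a lower bound of at most $1/2 - \epsilon$, not the stated $1 - \epsilon$. The paper's corresponding Fourier argument gets exactly this weaker $1/2 - \epsilon$ bound, and then proves the full $1 - \epsilon$ bound by an entirely different, probabilistic argument: one pushes forward a box $T \subseteq \Z^3$ of side $\Theta(\sqrt{k})$ around $(k/2,k/4,k/4)$ to a set $S \subseteq \Z_n$, shows that the walk lands in $S$ with probability $> 1 - 2\epsilon$ by Hoeffding (the step-counts concentrate), while $\phi_k(S) < \epsilon$ because $\phi_k$ is uniform on a coset of size $\gtrsim n/(1+1/\delta)$ and $\lvert S\rvert = O(k) = O(cn)$. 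Without an argument of this kind the conclusion $\dist > 1 - \epsilon$ is not reached. (Separately, your pigeonhole estimate $\lvert\widehat{q}(\chi)\rvert \ge 1 - C'/p^2$ is off: with two free parameters $a,b$ the Dirichlet box principle only gives some $u_j$ with $\lVert u_j\rVert \lesssim 1/\sqrt{n}$, hence $\lvert\lambda_j\rvert \ge 1 - O(1/p)$; this weaker but correct bound still suffices for the $1/2 - \epsilon$ conclusion.)
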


After $2k+1$ of steps, the distribution $\nu_{2k}$ equally splits into two distributions, one supported on the line $\lin\{[A,[A,B]]\}$ and the other on the line $\lin\{[B,[A,B]]\}$.

\begin{example*}
Consider the example above. Take $k=50$ and generate 1000 independent samples of the random Lie bracket $X_{100}$. The following table shows the sample distribution:
    \[
    \begin{pmatrix}
        [A,B] & 5 [A,B] & 4 [A,B] & 3 [A,B] & 9 [A,B] \\
        189 & 185 & 193 & 209 & 224
    \end{pmatrix}
    \]
In this case, the distribution $\nu_{100}$ (indeed, any $\nu_{2k}$ with $k>0$) is uniform over the five listed multiples of $[A,B]$. The total variation distance between the sample distribution and $\nu_{100}$ is $0.033$.
\end{example*}

For randomly chosen $A,B$, the distribution $\nu_{2k}$ is \emph{uniform on the whole of} $\lin\{[A,B]\} \setminus \{ 0 \}$ with a certain probability that depends on the prime divisors of $p-1$.

\begin{theorem}
Let $(s, t)$ be a subinterval of
\[
\mathcal I = 
\left[ \frac{1}{\zeta(2)}, \frac23 \right] \cup \left[ \frac{9}{8 \zeta(2)}, \frac34 \right]
\approx [ 0.6079, 0.6667 ] \cup [0.6839, 0.7500].
\]
Then there is a set of primes $p$ of positive lower density with the following property. Let $A, B$ be uniformly random in $\slfrak_2(\F_p)$, and let $\nu$ be the uniform distribution on $\lin\{[A,B]\} \setminus \{ 0 \}$. Then $\P_{A,B}(\nu_{2k} = \nu) \in (s, t)$ for all $k \geq 0$.\footnote{The probability $\P_{A,B}(\nu_{2k} = \nu)$ is independent of $k$, as $\nu_{2k}$ is supported on cosets of the same subgroup of $\lin \{ [A,B] \} \setminus \{ 0 \}$. Let $\nu(p)$ denote this probability. The theorem and its proof imply that the set of limit points of the sequence $\nu(p)$ as $p$ varies is exactly $\mathcal I$.}
\end{theorem}

Based on our inspection of the random Lie bracket on $\slfrak_2(\F_p)$ driven by random generators $A$ and $B$, we can prove an additional result in this setting. Although the random walk requires roughly $p$ steps to approach its limiting distribution, the two elements $A$ and $B$ generate the entire Lie algebra $\slfrak_2(\F_p)$ much more quickly. In fact, we will show that, with high probability, every element of $\slfrak_2(\F_p)$ can be reached after only $O(\log p)$ steps.

We measure steps in a weighted way using the notion of the diameter of a Lie algebra as defined in \cite{dona2023sum}. Let $S$ be a subset of a Lie algebra $\gfrak$. The set $S$ is called \emph{symmetric} if $0 \in S$ and $S = -S$. For a symmetric set $S$, we inductively define the set of weighted $k$-balls as
\[
    S^1 = S, \qquad S^k = \bigcup_{0 < j < k} \left((S^j + S^{k-j}) \cup [S^j, S^{k-j}]\right) \quad (k \geq 2),
\]
where we have denoted $X+Y = \{ x + y \mid x \in X, y \in Y\}$ and $[X,Y] = \{[x,y] \mid x \in X, y \in Y\}$ for subsets $X,Y \subseteq \gfrak$.
If $S$ is a symmetric generating set of a Lie algebra $\gfrak$ over $\F_p$, then the \emph{diameter} $\diam(\gfrak, S)$ of $\gfrak$ with respect to $S$ is the smallest $k$ such that $S^k = \gfrak$.

For any classical Lie algebra $\gfrak$ over $\F_p$, Dona proves in \cite{dona2023sum} that there is a constant $C = O(\dim(\gfrak)^2\log\dim(\gfrak))$ such that
\[
    \diam(\gfrak, S) \leq \left(\log\abs{\gfrak}\right)^C
\]
for any symmetric generating set $S$ of $\gfrak$. For the case of $\slfrak_2(\F_p)$, this gives a bound of the form $(\log p)^{O(1)}$. We will now improve this bound to linear in $\log p$ for the case when the generating set is chosen uniformly at random.

\begin{theorem}
    \label[theorem]{thm:diameter}
    There is a (possibly empty) small set\footnote{A \emph{small set} is a set of positive integers whose sum of reciprocals converges.} of primes $T$ with the following property. For any $\epsilon > 0$ there is a $C$ such that for all primes $p > C$ not in $T$, we have
    \[
        \Pr_{A, B}\left(\diam\left(\slfrak_2(\F_p), S\right) \leq 120\log p + 8\right) \geq 1 - \epsilon,
    \]
    where $S = \{0, \pm A, \pm B\}$ for uniformly random $A, B \in \slfrak_2(\F_p)$.
\end{theorem}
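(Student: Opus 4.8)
The plan is to plant a classical affine random walk on $\F_p$ (or on $\F_{p^2}$) inside the weighted balls $S^k$ and then quote the fact that such a walk covers its ground field in $O(\log p)$ steps. The device is a \emph{twisted} bracket step $X\mapsto[A,X]+w$ with $w\in\{0,\pm B\}$: such a step raises the weight only by a bounded amount, because $X\in S^k$ gives $[A,X]\in[S^1,S^k]\subseteq S^{k+1}$ and hence $[A,X]+w\in S^{k+1}+S^1\subseteq S^{k+2}$. Iterating from $X_0=0$, the element produced after $n$ twisted steps lies in $S^{2n+1}$. So it suffices to show that for generic $A,B$ these reachable elements fill a complement of $\ker\ad_A$ after $n=O(\log p)$ steps; a single extra bracket then recovers the remaining direction.

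Set-up. With probability $1-O(1/p)$ the matrix $A$ is regular semisimple, and then $\slfrak_2(\F_p)=\F_p A\oplus W$, where $W=(\F_p A)^\perp$ is the $\ad_A$-invariant complement of $\ker\ad_A=\F_p A$. As a module over the commutative $\F_p$-algebra $k_A=\F_p[\ad_A|_W]$ the space $W$ is free of rank one; here $k_A$ is $\F_p\times\F_p$ if $A$ splits over $\F_p$ and $\F_{p^2}$ otherwise, and $\ad_A$ acts on $W$ as multiplication by the element $\theta\in k_A$ carrying the nonzero eigenvalues $\pm2\alpha$ of $\ad_A$. Computing $\ad_A$-eigenvalues and using nondegeneracy of the Killing form on $W$ gives $[W,W]=\F_p A$. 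From $B$ I need that its $W$-component $B_W$ is a unit of $k_A$ (fails with probability $O(1/p)$), and from $A$ that $\theta$ has large multiplicative order in $k_A^\ast$; the latter holds with probability $1-o(1)$ by counting elements of small order — in $\F_p^\ast$ in the split case, and in the fixed coset of $\F_p^\ast$ inside $\F_{p^2}^\ast$ cut out by $\det A$ in the non-split case — up to a small exceptional set $T$ of primes coming from these number-theoretic inputs.

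The embedded walk. On the good event run $X_{n+1}=[A,X_n]+w_n$ from $X_0=0$ with $w_n\in\{0,\pm B\}$ and a trailing zero increment. Projecting onto $W\cong k_A$ turns the recursion into $x\mapsto\theta x+\eta\beta$ with $\beta=B_W\in k_A^\ast$ and $\eta\in\{-1,0,1\}$, so the $W$-component after $n$ steps runs over $\beta\cdot\{\sum_{j<n}\eta_j\theta^j:\eta_j\in\{-1,0,1\}\}$. In the non-split case this is an affine walk directly on $\F_{p^2}$; in the split case it is a pair of one-dimensional affine walks that share their increment signs, but an even/odd decoupling (the sum and difference of the two coordinates depend on disjoint halves of the $\eta_j$'s) splits it into two genuinely independent one-dimensional affine walks with multiplier $\theta^2=4\alpha^2$. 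In all cases the covering estimate for the affine random walk on $\F_p$ — the ``well-known affine random walk'' of the abstract — shows that once $\theta$ has large order and $p\notin T$, these reachable sets already equal $W$ for some $n\le c_0\log p$. Therefore $W\subseteq S^{2n+1}$, then $\F_p A=[W,W]\subseteq[S^{2n+1},S^{2n+1}]\subseteq S^{4n+2}$, and finally $\slfrak_2(\F_p)=W+\F_p A\subseteq S^{2n+1}+S^{4n+2}\subseteq S^{6n+3}$. Optimizing $n$ against the covering estimate and being careful with the additive constants gives $\diam(\slfrak_2(\F_p),S)\le120\log p+8$ on the good event; and since $\{A\text{ split}\}$ and $\{A\text{ non-split}\}$ partition the regular semisimple locus with conditional success probability at least $1-\epsilon$ in each, the unconditional probability is at least $1-\epsilon$ for $p$ large and not in $T$.

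Main obstacle. The Lie-algebraic part — the weight bookkeeping, the splitting $\slfrak_2=\F_p A\oplus W$ with $[W,W]=\F_p A$, and the reduction of the twisted walk to an affine walk on $k_A$ — is routine. The work lies in the two external inputs: first, having the covering estimate for the affine random walk on $\F_p$ and $\F_{p^2}$ in the precise form needed, namely covering time $O(\log p)$ uniformly over all multipliers of sufficiently large order, with only a small exceptional set of primes; and second, the number theory showing that a generic $A$ actually delivers such a $\theta$ — trivial in the split case, and in the non-split case amounting to the statement that a uniform element of a fixed coset of $\F_p^\ast$ in $\F_{p^2}^\ast$ has multiplicative order above the required threshold with probability tending to $1$ (away from $T$), which follows from small-order/divisor counting. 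Pinning down the constant $120\log p+8$ is then a matter of tightening $n$ and the order in which the sums and brackets are applied.
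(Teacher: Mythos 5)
Your approach is genuinely different from the paper's, and it has a real gap.

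The paper does not decompose $\slfrak_2$ into $\ad_A$-eigenspaces at all. Instead it works entirely inside the line $\F_p[A,B]$: using the identity $[B,[A,y[A,B]]]=\alpha\,y[A,B]$ with $\alpha=2\langle A,B\rangle$ (a consequence of \Cref{lemma:ad^2}), together with the freely available additive steps $\pm[A,B]$, it shows by induction that $P_n(\alpha)\cdot[A,B]\subseteq S^{4n+2}$, where $P_n(\alpha)$ is the set of signed base-$\alpha$ polynomial values. The multiplier of the embedded affine walk is therefore $\alpha=2\langle A,B\rangle\in\F_p$, not an eigenvalue of $\ad_A$, and the walk runs on $\F_p$ itself. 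This means the cited result (\Cref{breuillard varju field diameter}, i.e.\ Breuillard--Varj\'u) applies verbatim: it is stated exactly for $P_n(a)=\F_p$ with $a$ ranging over $\F_p$, with a small exceptional set of primes. All that is then needed is \Cref{lemma:good_alpha}, showing the pushforward of the uniform measure on $\slfrak_2(\F_p)^2$ under $(A,B)\mapsto\alpha$ is close to uniform on $\F_p$ (via the Gram-fiber count \Cref{prop:gram_matrix_fibers}). Once $\F_p[A,B]\subseteq S^{4n+2}$, the remaining two directions are recovered with a single Gram-matrix inversion and two more brackets via \Cref{lemma:ad^2}, giving $S^{12n+8}=\slfrak_2(\F_p)$. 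No case split on whether $A$ is split or non-split occurs, and the only external input is the $\F_p$ covering theorem.

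The gap in your proposal is the non-split branch. You need the affine-walk covering estimate over $\F_{p^2}$ (for a multiplier $\theta$ in a nontrivial coset of $\F_p^\ast$ in $\F_{p^2}^\ast$), but the result you are implicitly invoking is the Breuillard--Varj\'u theorem for $\F_p$, which is the only one available in the paper's toolkit. It is also formulated for all but $\epsilon p$ values of the multiplier $a\in\F_p$, not as ``large multiplicative order of $a$ implies $P_n(a)=\F_p$ for $n=O(\log p)$'', so even your split-case reduction (after the even/odd decoupling) would need a reformulation you haven't supplied. Finally, the number-theoretic claim that a uniformly random element of a fixed $\F_p^\ast$-coset in $\F_{p^2}^\ast$ has large multiplicative order outside a small set of primes is plausible but not established; it is precisely the kind of extra input the paper avoids by choosing the multiplier $\alpha=2\langle A,B\rangle\in\F_p$, whose distribution is read off directly from \Cref{prop:gram_matrix_fibers}. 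If you switch your embedded walk to use $\alpha$ as the multiplier (via the bracket identity above) rather than the eigenvalue of $\ad_A$, the split/non-split dichotomy disappears and your argument collapses to the paper's.
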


The situation is very similar to what happens in groups: the diameter of $\SL_2(\F_p)$ is $(\log p)^{O(1)}$ for any generating set, and it is $O(\log p)$ for a random generating set \cite[Corollary 6.5]{helfgott2008growth}.

\subsection*{Reader's guide}

The ideas behind these results can be outlined as follows. First, we analyze the initial steps of the random walk and observe that odd and even steps exhibit distinct behaviors. This observation allows us to reduce the problem to a random walk on the cyclic group $\F_p^\ast$ of order $n = p - 1$ (see \Cref{sec:translating_to_cyclic_group}). The key parameters of this random walk are the coefficients of the Gram matrix associated with the Hilbert–Schmidt inner product on $\slfrak_2(\F_p)$. We then study the distribution of these parameters and demonstrate that they are nearly uniform (see \Cref{sec:gram_matrix_fibers}). As a result, the problem reduces to analyzing a random walk on the cyclic group $\Z_n$ with almost uniformly random generators, which can be approached using Fourier analysis on cyclic groups (see \Cref{sec:overview_of_fourier_analysis}). We identify the characters that contribute to the limiting distribution $\nu_{2k}$ and relate the probability of converging to the uniform distribution to a $\zeta(2)$–type condition concerning the prime divisors of $n = p-1$ (see \Cref{sec:probability_of_convergence_to_uniform}). After that, we establish the pre-cutoff\footnote{The term \emph{cutoff} describes the stronger property that $t_{\text{mix}}(\epsilon)/t_{\text{mix}}(1 - \epsilon) \to 1$ for any $0 < \epsilon < 1/2$, where $t_{\text{mix}}(x) = \min \{ k \in \N \mid \dist(\mu_{2k}, \nu_{2k}) < x \}$. This phenomenon cannot occur on a cyclic group with a bounded number of generators by \cite{nocutoff}.} phenomenon for the random Lie bracket (see \Cref{sec:precutoff}). Upper bounds on the total variation distance in the generic case were first obtained in \cite{hildebrand1994random}, and lower bounds were established in \cite{hildebrand2005survey} (see Theorem 2 and the exercises following it). Here, we provide simpler proofs tailored to our specific requirements. For the upper bound, we adapt recent elementary methods based on Fourier analysis from \cite{nocutoff}. These techniques alone yield only a $1/2-\epsilon$ lower bound, as we discuss. We therefore streamline the probabilistic argument from \cite{hildebrand2005survey} in our context to obtain the stronger $1 - \epsilon$ bound. Lastly, we discuss the linear bound of $O(\log p)$ on the diameter with respect to generic generators (see \Cref{diameter section}). Our result here relies on embedding a variant of the well-known Chung-Diaconis-Graham affine random walk on $\F_p$ \cite{chung1987random} into the random Lie bracket, which allows us to leverage the results of \cite{breuillard2022cut} on covering times of such walks.

\subsection*{Acknowledgments}

We thank Daniele Dona for suggesting the use of the results in \cite{breuillard2022cut} to establish diameter bounds. We also thank Martin Hildebrand for drawing our attention to the total variation distance bounds for random walks on cycles in \cite{hildebrand2005survey}. Finally, we thank the anonymous referee for their many helpful comments and suggestions.


\section{Translating the walk to a cyclic group}
\label[section]{sec:translating_to_cyclic_group}

\subsection{Inner product and adjoint operators}

The vector space $\slfrak_2(\F_p)$ comes equipped with the inner product $\langle A, B \rangle = \tr(A B)$. Note that $\langle A, A \rangle = \tr(A^2)$, and since $A^2 + \det(A) = 0$ for a matrix in $\slfrak_2(\F_p)$, we have $\langle A, A \rangle = - 2 \det(A)$.\footnote{Throughout the paper we assume $p > 2$.}


\begin{lemma}[compare with Proposition 6.2 in \cite{cantor2025two}]
\label[lemma]{lemma:ad^2}
For any two elements $A, B \in \slfrak_2(\F_p)$, we have
\[
[A, B, B] = 2 \langle B, B \rangle A - 2 \langle A, B \rangle B.
\]
\end{lemma}
\begin{proof}
The characteristic polynomial of $A$ is $A^2 + \det A = 0$, hence $A^3 - \frac12 \langle A, A \rangle A = 0$. Multilinearize this identity by considering it in the Lie algebra $\slfrak_2(\F_p) \otimes_{\F_p} \F_p[T]/(T^2)$ and using it with the element $TA + B$. Inspecting the $T$-term, we obtain
\[
AB^2 + BAB + B^2 A - \frac12 ( 2 \langle A, B \rangle B + \langle B, B \rangle A ) = 0.
\]
Taking into account that $B^2 = \frac12 \langle B, B \rangle$, we get
\[
\langle B, B \rangle A - 2 \langle A, B \rangle B + 2 BAB = 0.
\]
The last term of the sum is $2BAB = 2B[A, B] + 2 B^2 A = 2 B[A, B] + \langle B, B \rangle A$, hence
\begin{equation}
    \label{eq:ad^2_proof}
    2 \langle B, B \rangle A - 2 \langle A, B \rangle B = - 2 B [A, B].
\end{equation}
Finally, we have $B [A, B] = BAB - B^2 A = BAB - A B^2 = [B, A] B$, therefore
\[
[A, B, B]
= [A, B] B - B [A, B]
= - 2 B [A, B],
\]
which is the right-hand side of \eqref{eq:ad^2_proof}.
\end{proof}

We clearly have $\ad_A A = 0$, $\ad_A B = [A, B]$, and it follows from \Cref{lemma:ad^2} that $\ad_A [A, B] = - 2 \langle A, B \rangle A + 2 \langle A, A \rangle B$. Symmetrically, we have $\ad_B A = - [A, B]$, $\ad_B B = 0$, and $\ad_B [A, B] = - 2 \langle B, B \rangle A + 2 \langle A, B \rangle B$. Thus, if $A, B$ is a generating pair of the Lie algebra $\slfrak_2(\F_p)$, the matrices $A, B, [A, B]$ generate it as a vector space. In this basis, the adjoint operators $\ad_A$ and $\ad_B$ can be represented by the matrices
\[
    \ad_A = 
    \begin{pmatrix}
        0 & 0 & -2 \langle A, B \rangle \\
        0 & 0 & 2 \langle A, A \rangle \\
        0 & 1 & 0
    \end{pmatrix},
    \qquad
    \ad_B = 
    \begin{pmatrix}
        0 & 0 & -2 \langle B, B \rangle \\
        0 & 0 & 2 \langle A, B \rangle \\
        -1 & 0 & 0
    \end{pmatrix}.
\]
To simplify notation we shall write $\alpha = 2 \langle A, B \rangle$, $\beta = 2 \langle A, A \rangle$ and $\gamma = 2 \langle B, B \rangle$.

\subsection{Odd/even number of steps}

The random walk $(X_k)_{k \in \mathbb{N}_0}$ begins at $X_0 = [A, B]$. After the first step, $X_1$ is uniformly distributed among the elements $\ad_A X_0 = -\alpha A + \beta B$ and $\ad_B X_0 = -\gamma A + \alpha B$. At the second step, $X_2$ is uniformly distributed among $\beta [A, B]$, $\alpha [A, B]$, $\alpha [A, B]$, and $\gamma [A, B]$. 

By induction, we observe that for all even indices, $X_{2k} \in \lin\{[A, B]\}$, where $\lin\{[A, B]\}$ denotes the line spanned by $[A, B]$. Similarly, for all odd indices, $X_{2k+1} \in \lin\{-\alpha A + \beta B\} \cup \lin\{-\gamma A + \alpha B\}$. Given the distribution of $X_{2k}$ on the line $\lin\{[A, B]\}$, it follows that $X_{2k+1}$ is distributed proportionally (scaled by a factor of $1/2$) across the lines $\lin\{-\alpha A + \beta B\}$ and $\lin\{-\gamma A + \alpha B\}$. 

Thus, to understand the behavior of our random walk, it suffices to focus on the distribution after an even number of steps. This allows us to reduce the problem to analyzing how the distribution evolves along the line $\lin\{[A, B]\}$.

The sequence $(X_{2k})_{k \in \N}$ is equivalent to a random walk on $\F_p$, where we start at the element $1 \in \F_p$ and, at each step, multiply by $\alpha$ with probability $1/2$, or by $\beta$ or $\gamma$ each with probability $1/4$. Provided that $\alpha$, $\beta$, and $\gamma$ are nonzero, this random walk occurs on the group $\F_p^\ast$. By fixing an isomorphism $\log \colon \F_p^\ast \to \Z_n$ where $n = p - 1$, we can translate the problem to a random walk on the cyclic group $\Z_n$.

\section{Distribution of the Gram matrix}
\label[section]{sec:gram_matrix_fibers}

In order to analyze our random walk, we need to determine how the parameters $\alpha = 2\langle A, B \rangle$, $\beta = 2 \langle A, A \rangle$, and $\gamma = 2 \langle B, B \rangle$ are distributed in $\F_p$ as $A,B$ vary across $\slfrak_2(\F_p)$. These parameters are essentially coefficients of the Gram matrix of the inner product on $\slfrak_2(\F_p)$:
\[
    G_{A,B} = 
    \begin{pmatrix}
        \langle A, A \rangle & \langle A, B \rangle \\
        \langle A, B \rangle & \langle B, B \rangle
    \end{pmatrix}
    = \frac12 
    \begin{pmatrix}
        \beta & \alpha \\
        \alpha & \gamma
    \end{pmatrix}.
\]
We thus consider the Gram map to the set of all symmetric matrices,
\[
    G \colon \slfrak_2(\F_p)^2 \to \Sym_2(\F_p), 
    \quad (A, B) \mapsto G_{A,B}.
\]
The orthogonal group $O(3)$ over $\F_p$ acts naturally on the space $\slfrak_2(\F_p)$ (see \cite{clark2013quadratic}), with Witt decompositon $\langle e, f \rangle \oplus \langle h \rangle$, where $e = E_{12}$, $f = E_{21}$, and $h = E_{11} - E_{22}$. The discriminant of the form is $-2$. This action induces an action on pairs $(A, B) \in \slfrak_2(\F_p)^2$. The Gram map $G$ is invariant under this action. 

\subsection{Fibers of the Gram map}

\begin{lemma}
The number of elements $A \in \slfrak_2(\F_p)$ with $\langle A, A \rangle = a$ is equal to $p(p + \eta(a/2))$, where $\eta$ is the quadratic character on $\F_p$.
\end{lemma}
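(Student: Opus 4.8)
The plan is to count points on the affine quadric $\langle A, A\rangle = a$ inside $\slfrak_2(\F_p)$. Writing $A = \begin{pmatrix} x & y \\ z & -x \end{pmatrix}$, we have $\langle A, A\rangle = \tr(A^2) = 2x^2 + 2yz$, so the condition becomes $2x^2 + 2yz = a$, i.e. $x^2 + yz = a/2$. Thus I need the number of solutions $(x,y,z) \in \F_p^3$ to the ternary quadratic form $Q(x,y,z) = x^2 + yz$ taking the value $a/2$. This is a classical computation: the form $x^2 + yz$ is nondegenerate in three variables, and its discriminant determines the answer via the standard formula for the number of representations of a nonzero value by a nondegenerate quadratic form over $\F_p$.

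The key steps, in order: (1) reduce to counting solutions of $x^2 + yz = b$ with $b = a/2$; (2) for fixed $x$, count solutions of $yz = b - x^2$ — there are $p-1$ solutions when $b - x^2 \neq 0$ and $2p - 1$ solutions when $b - x^2 = 0$; (3) sum over $x \in \F_p$. The number of $x$ with $x^2 = b$ is $1 + \eta(b)$ (interpreting $\eta(0) = 0$), so the count is
\[
    (1 + \eta(b))(2p - 1) + (p - (1 + \eta(b)))(p - 1)
    = p^2 + p\,\eta(b)
    = p\bigl(p + \eta(a/2)\bigr),
\]
using $\eta(b) = \eta(a/2)$. This matches the claimed formula. (An alternative, perhaps cleaner for the paper's exposition, is to invoke the $O(3)$-action mentioned above together with the Witt decomposition $\langle e, f\rangle \oplus \langle h\rangle$, under which the quadratic form is literally $2(x^2 + yz)$ in the coordinates dual to $h, e, f$; then cite the known orbit sizes of the orthogonal group on the quadric.)

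I do not expect a genuine obstacle here — this is a routine Gauss-sum / quadric-counting argument. The only points requiring minor care are the edge case $b = 0$ (where the fiber $yz = 0$ is larger) and keeping track of the factor of $2$ relating $\langle A, A\rangle$ to $x^2 + yz$, which is where the argument of $\eta$ becomes $a/2$ rather than $a$; since $p > 2$, division by $2$ is harmless and $\eta(2)$ does not intervene because we are evaluating $\eta$ at $b - x^2$ summed against the multiplicativity only through $\eta(b)$. If one prefers to avoid the case split, one can instead use the exponential-sum identity $\#\{Q = b\} = p^2 + \frac{1}{p}\sum_{t \in \F_p^\ast} \left(\sum_{u}\psi(t u^2)\right)\left(\sum_{v,w}\psi(t v w)\right)\psi(-tb)$ for an additive character $\psi$, evaluate the Gauss sum $\sum_u \psi(tu^2) = \eta(t)\,g$ and the bilinear sum $\sum_{v,w}\psi(tvw) = p$, and collect terms; this yields the same closed form directly.
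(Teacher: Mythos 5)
Your proof is correct and follows essentially the same direct point-count on the affine quadric $\tr(A^2) = a$. The paper parametrizes $A = xe + yf + zh$ via the Witt basis and fixes the coefficient of the isotropic vector $f$, so that for $y \neq 0$ the equation becomes linear in $x$ and only the $y=0$ fiber needs $\eta$; fixing the diagonal entry as you do reaches the same count after a slightly longer case analysis on $yz = b - x^2$.
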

\begin{proof}
Write $A = x e + y f + z h$ with $x,y,z \in \F_p$. The condition $\langle A, A \rangle = a$ is equivalent to $2xy + 2z^2 = a$. This is an affine quadric. If $y \neq 0$, we can uniquely solve for $x$, giving $(p-1)p$ solutions. If $y = 0$, we have $z^2 = a/2$, which has $1 + \eta(a/2)$ solutions for $z$, and $x$ is arbitrary, giving $p(1+\eta(a/2))$ solutions.
\end{proof}

\begin{proposition}
\label[proposition]{prop:gram_matrix_fibers}
Let $X \in \Sym_2(\F_p)$. Then
\[
|G^{-1}(X)|
=
\begin{cases}
    p^3 - p & \rank X = 2 \\
    2p^3 + p^2 - p & \rank X = 1, \ \eta(X_{11}/2), \eta(X_{22}/2) \geq 0 \\
    p^2 - p & \rank X = 1, \ \eta(X_{11}/2) = -1 \text{ or } \eta(X_{22}/2) = -1 \\
    p^3 + p^2 - p & \rank X = 0.
\end{cases}
\]
\end{proposition}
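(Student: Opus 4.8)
The plan is to parametrize $A = x_1 e + y_1 f + z_1 h$ and $B = x_2 e + y_2 f + z_2 h$, so that the three entries of $G_{A,B}$ become the quadratic forms
\[
\langle A,A\rangle = 2x_1y_1 + 2z_1^2,\quad \langle B,B\rangle = 2x_2y_2 + 2z_2^2,\quad \langle A,B\rangle = x_1y_2 + x_2y_1 + 2z_1z_2,
\]
and then count the number of $(x_i,y_i,z_i)$ hitting a prescribed symmetric matrix $X$. Since the Gram map is invariant under the $O(3)$-action on $\slfrak_2(\F_p)^2$, and $O(3)$ acts transitively on each congruence class of the form $\langle e,f\rangle\oplus\langle h\rangle$, it suffices to compute $|G^{-1}(X)|$ for one representative $X$ in each orbit under the (outer) action on $\Sym_2(\F_p)$ coming from change of basis $(A,B)\mapsto (A,B)M$, $M\in\GL_2(\F_p)$, which multiplies $G$ by $M^\top X M$. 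This reduces the rank $2$ case to, say, $X=\mathrm{diag}(d_1,d_2)$, the rank $1$ case to $X=\mathrm{diag}(d,0)$, and the rank $0$ case to $X=0$; one must track the value of $\eta$ on the relevant diagonal entries, which is the source of the case split in the statement.

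For the rank $0$ case $X = 0$: I would first count pairs with $\langle A,A\rangle=0$; by the preceding lemma there are $p(p+\eta(0)) = p^2$ such $A$ (since $\eta(0)=0$), so $A$ ranges over the cone of isotropic vectors, of which there are $p^2$. For each fixed isotropic $A\neq 0$, the conditions $\langle B,B\rangle=0$ and $\langle A,B\rangle=0$ cut out: $B$ lies in the isotropic cone and in the hyperplane $A^\perp$. Since $A$ is isotropic, $A\in A^\perp$, and $A^\perp$ is a degenerate plane whose isotropic vectors form the line $\lin\{A\}$ together with... here one counts isotropic vectors in a $2$-dimensional degenerate subspace, getting $p$ of them. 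For $A=0$ one needs $B$ isotropic, giving $p^2$. Summing, $|G^{-1}(0)| = (p^2-1)\cdot p + 1\cdot p^2 = p^3 - p + p^2$, matching the claim. The rank $2$ case $X=\mathrm{diag}(d_1,d_2)$ with $d_1,d_2\neq 0$: the lemma gives $p(p+\eta(d_1/2))$ choices for $A$; then for fixed $A$ with $\langle A,A\rangle = d_1\neq 0$ (so $A$ anisotropic), the two conditions $\langle A,B\rangle=0$, $\langle B,B\rangle=d_2$ restrict $B$ to an affine quadric inside the nondegenerate plane $A^\perp$, whose point count depends on $\eta(d_2/\text{disc}(A^\perp)) = \eta(d_2/2 \cdot \text{something involving } d_1)$; the quadratic-character factors are designed to cancel so that the product is independent of $d_1,d_2$ and equals $p^3-p$. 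The rank $1$ case $X=\mathrm{diag}(d,0)$ is the delicate one: now $B$ must be isotropic and orthogonal to an anisotropic $A$, i.e.\ $B$ lies on the isotropic cone of the nondegenerate plane $A^\perp$; this cone has either $2p-1$ points (if $A^\perp$ is hyperbolic, i.e.\ $\eta(-\det A^\perp)=1$) or just $1$ point (if anisotropic), and whether $A^\perp$ is hyperbolic is governed by $\eta(d/2)$, producing the split between $2p^3+p^2-p$, $p^2-p$.

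The main obstacle I anticipate is the bookkeeping of quadratic characters: one must correctly compute the discriminant of the restricted form on $A^\perp$ in terms of $\langle A,A\rangle$ and the ambient discriminant of $\slfrak_2(\F_p)$ (which is $\langle e,f\rangle\oplus\langle h\rangle$, discriminant $-2\cdot(-1) = $ a fixed nonzero value up to squares, but one should pin it down), and then verify that the $\eta$-dependence genuinely cancels in the rank $2$ case and genuinely produces the stated dichotomy in the rank $1$ case. A clean way to organize this is to note $|G^{-1}(X)|$ only depends on the $\GL_2(\F_p)$-congruence class of $X$ — so only on $\rank X$ and, when $\rank X = 1$, on $\eta$ of the nonzero diagonal entry (equivalently, on whether $X$ is congruent to $\mathrm{diag}(1,0)$ or $\mathrm{diag}(\text{nonresidue},0)$) — and to do the three representative computations via the "fix $A$, then count $B$ on a quadric in $A^\perp$" strategy, invoking standard formulas for the number of points on affine conics over $\F_p$. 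Summing over the two possible values of $\langle A,A\rangle$ weighted by $p(p+\eta(\cdot))$, with careful attention to the anisotropic-vs-isotropic split of $A$ itself, should then give each of the four numbers.
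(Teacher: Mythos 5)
Your proposal is correct and takes a genuinely different route from the paper. The paper proves the proposition by an orbit--stabilizer argument: each fiber $G^{-1}(X)$ is a union of orbits of the orthogonal group $O(3)$ acting on $\slfrak_2(\F_p)^2$, Witt's extension lemma is used to show that the fiber over a rank-$2$ matrix is a single orbit, and the remaining cases are handled by enumerating orbits and computing stabilizer sizes. You instead first observe that $|G^{-1}(X)|$ depends only on the $\GL_2(\F_p)$-congruence class of $X$ (via $(A,B)\mapsto (A,B)M$), reduce to diagonal representatives, and then do a direct ``fix $A$, count $B$'' enumeration using point counts on affine conics and isotropic cones inside $A^\perp$. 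Both routes require essentially the same discriminant bookkeeping: you must track the discriminant of $A^\perp$ against the ambient discriminant $-2$ of $\slfrak_2(\F_p)$, and the character $\eta$ must cancel in the rank-$2$ case while governing the hyperbolic/anisotropic dichotomy of $A^\perp$ in the rank-$1$ case. Your rank-$0$ computation is carried out correctly, and the arithmetic for the sketched rank-$1$ and rank-$2$ cases does check out: for rank $2$ one gets $p\bigl(p+\eta(d_1/2)\bigr)\bigl(p-\eta(d_1/2)\bigr)=p^3-p$, and for rank $1$ one gets $p\bigl(p+\eta(d/2)\bigr)(2p-1)=2p^3+p^2-p$ or $p(p-1)\cdot 1=p^2-p$ depending on whether $A^\perp$ is hyperbolic, i.e.\ on $\eta(d/2)$. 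The paper's Witt-lemma argument is arguably cleaner because the rank-$2$ case is a single transitive orbit with stabilizer of order $2$ and no character computation is needed at all; your approach is more elementary, avoids Witt's extension lemma, and reduces everything to standard conic counts over $\F_p$, at the cost of explicitly verifying the $\eta$-cancellation. Either argument completes the proof once the conic counts are written out. One small remark on the statement itself: the condition ``$\eta(X_{11}/2)=\eta(X_{22}/2)\geq 0$'' in the rank-$1$ case should be read as identifying the $\GL_2$-congruence class (equivalently, the sign $\eta(t)$ in the paper's parametrization $X=2t\left(\begin{smallmatrix}x^2&xy\\xy&y^2\end{smallmatrix}\right)$); if exactly one diagonal entry of $X$ vanishes the two $\eta$-values are literally unequal, but your reduction to diagonal $\GL_2$-representatives sidesteps this issue and correctly identifies $\eta(t)$ as the governing invariant.
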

\begin{proof}
Each fiber of $G$ is a disjoint union of orbits under the action of $O(3)$. The size of the orthogonal group is $|O(3)| = 2 p (p^2 - 1)$.

Suppose first that $\rank X = 2$.
Each pair $(A,B) \in \slfrak_2(\F_p)^2$ with $G_{A,B} = X$ determines a nondegenerate quadratic subspace $\langle A, B \rangle$ of $\slfrak_2(\F_p)$. For any other pair $(A', B')$ with $G_{A',B'} = X$, we have an isometry mapping $A \mapsto A', B \to B'$. By Witt's extension lemma, this isometry can be extended to an action of $O(3)$ on $\slfrak_2(\F_p)$ that maps $(A, B)$ to $(A', B')$. Thus the fiber of $G$ over $X$ is a single orbit of $O(3)$. The stabilizer $\Stab_{O(3)}(A,B)$ fixes $\langle A,B \rangle$ pointwise, hence it also preserves the line $\langle A, B \rangle^\perp$, on which it can only act as $\pm \id$. Hence $|\Stab_{O(3)}(A,B)| = 2$ and so the orbit of $O(3)$ on $(A,B)$ is of size $p^3 - p$.

Assume now that $\rank X = 0$.
Suppose $(A,B)$ is a pair with $G_{A,B} = 0$. Since the Witt index of the quadratic space $\slfrak_2(\F_p)$ is $1$, we must have $\dim \langle A, B \rangle < 2$. Assume $(A,B) \neq (0,0)$, which forms a single orbit of size $1$. Thus $(A,B) = (\lambda Z, \mu Z)$ for some $(\lambda, \mu) \in \F_p^2 \setminus \{ 0 \}$ and $0 \neq Z \in \slfrak_2(\F_p)$ with $\langle Z, Z \rangle = 0$. By Witt extension lemma, this pair maps to a pair with $Z = e$ under $O(3)$. Two such pairs can further be mapped to each other by an $O(3)$ map if and only if their $(\lambda, \mu)$ vectors are dependent. Thus a set of orbit representatives is parameterized by vectors in $\P^1$, so there are $p+1$ orbits in total. Each such orbit has size $|O(3).e|$, which is the number of nonzero isotropic vectors in $\slfrak_2(\F_p)$. There are a total of $p^2 - 1$ of these by the previous lemma. In total, we obtain $1 + (p+1)(p^2 - 1) = p^3 + p^2 - p$ pairs in the fiber over $0$.

Consider finally the option when $\rank X = 1$. Suppose $(A,B)$ is a pair with $G_{A,B} = X$.
\begin{description}
    \item[$\dim \langle A, B \rangle = 1$]
    This means $(A,B) = (xZ, yZ)$ for some $Z \in \slfrak_2(\F_p)$ and $x,y$ not both $0$. The Gram matrix of this pair is
    \[
        X = \langle Z,Z \rangle \begin{pmatrix}
            x^2 & xy \\ xy & y^2
        \end{pmatrix},
    \]
    so $\langle Z, Z \rangle \neq 0$. Let $(C,D)$ be another pair with $G_{C,D} = X$. Thus $(C,D) = (\alpha W, \beta W)$ for some $\alpha, \beta$ not both zero. Comparing the Gram matrices, it follows that $\langle W, W \rangle / \langle Z, Z \rangle$ is a nonzero square $u^2 \in \F_p$. Hence $W$ can be mapped by an element of $O(3)$ to $uZ$, and the pair $(C,D)$ gets mapped to $(\alpha u Z, \beta u Z)$. Again comparing the Gram matrices gives $\alpha u = \pm x$ and $\beta u = \pm y$, where nonequal signs can only occur if $\alpha \beta = 0$. Hence $(C,D) = (xZ, yZ) = (A,B)$ or $(C,D)=(-A,-B)$ when $\alpha \beta \neq 0$, and $(C,D) = (0, \pm yZ) = (0, \pm B)$ or $(\pm xZ, 0) = (\pm A, 0)$ when $\alpha \beta = 0$. In all cases, an extra application of an element of $O(3)$ identifies elements with distinct signs. Therefore $(C,D)$ is in the orbit of $(A,B)$, and hence $(A,B)$ has a single orbit under $O(3)$. The stabilizer of any such point are the transformations in $O(3)$ that fix $Z$. Since $Z$ is anisotropic, we have two options for the complement: either it is split or non-split, depending on whether the discriminant $\Delta(\langle Z \rangle^\perp) = -2/\langle Z, Z \rangle$ is $-1$ modulo squares. In the split case, the stabilizer has size $2(p-1)$, and in the non-split case, it has size $2(p+1)$. Thus the stabilizer is of size $2(p - \eta(\langle Z, Z \rangle / 2))$. The number of pairs in the fiber over $X$ is thus $p(p + \eta(\langle Z, Z \rangle / 2))$. These orbits appear whenever $X$ is of rank $1$.

    \item[$\dim \langle A, B \rangle = 2$] 
    In this case, we can find a nonzero vector $C \in \langle A, B \rangle$ that is orthogonal to $A$ and $B$. Hence $C$ is isotropic and we can map it to $e$ under $O(3)$. Then $A, B \in \langle e \rangle^\perp = \langle e, h \rangle$. Thus $A = x_0 e + x h$ and $B = y_0 e + y h$ with not both $x,y$ zero. Assuming $y \neq 0$, an orthogonal transformation maps this pair into $(e + xh, yh)$. On the other hand, if $y = 0$, we can map it to $(xh, e)$. In both cases, the values of $x,y$ are uniquely determined by $X$ (the option $-x, -y$ is equivalent to $(A,B)$ by the flip along $\langle e, f \rangle$). Hence we again have a unique orbit over any $X$. Since the stabilizer of $(e, f)$ is trivial, the orbit is of size $2p(p^2 - 1)$. These orbits only appear when $\eta(X_{11}/2), \eta(X_{22}/2) \geq 0$ (not both zero).
\end{description}
Summing up the contributions from the two cases completes the proof.
\end{proof}

\begin{corollary}
    \label[corollary]{cor:probability_of_nonzero_entries}
Let $A,B$ be uniformly random in $\slfrak_2(\F_p)$. Then $G_{A,B}$ has all entries nonzero with probability at least $1 - 9/p$.
\end{corollary}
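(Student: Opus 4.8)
The plan is to bound the complementary probability by a union bound. The Gram matrix $G_{A,B}$ fails to have all entries nonzero precisely when at least one of $\langle A,A\rangle$, $\langle B,B\rangle$, $\langle A,B\rangle$ vanishes, so I would estimate each of these three events separately. Two of them are handed to us directly by the lemma preceding \Cref{prop:gram_matrix_fibers}: specializing $a=0$ (with $\eta(0)=0$) shows that there are exactly $p^2$ elements $A\in\slfrak_2(\F_p)$ with $\langle A,A\rangle=0$, so $\Pr_A(\langle A,A\rangle=0)=p^2/p^3=1/p$, and symmetrically $\Pr_B(\langle B,B\rangle=0)=1/p$.

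For the remaining cross term I would condition on $A$. Since $p>2$, the trace form on $\slfrak_2(\F_p)$ is nondegenerate (in the basis $e,f,h$ its matrix has determinant $-2$), so for each $A\neq 0$ the linear functional $B\mapsto\langle A,B\rangle$ is nonzero and hence surjective onto $\F_p$; thus $\Pr_B(\langle A,B\rangle=0\mid A)=1/p$ for every $A\neq 0$, while for the single value $A=0$ the cross term vanishes identically. Averaging over $A$ gives $\Pr_{A,B}(\langle A,B\rangle=0)\leq 1/p+1/p^3$, and the union bound then yields $\Pr_{A,B}(G_{A,B}\text{ has a zero entry})\leq 3/p+1/p^3\leq 9/p$, with room to spare.

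An alternative route, which makes the constant $9$ transparent, is to invoke \Cref{prop:gram_matrix_fibers} directly: every fiber of $G$ has size at most $2p^3+p^2-p\leq 3p^3$ (the edge cases of rank $1$ not displayed there are even smaller), there are at most $3p^2$ symmetric $2\times 2$ matrices with a zero entry, and $3p^2\cdot 3p^3=9p^5$ out of $p^6$ total pairs gives the bound at once. I expect no genuine obstacle here; the only point demanding a moment's care is that the cross term $\langle A,B\rangle=0$ is not an instance of the preceding lemma and must be handled by the short conditioning argument above, together with not overlooking the degenerate pair $A=0$ (or $B=0$), which contributes only $O(p^{-3})$.
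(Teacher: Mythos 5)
Your primary argument is correct and takes a genuinely different route from the paper. The paper reuses the full fiber-size computation in \Cref{prop:gram_matrix_fibers}: every fiber has size at most $3p^3$, there are at most $3p^2$ symmetric matrices with a zero entry, so at most $9p^5$ bad pairs out of $p^6$. (Your ``alternative route'' is precisely this proof.) Your main approach instead applies the union bound at the level of the three entries of $G_{A,B}$: the two diagonal events $\langle A,A\rangle=0$ and $\langle B,B\rangle=0$ have probability exactly $1/p$ by the lemma preceding \Cref{prop:gram_matrix_fibers} (with $a=0$, $\eta(0)=0$), and the off-diagonal event is handled by conditioning on $A$ and observing that for $A\neq 0$ the functional $B\mapsto\langle A,B\rangle$ is a nonzero linear form on a $3$-dimensional space (nondegeneracy of the trace form for $p>2$), hence has probability $1/p$, with the degenerate $A=0$ case contributing only $1/p^3$. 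This is cleaner and gives the sharper bound $3/p+1/p^3\leq 4/p$; it also avoids invoking \Cref{prop:gram_matrix_fibers} entirely, needing only the easier preceding lemma. The paper's choice is natural since \Cref{prop:gram_matrix_fibers} has already been proved for other purposes, but your decomposition is the more elementary one and costs nothing in rigor.
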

\begin{proof}
Each fiber of the Gram map is of size at most $3p^3$. The probability that $G_{A,B}$ has a zero entry can thus be upper bounded by the union bound as
\[
    \frac{1}{p^6} \sum_{X \in \Sym_2(\F_p) \setminus \Sym_2(\F_p^\ast)} \abs{G^{-1}(X)} \leq \frac{3p^3 \cdot 3p^2}{p^6} = \frac{9}{p}. \qedhere
\]
\end{proof}

\subsection{Distribution of parameters governing the random walk}

Let $\mathcal U = G^{-1}(\Sym_2(\F_p^\ast))$ be the set of pairs $(A,B)\in\slfrak_2(\F_p)^2$ whose Gram matrix $G_{A,B}$ has all entries nonzero. For any $(A,B)\in\mathcal U$, the random walk on $\slfrak_2(\F_p)$ after an even number of steps reduces to a random walk on the cyclic group $\Z_{p-1}$ as in \Cref{sec:translating_to_cyclic_group}. In this case, the behavior of the walk is determined by two parameters $a,b$ extracted from $G_{A,B}$ via the map
\[
    P \colon \Sym_2(\F_p^\ast) \to \Z_{p - 1}^2,
    \quad 
    X = \frac12 \begin{pmatrix}
        \beta & \alpha \\
        \alpha & \gamma
    \end{pmatrix}
    \mapsto \left( \log(\alpha/\beta), \log(\alpha/\gamma) \right).
\]
Let $F = P \circ G \colon \mathcal U \to \Z_{p - 1}^2$ be the composite map, so that $(a,b) = F(A,B)$. Let $\pi$ be the pushforward measure of the uniform distribution on $\slfrak_2(\F_p)^2$ under $F$. In other words, $\pi(S) = \abs{F^{-1}(S)}/p^6 = \P_{A,B}(P(G_{A,B}) \in S)$ for any subset $S \subseteq \Z_{p - 1}^2$. Denote the uniform distribution on $\Z_{p - 1}^2$ by $\nu_{\Z_{p - 1}^2}$. As $p$ grows to infinity, the distributions $\pi$ and $\nu_{\Z_{p - 1}^2}$ become close to each other.

\begin{proposition}
    \label[proposition]{distribution of a and b}
    For any subset $S \subseteq \Z_{p - 1}^2$, we have
    \[
        \abs{\pi(S) - \nu_{\Z_{p - 1}^2}(S)} \leq \frac{6}{p}.
    \]
\end{proposition}

To prove this proposition, we first examine the sizes of fibers of the map $F$.

\begin{lemma}
    For any $x = (a, b) \in \Z_{p - 1}^2$ we have 
    \[
        \abs{\abs{F^{-1}(x)} - \frac{p^6}{(p - 1)^2}} \leq 6p^3.
    \]
\end{lemma}
\begin{proof}
    We have $\abs{P^{-1}(x)} = p - 1$. First, suppose that $a + b = 0$. In this case we have $\alpha^2 = \beta\gamma$, so all matrices $X \in P^{-1}(x)$ have rank $1$. Half of these satisfy $\eta(X_{11}/2) = \eta(X_{22}/2) = 1$, and the other half satisfy $\eta(X_{11}/2) = \eta(X_{22}/2) = -1$. It follows from \Cref{prop:gram_matrix_fibers} that
    \[
        \abs{F^{-1}(x)} = \frac{p - 1}{2}(2p^3 + p^2 - p) + \frac{p - 1}{2}(p^2 - p) = p^4 - 2p^3 + p.
    \]
    We compute $0 \leq p^6 - (p^4 - 2p^3 + p)(p - 1)^2 \leq 6p^3(p - 1)^2$. Dividing by $(p - 1)^2$ completes the proof for this case.

    Now, suppose that $a + b \neq 0$. In this scenario, all matrices $X \in P^{-1}(x)$ have rank $2$. Again, by \Cref{prop:gram_matrix_fibers}, the size of the fiber $F^{-1}(x)$ is $(p-1)(p^3 - p) = p^4 - p^3 - p^2 + p$. As in the previous case, this quantity is within $6p^3$ of $p^6/(p - 1)^2$.
\end{proof}

\begin{proof}[Proof of \Cref{distribution of a and b}]
    We have 
    \[
        \abs{\pi(S) - \nu_{\Z_{p - 1}^2}(S)} = \abs{\frac{1}{p^6} \sum_{x \in S} \abs{F^{-1}(x)} - \frac{\abs{S}}{(p - 1)^2}
        } \leq \frac{1}{p^6} \sum_{x \in S} \abs{\abs{F^{-1}(x)} - \frac{p^6}{(p - 1)^2}}.
    \]
    The last sum can be upper bounded by the last lemma as $6p^3\abs{S} / p^6 \leq 6/p$.
\end{proof}

We will need the following corollary multiple times in the rest of the paper.

\begin{corollary}
    \label[corollary]{cor:pushforward_vs_uniform}
    Let $S \subseteq \slfrak_2(\F_p)^2$ and $T \subseteq \Z_{p - 1}^2$ with $S \cap \mathcal U = F^{-1}(T)$. Then 
    \[
        \abs{\frac{\abs{S}}{p^6} - \frac{\abs{T}}{(p - 1)^2}} < 15/p.
    \]
\end{corollary}
\begin{proof}
    Since $F^{-1}(T) \subseteq S \subseteq F^{-1}(T) \cup (\slfrak_2(\F_p)^2 \setminus \mathcal U)$, we have 
    \[
        \abs{\frac{\abs{S}}{p^6} - \frac{\abs{F^{-1}(T)}}{p^6}} \leq \frac{\abs{\slfrak_2(\F_p)^2 \setminus \mathcal U}}{p^6} \leq \frac{9}{p}
    \]
    by \Cref{cor:probability_of_nonzero_entries}. On the other hand, we have 
    \[
        \abs{\frac{\abs{F^{-1}(T)}}{p^6} - \frac{\abs{T}}{(p - 1)^2}} = \abs{\pi(T) - \nu_{\Z_{p - 1}^2}(T)} \leq \frac{6}{p}
    \]
    by \Cref{distribution of a and b}. The result now follows by the triangle inequality.
\end{proof}

We shall apply the above corollary as follows. Suppose that for every $(A,B)\in \mathcal U$, the random walk on $\slfrak_2(\F_p)$ exhibits a property $\mathcal P$ if and only if the corresponding parameters $(a,b) = F(A,B) \in \Z_{p-1}^2$ satisfy a condition $\mathcal C$. Then, for uniformly random $A,B \in \slfrak_2(\F_p)$, the probability that the walk exhibits property $\mathcal P$ differs by at most $15/p$ from the probability that uniformly random $(a,b) \in \Z_{p-1}^2$ satisfy $\mathcal C$.

\section{A brief overview of Fourier analysis on cyclic groups}
\label[section]{sec:overview_of_fourier_analysis}

We give a brief overview of Fourier analysis on cyclic groups, following the exposition in \cite{diaconis1988group}.

\subsection{Setup}

Let $S$ be a subset of a cyclic group $\Z_n$ equipped with a probability measure $p \colon S \to \C$. Let $(Y_k)_{k \in \N_0}$ be the random walk on $\Z_n$ starting at $0$ with transition probabilities given by $p(x,y) = p(y - x)$. Let $\sigma_k$ be the probability distribution of the walk after $k$ steps, i.e., $\sigma_k(x) = \Pr(Y_k = x)$ for $x \in \Z_n$. This distribution is a function in the vector space $\lspace(\Z_n)$ of all complex valued functions on $\Z_n$. 

\subsection{Markov operator}

Let $M \colon \lspace(\Z_n) \to \lspace(\Z_n)$ be the operator defined by
\[
    (Mf)(x) = \sum_{s \in S} p(s)f(x - s).
\]
The distribution $\sigma_k$ can be written as
\[
    \sigma_k(x) = \sum_{s \in S} p(s)\sigma_{k-1}(x - s) = (M\sigma_{k-1})(x),
\]
and so $\sigma_k = M^k \sigma_0 = M^k 1_0$. In order to understand high powers of the Markov operator, we need to analyze its eigenvalues and eigenfunctions. For any character $\chi$ of the group $\Z_n$, we have
\[
    (M\chi)(x) = \left(\sum_{s \in S} p(s)\chi(-s)\right)\chi(x),
\]
so $\chi$ is an eigenfunction of $M$ with eigenvalue $\lambda = \sum_{s \in S} p(s)\chi(-s)$. Characters of $\Z_n$ are of the form $\chi_j(x) = \omega^{jx}$ for $j \in \Z_n$, where $\omega = e^{2\pi i/n}$, and they form a basis of $\lspace(\Z_n)$. 

\subsection{Limiting distribution}

The initial distribution $1_0$ can be expressed in terms of the basis of characters as $1_0 = \frac{1}{n} \sum_{j = 0}^{n - 1}\chi_j$. It follows that the distribution after $k$ steps is given by
\[
    \sigma_k = \frac{1}{n}\sum_{j = 0}^{n - 1}\lambda_j^k\chi_j,
\]
where $\lambda_j = \sum_{s \in S} p(s)\chi_j(-s)$ is the eigenvalue corresponding to $\chi_j$. We can decompose $\sigma_k$ into two components $\sigma_k = \phi_k + \omega_k$, where 
\[
    \phi_k = \frac{1}{n}\sum_{|\lambda_j| = 1} \lambda_j^k\chi_j,
    \qquad
    \omega_k = \frac{1}{n}\sum_{|\lambda_j| < 1} \lambda_j^k\chi_j.
\]
As $k$ grows to infinity, the contribution of $\omega_k$ diminishes to zero, making $\sigma_k$ increasingly close to $\phi_k$.\footnote{Note that $\phi_k$ may not converge as $k \to \infty$, nor is it guaranteed to be uniform.} This proximity can be quantified in terms of the spectral radius $\rho = \max\{|\lambda_j| \mid |\lambda_j| < 1\}$ of the Markov operator. 

\begin{lemma}[Lemma 3 in \cite{nocutoff}]
    \label[lemma]{distance to limiting distribution}
    The distance to the limiting distribution satisfies
    \[
    \rho^{2k} \leq \norm{\sigma_k - \phi_k}_1^2 \leq \sum_{|\lambda_j| < 1}\abs{\lambda_j}^{2k}.
    \]
\end{lemma}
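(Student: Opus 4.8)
The statement to prove is the two-sided bound $\rho^{2k} \leq \norm{\sigma_k - \phi_k}_1^2 \leq \sum_{|\lambda_j| < 1}|\lambda_j|^{2k}$. The natural route is to move through the $\ell^2$ norm, since the characters $\chi_j$ form an orthogonal basis of $\lspace(\Z_n)$ with $\norm{\chi_j}_2^2 = n$, and the decomposition $\sigma_k - \phi_k = \omega_k = \frac1n \sum_{|\lambda_j| < 1} \lambda_j^k \chi_j$ is already diagonal in this basis. First I would record the Plancherel identity in the normalization used here: for $f = \frac1n \sum_j \hat f(j) \chi_j$ one has $\norm{f}_2^2 = \frac1n \sum_j |\hat f(j)|^2$. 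Applying this to $\omega_k$, whose only nonzero Fourier coefficients are $\hat\omega_k(j) = \lambda_j^k$ for those $j$ with $|\lambda_j| < 1$, gives
\[
    \norm{\sigma_k - \phi_k}_2^2 = \frac1n \sum_{|\lambda_j| < 1} |\lambda_j|^{2k}.
\]

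For the upper bound, I would use the standard comparison $\norm{f}_1 \leq \sqrt{n}\,\norm{f}_2$ on $\Z_n$ (Cauchy–Schwarz against the all-ones function), which turns the displayed $\ell^2$ identity into $\norm{\sigma_k - \phi_k}_1^2 \leq n \cdot \frac1n \sum_{|\lambda_j|<1} |\lambda_j|^{2k} = \sum_{|\lambda_j| < 1} |\lambda_j|^{2k}$, exactly as claimed. This direction is routine.

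For the lower bound, I would exploit duality between $\ell^1$ and $\ell^\infty$: $\norm{f}_1 = \sup\{ |\langle f, g\rangle| : \norm{g}_\infty \leq 1 \}$, where I take the bilinear pairing $\langle f, g \rangle = \sum_x f(x) g(x)$. Pick an index $j^\ast$ achieving $|\lambda_{j^\ast}| = \rho$, and test against $g = \overline{\chi_{j^\ast}}$, which has $\norm{g}_\infty = 1$. Since the $\chi_j$ are orthogonal under this pairing with $\langle \chi_j, \overline{\chi_{j'}}\rangle = n\,\delta_{j,j'}$, only the $j^\ast$ term of $\omega_k$ survives, giving $|\langle \sigma_k - \phi_k, \overline{\chi_{j^\ast}}\rangle| = |\lambda_{j^\ast}^k| \cdot n / n = \rho^k$ (the $1/n$ from the expansion of $\omega_k$ cancelling the $n$ from orthogonality). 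Hence $\norm{\sigma_k - \phi_k}_1 \geq \rho^k$, and squaring yields $\rho^{2k} \leq \norm{\sigma_k - \phi_k}_1^2$.

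I do not anticipate a genuine obstacle here; the only things to be careful about are bookkeeping of normalization constants (the $1/n$ in $1_0 = \frac1n\sum_j \chi_j$ versus the $n$ in $\norm{\chi_j}_2^2$) and making sure the set $\{j : |\lambda_j| < 1\}$ is nonempty so that $\rho$ is well-defined — which it is, since $\lambda_0 = 1$ and not all eigenvalues can have modulus $1$ unless the walk is supported on a coset of a proper subgroup, a degenerate case one should either exclude or note makes $\sigma_k = \phi_k$ and the inequality vacuous. Since this is quoted as Lemma 3 of \cite{nocutoff}, I would in practice just cite it, but the self-contained argument above is short enough to include.
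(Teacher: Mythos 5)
Your proof is correct and takes essentially the same route as the paper: Cauchy--Schwarz ($\norm{f}_1 \leq \sqrt{n}\,\norm{f}_2$) together with orthogonality of characters for the upper bound, and the Hölder-type pairing of $\omega_k$ against the extremal character (the paper writes $\abs{\langle \omega_k, \chi\rangle} \leq \norm{\omega_k}_1 \norm{\chi}_\infty$, which is the same test-function argument you phrase as $\ell^1$--$\ell^\infty$ duality) for the lower bound. Your side remark about $\rho$ being undefined when all $\abs{\lambda_j} = 1$ is a reasonable observation the paper leaves implicit; in that degenerate case $\sigma_k = \phi_k$ and the statement is vacuous.
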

\begin{proof}
The characters $\chi_j$ form an orthogonal basis of $\lspace(\Z_n)$ and have norm $\sqrt{n}$. Thus
\[
\norm{\omega_k}_1^2 \leq n\norm{\omega_k}_2^2 = \frac{1}{n}\sum_{|\lambda_j| < 1}\norm{\lambda_j^k\chi_j}_2^2 = \sum_{|\lambda_j| < 1}\abs{\lambda_j}^{2k},
\]
proving the second inequality. For the first inequality, let $\chi$ be a character of $\Z_n$ corresponding to an eigenvalue with absolute value $\rho$. Since $\norm{\chi}_\infty = 1$, we have
\[
\norm{\omega_k}_1 
\geq \abs{\langle\omega_k, \chi\rangle} 
= \frac{1}{n}\rho^k\abs{\langle\chi, \chi\rangle} 
= \rho^k. \qedhere
\]
\end{proof}

\section{The limiting distribution}
\label[section]{sec:probability_of_convergence_to_uniform}

We are now ready to analyze our random Lie bracket in more detail. We have already seen that the distribution of the random walk after an even number of steps is equivalent to a random walk on the cyclic group $\F_p^\ast$ starting at $1$ and evolving by multiplication with $\alpha, \beta, \gamma$ (all nonzero) with probabilities $1/2,1/4,1/4$. 

\subsection{Log}

Fix an isomorphism $\log: \F_p^\ast \to \Z_n$ and let $\alpha' = \log \alpha$, $\beta' = \log \beta$, and $\gamma' = \log \gamma$. Under this mapping, the random walk on $\F_p^\ast$ translates to a random walk on $\Z_n$, starting at $0 = \log 1 \in \Z_n$. At each step, the walk adds $\alpha'$ with probability $1/2$, or $\beta'$ or $\gamma'$ each with probability $1/4$. Let $S' = \{ \alpha', \beta', \gamma' \}$ be the set of steps of the walk with probability measure $p(\alpha') = 1/2$, $p(\beta') = p(\gamma') = 1/4$. The limiting distribution on $\Z_n$ is given by
\[
    \phi_k = \frac1n \sum_{|\lambda_j| = 1} \lambda_j^k \chi_j.
\]

\subsection{Contributing characters}

Let us examine which characters $\chi_j$ appear in the sum $\phi_k$. Certainly the trivial character $\chi_0$ will always contribute to the limiting distribution. The character $\chi_j$ will contribute if and only if

\[
    \abs{\lambda_j} = \abs{\sum_{s \in S'} p(s)\chi_j(-s)} = 1.
\]
As $\abs{\chi_j(-s)} = 1$ for each $s \in S'$, this holds if and only if $\chi_j(-\alpha') = \chi_j(-\beta') = \chi_j(-\gamma')$, which is equivalent to $\chi_j(\alpha' - \beta') = \chi_j(\alpha' - \gamma') = 1$. Introduce parameters $a = \alpha' - \beta'$ and $b = \alpha' - \gamma'$. 
Hence a character $\chi_j$ appears in the limiting distribution if and only if $a,b \in \ker \chi_j$, and the corresponding eigenvalue is $\lambda_j = \overline{\chi_j(\alpha')}$. The random walk is thus essentially determined by the parameters $a,b$.

\begin{lemma}
    \label[lemma]{equivalent characters}
    $\ker\chi_j = (n/\gcd(n, j))\Z_n$.
\end{lemma}
\begin{proof}
    An element $x \in \Z_n$ satisfies $x \in \ker\chi_j$ if and only if $n$ divides $jx$. We can write $j = dj'$ and $n = dn'$, where $d = \gcd(j, n)$. By canceling $d$, we see that this is equivalent to $n'$ dividing $j'x$ and hence to $n'$ dividing $x$. Therefore $\ker\chi_j = \frac{n}{d}\Z_n$ and $\abs{\ker\chi_j} = d$.
\end{proof}

\begin{lemma}
    \label[lemma]{lemma:contributing_characters}
    Let $a = \alpha' - \beta'$ and $b = \alpha' - \gamma'$. Then the characters appearing in $\phi_k$ are precisely those $\chi_j$ for which $j$ is divisible by $n/\gcd(a,b,n)$. The number of contributing characters is thus $\gcd(a,b,n)$.
    \end{lemma}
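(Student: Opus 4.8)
The plan is to combine the characterization of contributing characters established just before the statement with the description of $\ker \chi_j$ from \Cref{equivalent characters}, and then reduce everything to an elementary divisibility computation in $\Z_n$. We have already observed that $\chi_j$ appears in $\phi_k$ precisely when $a, b \in \ker \chi_j$. By \Cref{equivalent characters}, setting $d = \gcd(n, j)$, this kernel equals $(n/d)\Z_n$, the unique subgroup of $\Z_n$ of order $d$. Hence the condition is that $n/d$ divides both $a$ and $b$ (reading $a,b$ as their representatives in $\{0, \dots, n-1\}$); since $n/d$ also divides $n$, this is equivalent to $n/d$ dividing each of $\gcd(n,a)$ and $\gcd(n,b)$, and therefore to $n/d$ dividing $\gcd(a, b, n)$.

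Next I would put $g = \gcd(a, b, n)$ and chase the divisibility. The condition $(n/d) \mid g$ is the same as $n \mid dg$; since $g \mid n$ this rearranges to $(n/g) \mid d = \gcd(n, j)$, and because $n/g$ already divides $n$, this in turn is equivalent to $(n/g) \mid j$. Thus $\chi_j$ contributes to $\phi_k$ if and only if $j$ is a multiple of $n/\gcd(a, b, n)$, which is the first assertion of the lemma.

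Finally, counting the contributing characters amounts to counting $j \in \{0, 1, \dots, n-1\}$ divisible by $n/g$; these are exactly $0, n/g, 2n/g, \dots, (g-1)(n/g)$, so there are $g = \gcd(a, b, n)$ of them. I do not expect a genuine obstacle here: the only points requiring care are the translation between ``divides as an integer'' and ``lies in the subgroup $(n/d)\Z_n$'' and keeping track of the reductions modulo $n$ in the gcd manipulations; once $g \mid n$ is invoked, the divisibility chain closes up cleanly.
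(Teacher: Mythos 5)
Your argument is correct and follows essentially the same route as the paper: reduce to $a,b\in\ker\chi_j=(n/\gcd(n,j))\Z_n$ via \Cref{equivalent characters}, then chase the divisibility $n/\gcd(n,j)\mid\gcd(a,b,n)$ into $n/\gcd(a,b,n)\mid j$, and count the multiples of $n/\gcd(a,b,n)$ in $\Z_n$. The paper's proof is just a more compressed version of the same gcd manipulation.
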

    \begin{proof}
    The character $\chi_j$ contributes to the limiting distribution if and only if $a,b \in \ker\chi_j = (n/\gcd(n,j)) \Z_n$. This is equivalent to $n/\gcd(n,j) \mid a,b$, which is the same as $n/\gcd(a,b,n) \mid \gcd(n,j)$, and this is further equivalent to $j$ being divisible by $n/\gcd(a,b,n)$.
\end{proof}

\begin{corollary}
    \label[corollary]{cor:uniform distribution}
    The limiting distribution $\phi_k$ is uniform if and only if $\gcd(a,b,n) = 1$.
\end{corollary}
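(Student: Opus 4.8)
The plan is to read the corollary straight off \Cref{lemma:contributing_characters} together with the uniqueness of Fourier expansions, i.e. the fact that the characters $\chi_j$ form a basis of $\lspace(\Z_n)$. No estimates are needed; this is a one-line consequence of the structure of $\phi_k$ already established.

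First I would dispatch the easy implication. Suppose $\gcd(a,b,n) = 1$. Then \Cref{lemma:contributing_characters} says that the characters appearing in $\phi_k$ are exactly those $\chi_j$ with $j$ divisible by $n/\gcd(a,b,n) = n$, i.e. only $\chi_0$ contributes. Its eigenvalue is $\lambda_0 = \overline{\chi_0(\alpha')} = 1$, so $\phi_k = \tfrac1n \chi_0$, which is the constant function $1/n$ on $\Z_n$, namely the uniform distribution.

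For the converse, suppose $d = \gcd(a,b,n) > 1$. By \Cref{lemma:contributing_characters} there are $d \geq 2$ contributing characters, the $\chi_j$ with $j$ a multiple of $n/d$, so at least one of them, say $\chi_j$, is non-trivial. In the expansion $\phi_k = \tfrac1n \sum_{|\lambda_j| = 1} \lambda_j^k \chi_j$ the coefficient of $\chi_j$ is $\tfrac1n \lambda_j^k$, which is nonzero since $|\lambda_j| = 1$. Because the $\chi_j$ are linearly independent, $\phi_k$ has a nonvanishing component along a non-trivial character, hence $\phi_k \neq \tfrac1n \chi_0$; this holds for every $k$, so $\phi_k$ is never uniform.

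I do not expect any real obstacle here once \Cref{lemma:contributing_characters} is available. The only point worth stating carefully is the characterization used implicitly in both directions: a probability distribution on $\Z_n$ is uniform precisely when its Fourier expansion involves no non-trivial character, and the condition $|\lambda_j| = 1$ guarantees that taking $k$-th powers cannot annihilate such a term. Everything else is bookkeeping.
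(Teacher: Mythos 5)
Your argument is correct and matches the paper's (implicit) reasoning: the corollary is intended to follow directly from \Cref{lemma:contributing_characters} by noting that $\gcd(a,b,n)=1$ is exactly the condition that only the trivial character survives in $\phi_k$, together with linear independence of characters (and the observation that each surviving coefficient $\tfrac1n\lambda_j^k$ is nonzero since $|\lambda_j|=1$, so no cancellation can occur). Your write-up spells out precisely the bookkeeping the paper leaves to the reader.
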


In fact, we always get a uniform distribution on $\gcd(a,b,n)\Z_n$ provided the number of steps $k$ is divisible by $\gcd(a,b,n)$, and more generally a uniform distribution on a coset of $\gcd(a,b,n)\Z_n$.

\begin{lemma}
    \label[lemma]{lemma:distribution_nu}
    The distribution $\phi_k$ is uniform on the coset $k\alpha' + \gcd(a,b,n)\Z_n$.
    \end{lemma}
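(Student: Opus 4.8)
The plan is to evaluate $\phi_k$ pointwise by substituting the explicit description of the contributing characters from \Cref{lemma:contributing_characters} and collapsing the resulting geometric sum. Write $d = \gcd(a,b,n)$. By \Cref{lemma:contributing_characters} the characters appearing in $\phi_k$ are exactly the $\chi_j$ with $j = (n/d)m$ for $m \in \{0,1,\dots,d-1\}$, and it was recorded just before \Cref{equivalent characters} that each such character has eigenvalue $\lambda_j = \overline{\chi_j(\alpha')} = \chi_j(-\alpha')$.

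First I would use that $\chi_j$ is a homomorphism to write $\lambda_j^k = \chi_j(-\alpha')^k = \chi_j(-k\alpha')$, so that
\[
    \phi_k(x) = \frac1n \sum_{m=0}^{d-1} \chi_{(n/d)m}(-k\alpha')\,\chi_{(n/d)m}(x) = \frac1n \sum_{m=0}^{d-1} \chi_{(n/d)m}(x - k\alpha').
\]
Then I would observe that $\chi_{(n/d)m}(y) = \omega^{(n/d)my} = \zeta^{my}$, where $\zeta = e^{2\pi\imag/d}$ is a primitive $d$-th root of unity, so the inner sum is the standard geometric sum $\sum_{m=0}^{d-1}\zeta^{m(x-k\alpha')}$; this equals $d$ when $x \equiv k\alpha' \pmod d$ (a well-defined condition on $x \in \Z_n$ since $d \mid n$) and $0$ otherwise.

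Combining, $\phi_k(x) = d/n$ for $x$ in the coset $k\alpha' + d\Z_n$ and $\phi_k(x) = 0$ elsewhere; since $d\Z_n$ has exactly $n/d$ elements, this is precisely the uniform distribution on $k\alpha' + \gcd(a,b,n)\Z_n$. There is no substantial obstacle here: the computation is a routine application of character orthogonality on $\Z_n$, and the only points needing care are the conjugation in $\lambda_j = \overline{\chi_j(\alpha')}$ and the fact that reduction modulo $d$ factors through $\Z_n$, both of which are immediate.
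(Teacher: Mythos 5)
Your proof is correct and follows essentially the same route as the paper: identify the contributing characters via \Cref{lemma:contributing_characters}, pull the eigenvalue into the character argument as $\lambda_j^k\chi_j(x) = \chi_j(x - k\alpha')$, and evaluate the sum. The only difference is that you explicitly collapse the sum as a geometric series to get both the on-coset value $d/n$ and the off-coset vanishing, whereas the paper only checks that each summand equals $1$ on the coset and leaves the off-coset vanishing implicit (via the normalization $\sum_x \phi_k(x) = 1$); your version is marginally more self-contained.
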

    \begin{proof}
    Let $d = \gcd(a,b,n)$. We have
    \[
        \phi_k = \frac{1}{n} \sum_{\frac{n}{d} \mid j} \overline{\chi_j(\alpha')}^k \chi_j.
    \]
    Any $x \in k\alpha' + d\Z_n$ can be written as $x = k\alpha' + md$ for some $m \in \Z$. Thus
    \[
        \overline{\chi_j(\alpha')}^k \chi_j(x) = \chi_j(-k\alpha')\chi_j(k\alpha' + md) = \chi_j(md) = 1
    \]
    for any $j$ appearing in the sum. Therefore $\phi_k(x) = d/n$, and so $\phi_k$ is uniform on $k\alpha' + d\Z_n$.
\end{proof}

\subsection{Randomly chosen parameters}

Suppose that the parameters $\alpha', \beta', \gamma'$ are chosen uniformly at random from $\Z_n$. In this case, the parameters $a = \alpha' - \beta'$ and $b = \alpha' - \gamma'$ are also uniformly distributed in $\Z_n$. Thus the probability that a character $\chi_j$ contributes to the limiting distribution is equal to $(|\ker\chi_j| / n)^2$. Taking $j = 2$, we see that the probability that $\chi_{n/2}$ contributes is $1/4$. It follows that the limiting distribution $\phi_k$ is uniform with probability at most $3/4$. We will show that for any given $\epsilon > 0$, there is a positive proportion of primes for which the probability that any other character contributes to the limiting distribution is less than $\epsilon$. In fact, we can compute the exact probability that the limiting distribution is uniform in terms of the prime divisors of $n$.

\begin{lemma}
    \label[lemma]{probability of convergence}
    Let $a,b \in \Z_n$ be uniformly random. Then
    \[
        \Pr_{a,b}(\gcd(a,b,n) = 1) = \prod_{q \mid n}\left(1 - \frac{1}{q^2}\right),
    \] 
    where the product is over all primes $q$ dividing $n$.
\end{lemma}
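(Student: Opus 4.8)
The plan is to compute the probability via the Chinese Remainder Theorem applied to the modulus $n$. Write $n = \prod_{q \mid n} q^{e_q}$ as its prime factorization. Since $\Z_n \cong \prod_q \Z_{q^{e_q}}$ as rings, a uniformly random pair $(a,b) \in \Z_n^2$ corresponds, under this isomorphism, to an independent family of uniformly random pairs $(a_q, b_q) \in \Z_{q^{e_q}}^2$, one for each prime $q \mid n$. The event $\gcd(a,b,n) = 1$ holds if and only if for every prime $q \mid n$, the residues $a_q$ and $b_q$ are not both divisible by $q$; these events are independent across $q$, so the probability factors as $\prod_{q \mid n} \P(a_q \not\equiv 0 \text{ or } b_q \not\equiv 0 \pmod q)$.

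It then remains to evaluate each local factor. Fixing a prime $q \mid n$, the pair $(a_q, b_q)$ is uniform in $\Z_{q^{e_q}}^2$, and reduction modulo $q$ sends it to a uniform pair in $(\Z/q)^2$. The probability that both coordinates are $\equiv 0 \pmod q$ is $(1/q)\cdot(1/q) = 1/q^2$, so the complementary probability is $1 - 1/q^2$. Multiplying over all $q \mid n$ gives the claimed product formula.

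Strictly speaking one should justify why $\gcd(a,b,n)=1$ is equivalent to the stated local condition: a prime $q$ divides $\gcd(a,b,n)$ precisely when $q \mid n$, $q \mid a$, and $q \mid b$, so $\gcd(a,b,n) = 1$ exactly when no prime $q \mid n$ simultaneously divides $a$ and $b$, i.e.\ when for each $q \mid n$ at least one of $a, b$ is a unit modulo $q$. I would state this reduction explicitly and then invoke CRT-independence. There is no real obstacle here; the only point requiring a line of care is the independence of the local events, which follows immediately from the ring isomorphism $\Z_n \cong \prod_q \Z_{q^{e_q}}$ turning the uniform measure on $\Z_n^2$ into a product measure. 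The computation is entirely routine once that structural observation is in place.
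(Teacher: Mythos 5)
Your argument is correct and is essentially the paper's proof: the paper simply asserts that the events (``$q$ divides both $a$ and $b$'') are independent across distinct primes $q \mid n$ and multiplies the local probabilities $1 - 1/q^2$, whereas you spell out that this independence comes from the CRT isomorphism $\Z_n \cong \prod_q \Z_{q^{e_q}}$ turning the uniform measure on $\Z_n^2$ into a product measure. Same approach, with the one implicit step made explicit.
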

\begin{proof}
    For each prime $q \mid n$, the probability that $q$ divides both $a$ and $b$ is $1/q^2$. Since the events for distinct primes are independent, the probability that no prime divisor of $n$ divides both $a$ and $b$ (which is exactly the probability that $\gcd(a,b,n) = 1$) is
    \[
    \prod_{q \mid n}\left(1 - \frac{1}{q^2}\right). \qedhere
    \]
\end{proof}

When the only small prime dividing $n$ is $2$, the probability that the limiting distribution is uniform is close to $3/4$. Conversely, when $n$ is divisible by all the primes up to some large number (‘‘divisible by all primes’’), the probability that the limiting distribution is uniform is close to
\begin{equation}
    \label{eq:1/zeta2}
    \prod_{p \in \P} \left(1 - \frac{1}{p^2}\right) = \frac{1}{\zeta(2)} \approx 0.6079.
\end{equation}
We will now give more precise estimates for the probability that the limiting distribution is uniform and show it can come arbitrarily close to almost all numbers between $1/\zeta(2)$ and $3/4$ apart from some subinterval.

\subsection{Probability of convergence to the uniform distribution}

\begin{lemma}
    \label[lemma]{determined factors}
    Let $X$ and $Y$ be two finite disjoint sets of odd primes. There exists a positive lower density\footnote{The lower density of a subset $Z$ of $\P$ is $\liminf_{n \to \infty} |Z \cap \P_{\leq n}|/|\P_{\leq n}|$.} subset of primes $p \in \P$ that satisfy
    \begin{enumerate}
        \item All primes from $X$ divide $p - 1$.
        \item No prime from $Y$ divides $p - 1$.
    \end{enumerate}
\end{lemma}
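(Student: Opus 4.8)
The plan is to reduce the statement to Dirichlet's theorem on primes in arithmetic progressions. Set $N = \prod_{q \in X \cup Y} q$; since $X$ and $Y$ are disjoint finite sets of odd primes, $N$ is a squarefree odd integer (with the conventions $N = 1$ if both sets are empty). I would pick a target residue class modulo $N$ via the Chinese Remainder Theorem: impose $p \equiv 1 \pmod{q}$ for every $q \in X$, and $p \equiv 2 \pmod{q}$ for every $q \in Y$. Here $2$ is chosen only because it is a fixed residue that is neither $0$ nor $1$ modulo any odd prime $q \geq 3$; any such residue works. By CRT these congruences combine into a single congruence $p \equiv a \pmod{N}$ for some $a$, and $\gcd(a, N) = 1$: modulo each $q \in X$ we have $a \equiv 1$, and modulo each $q \in Y$ we have $a \equiv 2$, so $a$ is a unit modulo every prime dividing $N$.

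Next I would invoke Dirichlet's theorem in its quantitative form: the set of primes $p$ with $p \equiv a \pmod{N}$ has natural density $1/\varphi(N)$ inside $\P$, and in particular positive lower density. It then remains to verify that all but finitely many primes in this progression satisfy conditions (1) and (2). Indeed, if $p \equiv a \pmod{N}$ and $p > N$, then $p$ is coprime to $N$, hence $p \neq q$ for every $q \in X \cup Y$. Consequently, for $q \in X$ the congruence $p \equiv 1 \pmod{q}$ genuinely gives $q \mid p - 1$, while for $q \in Y$ the congruence $p \equiv 2 \pmod{q}$ gives $p - 1 \equiv 1 \not\equiv 0 \pmod{q}$, so $q \nmid p - 1$. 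Discarding the finitely many primes $p \leq N$ does not change the lower density, so the resulting set of primes has lower density at least $1/\varphi(N) > 0$, as claimed.

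There is essentially no obstacle here beyond citing Dirichlet's theorem. The only points requiring care are ensuring that the chosen residues make $a$ a unit modulo $N$, so that Dirichlet applies (this is automatic because $X, Y$ consist of odd primes and we avoid the residue $0$), and the minor bookkeeping that removes the small primes dividing $N$. If one wished to avoid the quantitative statement, the weaker fact that each such progression contains infinitely many primes would not suffice, since the lemma asks for positive lower density; thus the density version of Dirichlet's theorem is the natural tool.
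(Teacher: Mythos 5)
Your proposal is correct and takes essentially the same route as the paper: combine the congruences $p \equiv 1 \pmod q$ for $q \in X$ and $p \equiv 2 \pmod q$ for $q \in Y$ into a single congruence class modulo $\prod_{q \in X \cup Y} q$ via the Chinese Remainder Theorem, note the resulting residue is a unit, and invoke Dirichlet's density theorem. The paper is slightly terser (and your discussion of discarding primes $p \leq N$ is unnecessary, since $\gcd(a,N)=1$ already forces $p \nmid N$), but the argument is the same.
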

\begin{proof}
    Let $x = \prod_{q \in X}q$ and $y = \prod_{q \in Y}q$. Since $x$ and $y$ are coprime, the conditions $p \equiv 1 \pmod{x}$ and $p \equiv 2 \pmod{y}$ are equivalent to $p \equiv c \pmod{x y}$ for some $c \in \N$ by the Chinese remainder theorem. As $c$ is coprime to $x y$, the proportion of primes congruent to $c$ modulo $x y$ is $1 / \phi(x y) > 0$ (see \cite[Section VI.4]{serre2012course}). For any prime $p$ with this property, $p - 1$ is divisible by $x$ and is coprime to $y$.
\end{proof}

\begin{lemma}
    \label[lemma]{subset sum density}
    Let $X = \sum_{j \geq 1} x_j$ be the sum of a convergent series of positive numbers, where $x_k < \sum_{j \geq k + 1} x_j$ for each $k \in \N$. Then for any open interval $(s, t)$ contained in $(0, X)$, there is a finite subset $S \subseteq \N$ such that $\sum_{j \in S} x_j \in (s, t)$.
\end{lemma}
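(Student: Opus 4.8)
The plan is to use a greedy algorithm to select the subset $S$. The crucial structural hypothesis is that each term is dominated by the tail that follows it: $x_k < \sum_{j > k} x_j$. This is precisely the condition that guarantees there are no ``gaps'' in the set of achievable subset sums, so that the partial sums of a greedily chosen subsequence can be steered into any prescribed window.

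First I would set up the greedy selection. Fix the target interval $(s,t) \subseteq (0,X)$ and put $\varepsilon = t - s > 0$. Since the series converges, choose an index $N$ such that the tail $\sum_{j > N} x_j < \varepsilon$; in particular $x_j < \varepsilon$ for all $j > N$. Now run the following process over $j = 1, 2, 3, \dots$: maintain a running sum $\sigma$, initially $0$; at stage $j$, add $x_j$ to $S$ (and to $\sigma$) if doing so keeps $\sigma \leq t$, i.e. if $\sigma + x_j \leq t$, and otherwise skip $x_j$. Let $\sigma_\infty = \sum_{j \in S} x_j$ be the final sum; it is clearly well-defined and satisfies $\sigma_\infty \leq t$, and finiteness of $S$ is not automatic but also not needed — if infinitely many terms are chosen the sum still converges, and I can truncate at the end since the discarded tail is tiny. (Cleaner: once $\sigma$ exceeds $s$ I may stop, giving a finite $S$ outright.)

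The key step is to show $\sigma_\infty > s$, which I would prove by contradiction. Suppose $\sigma_\infty \leq s$. I claim that every sufficiently large index $j$ must then have been rejected: indeed, if $j$ were accepted, fine; and I want to argue that from some point on, rejection forces $\sigma$ to be close to $t$. More precisely, consider any index $j > N$ that is rejected. Rejection means $\sigma^{(j-1)} + x_j > t$, where $\sigma^{(j-1)}$ is the running sum just before stage $j$; since $x_j < \varepsilon$, this gives $\sigma^{(j-1)} > t - \varepsilon = s$. But $\sigma^{(j-1)} \leq \sigma_\infty \leq s$, a contradiction — unless no index $j > N$ is ever rejected. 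So all indices $j > N$ are accepted, whence $\sigma_\infty \geq \sum_{j > N} x_j$. Now here is where the domination hypothesis enters: I also need $\sigma_\infty$ to pick up enough from the first $N$ terms, but actually the cleaner route is to iterate the hypothesis. Applying $x_k < \sum_{j>k} x_j$ repeatedly is not literally needed; rather, observe that since all of $x_{N+1}, x_{N+2}, \dots$ are accepted and their total is $\sum_{j>N} x_j$, and since the running sum never exceeds $t$, we get $\sum_{j > N} x_j \leq t$, which is consistent — so this branch alone is not yet a contradiction. To close it, I use the hypothesis at index $N$: we have $x_N < \sum_{j > N} x_j$, and more usefully $\sum_{j \leq N, j \notin S} x_j$ combined with the accepted tail. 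The honest argument: let $m$ be the largest rejected index (it exists, else $S = \N$ and $\sigma_\infty = X > s$, done). At stage $m$, $\sigma^{(m-1)} + x_m > t$. Every index after $m$ is accepted, so $\sigma_\infty = \sigma^{(m-1)} + \sum_{j > m} x_j > t - x_m + \sum_{j>m} x_j > t$ by the domination hypothesis applied at $m$ — contradicting $\sigma_\infty \leq t$. Hence the assumption $\sigma_\infty \leq s$ fails, so $s < \sigma_\infty \leq t$, and truncating $S$ at its largest element (or stopping early) makes it finite.

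The main obstacle I anticipate is organizing the contradiction argument so that the domination hypothesis $x_k < \sum_{j > k} x_j$ is invoked cleanly at exactly the right index — as sketched, the right move is to look at the \emph{last} rejected index $m$ and note that everything after $m$ is accepted, so the hypothesis at $m$ forces the total to overshoot $t$, which is impossible. Handling the degenerate cases (no index ever rejected; $S$ turning out infinite) is routine: in the first case $\sigma_\infty = X > s$ directly, and in the second I simply truncate, losing at most the arbitrarily small tail beyond the point where $\sigma$ first exceeds $s$.
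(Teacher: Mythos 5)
Your approach — greedy selection plus invoking the domination hypothesis $x_m < \sum_{j>m} x_j$ at a rejected index $m$ — is essentially the same as the paper's. The one genuine difference is how the argument closes. You assume $\sigma_\infty \leq s$, deduce that rejections must stop by index $N$, and then apply the hypothesis at the \emph{last} rejection to force $\sigma_\infty > t$, a contradiction. The paper runs the greedy the same way but argues directly: if $k$ is rejected then $\sum_{j\geq k+1} x_j + \sum_{j \in S_{k-1}} x_j > x_k + \sum_{j \in S_{k-1}} x_j \geq t$, which (since the full selected sum stays $\leq t$) forces some later index $k'>k$ to also be rejected; iterating gives infinitely many rejections, and one then simply picks a rejected $m$ far enough out that $x_m < t - s$, so that the finite prefix $S_{m-1}$ already has sum in $[t-x_m, t) \subseteq (s,t)$. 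Both proofs turn on exactly the same two facts (the rejection inequality and domination), but the paper's version avoids your detour through a contradiction hypothesis and hands you the finite witness immediately.

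There is one small but real gap in your version: your acceptance rule is $\sigma + x_j \leq t$. If the running sum ever hits $t$ exactly, every subsequent term is rejected, $S$ is finite with sum equal to $t$, which lies outside the open interval, and truncating one step earlier need not leave you above $s$. Your contradiction argument also does not rule this out, since $\sigma_\infty = t$ is perfectly consistent with $\sigma_\infty > s$. The paper sidesteps this by using the strict rule $\sigma + x_j < t$, which keeps every partial sum strictly below $t$; with that change, ``stop at the first $j$ with $\sigma^{(j)} > s$'' produces a finite set whose sum lies in $(s,t)$, and the rest of your argument (with rejection now meaning $\sigma^{(m-1)} + x_m \geq t$, which still combines with $\sum_{j>m} x_j > x_m$ to give $\sigma_\infty > t$) goes through verbatim.
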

\begin{proof}
    Inductively define subsets $S_n \subseteq \N$ by
    \[
        S_1 = \begin{cases}
            \{ x_1 \} & x_1 < t \\
            \emptyset & \text{otherwise},
        \end{cases}
        \qquad
        S_{n+1} = \begin{cases}
            S_n \cup \{ n + 1 \} &  x_{n + 1} + \sum_{j \in S_n} x_j < t \\
            S_n & \text{otherwise}.
        \end{cases}
    \]
    Take $S = \bigcup_{n \geq 1} S_n$. We may assume that $t < X$ after possibly shortening the interval. Thus we can find some $k \in \N$ with $k \notin S$. Then
    \[
    \sum_{j \geq k + 1} x_j + \sum_{j \in S_{k - 1}}x_j >  x_k + \sum_{j \in S_{k - 1}}x_j \geq t,
    \]
    so we can find some $k' > k$ with $k' \notin S$. Therefore, there are arbitrarily large numbers that are not in $S$. Since $\lim_{j \to \infty}x_j = 0$, there is an $m \in \N \setminus S$ with $x_m < t - s$. Since $m \notin S$, we have $\sum_{j \in S_{m - 1}}x_j + x_m \geq t$ hence $\sum_{j \in S_{m - 1}}x_j \geq t - x_m > s$. Hence $S_{m - 1}$ is the desired finite subset.
\end{proof}

We shall apply the previous lemma with $x_j = -\log(1 - 1/p_j^2)$. The terms $x_j$ are positive and $\sum_{j \geq 1} x_j = \log{\zeta(2)}$ by \eqref{eq:1/zeta2}. However, the condition that $x_k < \sum_{j \geq k + 1} x_j$ fails to hold for all $k \in \N$. Here is how the numbers look like for the first few primes:
\begin{align*}
    x_1 &\approx 0.2877 & 
    x_2 &\approx 0.1178 & 
    x_3 &\approx 0.0408 & 
    x_4 &\approx 0.0206 \\
    \sum_{j \geq 2} x_j &\approx 0.2100 &
    \sum_{j \geq 3} x_j &\approx 0.0922 &
    \sum_{j \geq 4} x_j &\approx 0.0514 &
    \sum_{j \geq 5} x_j &\approx 0.0308 
\end{align*}
The inequality fails with $k = 1, 2$, and works with $k = 3, 4$. Let us verify that the condition does hold from that point on, so we can use the previous lemma with the sequence $(x_k)_{k \geq 3}$.

\begin{lemma}
    \label[lemma]{density condition}
    Let $p_j$ be the $j$-th prime and let $x_j = -\log(1 - 1 / p_j^2)$. Then $x_k < \sum_{j \geq k + 1} x_j$ for all $k \geq 5$.
\end{lemma}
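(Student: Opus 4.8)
The plan is to sandwich $x_j$ between $1/p_j^2$ and $1/(p_j^2-1)$, which reduces the statement to a question about how fast consecutive primes grow. From the expansion $-\log(1-u)=\sum_{m\ge 1}u^m/m$, valid for $0<u<1$, one reads off the elementary bounds $u<-\log(1-u)<u/(1-u)$, and specializing to $u=1/p_j^2$ gives
\[
    \frac{1}{p_j^2}<x_j<\frac{1}{p_j^2-1}\qquad(j\ge 1).
\]
Consequently it is enough to prove $\dfrac{1}{p_k^2-1}\le\sum_{j>k}\dfrac{1}{p_j^2}$ for every $k\ge 5$, since then $x_k<\dfrac{1}{p_k^2-1}\le\sum_{j>k}\dfrac{1}{p_j^2}<\sum_{j>k}x_j$.

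For $k\ge 10$ I would feed in an explicit Bertrand-type bound: by Nagura's theorem every real $n\ge 25$ has a prime strictly between $n$ and $\tfrac{6}{5}n$. Since $p_k\ge p_{10}=29\ge 25$ whenever $k\ge 10$, this yields $p_{k+1}<\tfrac{6}{5}p_k$, and iterating (each $p_{k+i}$ stays $\ge 25$) gives $p_{k+m}<(6/5)^m p_k$ for all $m\ge 1$. Summing the resulting geometric series,
\[
    \sum_{j>k}\frac{1}{p_j^2}>\frac{1}{p_k^2}\sum_{m\ge 1}\left(\frac{5}{6}\right)^{2m}=\frac{25}{11\,p_k^2}>\frac{1}{p_k^2-1},
\]
where the last inequality is merely $14p_k^2>25$. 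This settles all $k\ge 10$.

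The five remaining cases $k\in\{5,6,7,8,9\}$ I would dispatch by a direct numerical check: in each of them already two terms of the tail suffice, i.e. $\dfrac{1}{p_k^2-1}<\dfrac{1}{p_{k+1}^2}+\dfrac{1}{p_{k+2}^2}$ (for instance $\tfrac{1}{120}\approx 0.0083<0.0094\approx\tfrac{1}{169}+\tfrac{1}{289}$ for $k=5$, and similarly for $k=6,7,8,9$), which is a finite computation combined with the lower bound $x_j>1/p_j^2$.

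The only spot requiring care is the constant in the gap bound: the plain Bertrand postulate $p_{k+1}<2p_k$ only gives $\sum_{j>k}1/p_j^2>1/(3p_k^2)$, which is too weak; in fact any multiplicative gap bound $p_{k+1}<cp_k$ with $c<\sqrt2$ would work, and Nagura's $c=6/5$ leaves comfortable room. Everything else is routine: two elementary $\log$ estimates, one geometric series, and the short finite verification above.
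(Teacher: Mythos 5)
Your proof is correct and every numerical check holds (I verified the five small cases and the geometric-series estimate). The underlying philosophy is the same as the paper's — exploit a multiplicative gap bound $p_{j+1}<cp_j$ with $c<\sqrt2$ to make the tail a convergent geometric series dominating the head — but your route through it is cleaner in two respects. First, the paper works directly with $x_j$: it shows $x_{j+1}>x_j/2$ for all $j\geq 5$ (which implicitly requires $p_{j+1}/p_j < \sqrt 2$), then telescopes to get $\sum_{j\geq k+1}x_j > x_k\sum_{m\geq 1}2^{-m}=x_k$. You instead sandwich $x_j$ between $1/p_j^2$ and $1/(p_j^2-1)$ and reduce to the purely arithmetic inequality $1/(p_k^2-1)\leq\sum_{j>k}1/p_j^2$, which removes the logarithms from the problem entirely; you also isolate the precise threshold $c<\sqrt 2$, which the paper leaves implicit. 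Second, the paper derives its gap bound $p_{j+1}<\tfrac{14}{10}p_j$ (for $j\geq 28$) from the recent explicit estimates of Axler, and consequently must verify the claim computationally for $j\in\{5,\ldots,27\}$; you instead invoke the much older and more elementary Nagura bound $p_{j+1}<\tfrac{6}{5}p_j$ (valid once $p_j\geq 25$, i.e.\ $j\geq 10$), leaving only $k\in\{5,\ldots,9\}$ to check by hand, and even there two terms of the tail suffice. Both are sound; yours trades the stronger modern prime-counting input for a shorter finite verification and a more transparent reduction.
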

\begin{proof}
    We first claim that $x_{j + 1} > x_j / 2$ for any $j \geq 5$. This is equivalent to 
    \begin{equation}
        \label{eq:equiv_condition}
        \left(1 - \frac{1}{p_{j + 1}^2}\right)^2 < 1 - \frac{1}{p_j^2}.
    \end{equation}
    It follows from \cite{axler2019new} that
    \[
    j\left(\log j + \log\log j - 3 / 2\right) < p_j < j\left(\log j + \log\log j - 1 / 2\right)
    \]
    for all $j \geq 20$. Basic analysis then gives $p_{j + 1} < 14 p_j / 10$
    for any $j \geq 28$, and hence
    \[
    \left(1 - \frac{1}{p_{j + 1}^2}\right)^2 = 1 - \frac{2}{p_{j + 1}^2} + \frac{1}{p_{j + 1}^4} < 1 - \frac{200}{144}\frac{1}{p_j^2} + \frac{1}{p_j^4} < 1 - \frac{1}{p_j^2}.
    \]
    It can be checked with a computer that \eqref{eq:equiv_condition} also holds for $j \in \{5, 6, \ldots, 27\}$. We obtain, for any $k \geq 5$,
    \[
    \sum_{j \geq k + 1} x_j > x_k \sum_{j \geq 1} \frac{1}{2^j} = x_k,
    \]
    as required.
\end{proof}

\begin{proposition}
    \label[proposition]{density}
    Let $(s, t)$ be a subinterval of
    \[
        \mathcal I = \left[ \frac{1}{\zeta(2)}, \frac23 \right] \cup \left[ \frac{9}{8 \zeta(2)}, \frac34 \right].
    \]
    Then there is a set of primes $p$ of positive lower density for which
    \[
        \prod_{q \mid p-1} \left( 1 - \frac{1}{q^2} \right) \in (s, t).
    \]
\end{proposition}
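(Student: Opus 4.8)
The plan is to reduce Proposition \ref{density} to the combination of Lemmas \ref{determined factors}, \ref{subset sum density}, and \ref{density condition}, handling separately the two pieces of $\mathcal I$. Write $P(p) = \prod_{q \mid p-1}(1 - 1/q^2)$. The key observation is that $P(p)$ depends only on which primes divide $p-1$, so by Lemma \ref{determined factors} we can prescribe, for any two finite disjoint sets of odd primes $X$ and $Y$, a positive lower density set of primes $p$ for which $X$ divides $p-1$ and $Y$ is coprime to $p-1$ — but we have \emph{no control} over the remaining odd primes, nor over whether $2 \mid p-1$ (though of course $2 \mid p-1$ for every odd prime $p$, so the factor $1 - 1/4 = 3/4$ is always present).

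For the second interval $(\tfrac{9}{8\zeta(2)}, \tfrac34)$: since $2 \mid p-1$ always, we have $P(p) = \tfrac34 \prod_{q \mid p-1,\ q \text{ odd}}(1 - 1/q^2)$, and this odd part ranges in $(\tfrac{1}{\zeta(2)} \cdot \tfrac{4}{3}, 1) = (\tfrac{4}{3\zeta(2)}, 1)$ — wait, more precisely the odd part is a product over a subset of odd primes, lying in $\big(\prod_{q \text{ odd}}(1-1/q^2), 1\big) = (\tfrac{4}{3\zeta(2)}, 1)$. So $P(p) \in (\tfrac{1}{\zeta(2)}, \tfrac34)$, but we want to land in $(\tfrac{9}{8\zeta(2)}, \tfrac34)$, i.e. we want the odd part to lie in $(\tfrac{9}{8\zeta(2)} \cdot \tfrac43, 1) = (\tfrac{3}{2\zeta(2)}, 1)$. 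I would force $3 \nmid p-1$ (so the factor $8/9$ from $q=3$ is absent): then $P(p) = \tfrac34 \prod_{q \mid p-1,\ q \geq 5}(1-1/q^2)$, and the tail product over primes $\geq 5$ lies in $(\prod_{q \geq 5}(1-1/q^2), 1)$. Now I apply Lemma \ref{subset sum density} to the series $\sum_{j \geq 3} x_j$ with $x_j = -\log(1-1/p_j^2)$ (legitimate by Lemma \ref{density condition}, whose hypothesis $x_k < \sum_{j>k} x_j$ holds for $k \geq 5$, and one checks $k=3,4$ directly from the displayed table), whose total sum is $\sum_{j\geq 3} x_j = \log\zeta(2) + \log\tfrac34 + \log\tfrac89 = \log\tfrac{2\zeta(2)}{3}$, hence $e^{-\sum} = \tfrac{3}{2\zeta(2)}$. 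Given a target subinterval of $(\tfrac{9}{8\zeta(2)},\tfrac34)$, I translate it via $-\log$ and $\times\tfrac43$ to an open subinterval of $(0, \log\tfrac{2\zeta(2)}{3})$, extract a finite set $S \subseteq \{3,4,5,\dots\}$ of indices with $\sum_{j \in S} x_j$ in it, set $X = \{p_j : j \in S\}$ (all odd, all $\geq 5$) and $Y = \{3\}$, and apply Lemma \ref{determined factors} to get a positive lower density set of primes $p$ with $X \mid p-1$, $3 \nmid p-1$, and \emph{no other} odd prime below $\max X$ constraint — but the issue is primes $\geq 5$ \emph{not} in $X$ that nonetheless divide $p-1$.

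\textbf{This last point is the main obstacle}, and here is how to dispose of it: extend $Y$ to contain \emph{all} primes $q$ with $5 \leq q \leq N$ that are not in $X$, for a sufficiently large cutoff $N$; then for $p$ in the density set from Lemma \ref{determined factors}, the only primes dividing $p-1$ are $\{2\} \cup X \cup \{\text{some primes} > N\}$, so $P(p) = \tfrac34 \cdot \prod_{q \in X}(1-1/q^2) \cdot \prod_{q \mid p-1,\ q > N}(1-1/q^2)$, and the last factor lies in $(\prod_{q > N}(1 - 1/q^2), 1)$, which can be made as close to $1$ as we like by taking $N$ large. Thus $P(p)$ is pinned within $\tfrac34\prod_{q\in X}(1-1/q^2) \cdot (1-\delta_N, 1)$; choosing the target subinterval slightly inside $(s,t)$ first and then $N$ large enough that $\delta_N$ is smaller than the slack, we conclude $P(p) \in (s,t)$ for all $p$ in a set of positive lower density. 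For the first interval $(\tfrac{1}{\zeta(2)}, \tfrac23)$ the argument is identical except one \emph{forces} $3 \mid p-1$ (so the $q=3$ factor $\tfrac89$ is present): then the achievable range for $\tfrac34 \cdot \tfrac89 \cdot \prod_{q \mid p-1,\ q\geq5}(1-1/q^2) = \tfrac23\prod_{q\mid p-1,\ q\geq 5}(1-1/q^2)$ is $(\tfrac23 \prod_{q\geq 5}(1-1/q^2),\ \tfrac23) = (\tfrac{1}{\zeta(2)}, \tfrac23)$, exactly matching the first component of $\mathcal I$, and the same Lemma \ref{subset sum density} extraction (now with total sum $\sum_{j\geq 3} x_j$ translated appropriately, noting $e^{-\sum_{j\geq 3}x_j} = \tfrac{3}{2\zeta(2)}$ and $\tfrac23 \cdot \tfrac{3}{2\zeta(2)} = \tfrac{1}{\zeta(2)}$) together with the extend-$Y$-up-to-$N$ trick finishes the proof.
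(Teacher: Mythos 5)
Your proposal is correct and follows essentially the same approach as the paper: both combine Lemmas \ref{determined factors}, \ref{subset sum density}, and \ref{density condition}, distinguish the two components of $\mathcal I$ by forcing or forbidding $3 \mid p-1$, extract a finite index set from the subset-sum density lemma applied to the primes $\geq 5$, and then forbid all remaining primes below a large cutoff so that the tail product over large primes contributes arbitrarily little. A minor point where you are in fact more careful: you keep $2$ out of the set $X$ passed to Lemma \ref{determined factors} by noting that $2 \mid p-1$ holds automatically, whereas the paper's proof takes $X = \{p_j : j \in S'\}$ with $1 \in S'$, so $2 \in X$, which technically violates that lemma's hypothesis that $X$ consist of odd primes (a trivially fixable slip).
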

\begin{proof}
Let $x_j = -\log(1 - 1 / p_j^2)$. It follows by combining the previous two lemmas that the sums $\sum_{j \in S} x_j$ with $S$ a finite subset of $\N \setminus \{ 1, 2 \}$ form a dense subset of the interval $\left(0, \log\zeta(2) + \log(3 / 4) + \log(8 / 9)\right) = \left(0, \log (2\zeta(2)/3) \right)$. Equivalently, the products $\prod_{j \in S} e^{-x_j} = \prod_{j \in S} (1 - 1/p_j^2)$ form a dense subset of the interval $\left(3 / (2\zeta(2)), 1\right)$. 

Suppose first that $(s,t) \subseteq (1/\zeta(2), 2/3)$. By the argument above, we can find a set $S$ with $\prod_{j \in S}(1 - 1 / p_j^2) \in (3s/2, 3t/2)$. Let $S' = S \cup \{ 1, 2 \}$, so that $\prod_{j \in S'}(1 - 1 / p_j^2) \in (s, t)$. We can thus find a $k$ larger than all elements in $S'$ such that 
\[
\prod_{j \in S'}\left(1 - \frac{1}{p_j^2}\right) > s \cdot \exp \left(\sum_{j \geq k} x_j \right) = s \cdot \prod_{j \geq k} \left(1 - \frac{1}{p_j^2}\right)^{-1}.
\]
Take $X = \{ p_j \mid j \in S' \}$ and $Y = \{ p_j \mid j < k \} \setminus X$. Then any prime $p$ with the property that all primes from $X$ divide $p - 1$ and no prime from $Y$ divides $p - 1$ satisfies
\[
t > \prod_{q \mid p - 1}\left(1 - \frac{1}{q^2}\right) > \prod_{j \in S' \lor j \geq k}\left(1 - \frac{1}{p_j^2}\right) > s.
\]
The set of primes $p$ with this property has positive lower density by Lemma \ref{determined factors}.

The case when $(s,t) \subseteq (9/(8\zeta(2)),3/4)$ can be handled in a similar way by taking $S' = S \cup \{ 1 \}$. The interval $(3/(2\zeta(2)), 1)$ transforms under multiplication by $3/4$ to $(9/(8\zeta(2)), 3/4)$.
\end{proof}

\begin{corollary}
    Let $(s, t)$ be a subinterval of $\mathcal I$. Then there is a set of primes $p$ of positive lower density for which the following holds.
    Let $\alpha', \beta'$, $\gamma'$ be uniformly random in $\Z_n$, and let $\nu$ be the uniform distribution on $\Z_n$, where $n = p-1$. Then $\P_{\alpha', \beta', \gamma'}(\phi_k = \nu) \in (s, t)$ for all $k \geq 0$.
\end{corollary}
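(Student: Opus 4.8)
The plan is to observe that the event $\{\phi_k = \nu\}$ does not actually depend on $k$, and then to simply combine the results already established. First I would recall from \Cref{lemma:distribution_nu} that $\phi_k$ is the uniform distribution on the coset $k\alpha' + \gcd(a,b,n)\Z_n$, where $a = \alpha' - \beta'$ and $b = \alpha' - \gamma'$. Consequently $\phi_k$ coincides with the uniform distribution $\nu$ on all of $\Z_n$ if and only if $\gcd(a,b,n) = 1$ (this is \Cref{cor:uniform distribution}), a condition that is manifestly independent of $k$. Hence $\P_{\alpha',\beta',\gamma'}(\phi_k = \nu)$ is one and the same number for every $k \geq 0$, namely $\P_{\alpha',\beta',\gamma'}(\gcd(a,b,n)=1)$.

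Next, since $\alpha',\beta',\gamma'$ are independent and uniform in $\Z_n$, the pair $(a,b) = (\alpha'-\beta',\, \alpha'-\gamma')$ is uniform in $\Z_n^2$. Then \Cref{probability of convergence} evaluates this common probability as
\[
    \P_{\alpha',\beta',\gamma'}(\phi_k = \nu) = \prod_{q \mid n}\left(1 - \frac{1}{q^2}\right),
\]
the product being over the primes $q$ dividing $n = p-1$. Finally I would invoke \Cref{density}: for the given subinterval $(s,t) \subseteq \mathcal I$ there is a set of primes $p$ of positive lower density for which $\prod_{q \mid p-1}(1 - 1/q^2) \in (s,t)$. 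For each such prime (discarding at most finitely many small ones so that $n > 1$, which does not affect positive lower density), the displayed identity gives $\P_{\alpha',\beta',\gamma'}(\phi_k = \nu) \in (s,t)$ simultaneously for all $k \geq 0$, as claimed.

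I do not expect a genuine obstacle here: all the substance has been front-loaded into \Cref{density} (and thence into \Cref{subset sum density,density condition,determined factors}) and into the Fourier-analytic identification of the contributing characters in \Cref{lemma:contributing_characters}. The only point that deserves a line of explanation is precisely the remark that $\{\phi_k = \nu\}$ is a $k$-independent event, so that the quantifier ``for all $k \geq 0$'' comes for free once the single-$k$ statement is in hand.
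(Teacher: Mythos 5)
Your proof is correct and is exactly the argument the paper leaves implicit: it combines \Cref{cor:uniform distribution} (so $\{\phi_k=\nu\}$ is the $k$-independent event $\gcd(a,b,n)=1$), \Cref{probability of convergence} (evaluating this probability as $\prod_{q\mid n}(1-1/q^2)$), and \Cref{density}. The caveat about discarding small primes is unnecessary since $n=p-1\ge 2$ automatically, but it does no harm.
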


We now transport this result to the original setting of the random Lie bracket.

\begin{corollary}
    Let $(s, t)$ be a subinterval of $\mathcal I$. Then there is a set of primes $p$ of positive lower density with the following property.
    Let $A,B$ be uniformly random in $\slfrak_2(\F_p)$, let $\phi_k$ be the limiting distribution of the corresponding random walk on $\Z_n$ after $k$ steps\footnote{If $\alpha \beta \gamma = 0$, there is no corresponding walk on $\Z_n$. In that case, just take $\phi_k = 0$.}, let $\nu_{2k}$ be the corresponding distribution on $\lin \{ [A,B] \} \setminus \{ 0 \}$, and let $\nu$ be the uniform distribution on $\lin \{ [A,B] \} \setminus \{ 0 \}$. Then $\P_{A,B}(\nu_{2k} = \nu) \in (s, t)$ for all $k \geq 0$.
\end{corollary}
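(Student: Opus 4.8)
The plan is to deduce this corollary from the previous one by transporting the relevant event from the cyclic‑group parameters to the random generators $A,B$ via \Cref{cor:probability_of_events}, which costs an error of size $O(1/p)$; that error will be absorbed by slightly shrinking the target interval before applying \Cref{density}.

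Fix $k \ge 0$. Let $E$ be the event, depending on $A,B$, that $\nu_{2k}$ equals the uniform distribution $\nu$ on $\lin\{[A,B]\}\setminus\{0\}$, and let $F$ be the event, depending on $a,b\in\Z_n$, that $\gcd(a,b,n)=1$; by \Cref{cor:uniform distribution} the latter is exactly the event that $\phi_k$ is uniform on $\Z_n$. Unwinding \Cref{sec:translating_to_cyclic_group}: on the event $\alpha\beta\gamma\neq 0$ the even‑step walk on the line $\lin\{[A,B]\}$ becomes, after normalising $[A,B]$ to $1$ and applying $\log\colon\F_p^\ast\to\Z_n$, precisely the walk on $\Z_n$ with step set $\{\alpha',\beta',\gamma'\}$, so $\nu_{2k}$ corresponds to $\phi_k$ and uniformity of $\nu_{2k}$ is equivalent to uniformity of $\phi_k$. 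With the convention $\nu_{2k}=0$ when $\alpha\beta\gamma=0$, the event $E$ cannot occur in that degenerate case, so $E=E\cap(\alpha\beta\gamma\neq 0)$ and this is equivalent to $F$ under $a=\log(\alpha/\beta)$, $b=\log(\alpha/\gamma)$. Hence \Cref{cor:probability_of_events} applies and gives $\abs{\P_{A,B}(E)-\P_{a,b}(F)}<11/p$. Since $F=\{\gcd(a,b,n)=1\}$ does not depend on $k$, both probabilities are independent of $k$, and $\P_{a,b}(F)=\prod_{q\mid n}(1-1/q^2)$ by \Cref{probability of convergence} (this is the quantity already appearing in the previous corollary).

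Now, given $(s,t)\subseteq\mathcal I$, pick $s<s'<t'<t$ and set $\epsilon_0=\min(s'-s,\,t-t')>0$; as $(s,t)$ lies in one of the two component intervals of $\mathcal I$, so does $(s',t')$. By \Cref{density} the set of primes $p$ with $\prod_{q\mid p-1}(1-1/q^2)\in(s',t')$ has positive lower density, and discarding from it the finitely many primes $p\le 11/\epsilon_0$ leaves a set of the same positive lower density. For every prime $p$ in this set and every $k\ge 0$,
\[
\P_{A,B}(\nu_{2k}=\nu)\in\left(\prod_{q\mid p-1}\Bigl(1-\tfrac{1}{q^2}\Bigr)-\tfrac{11}{p},\ \prod_{q\mid p-1}\Bigl(1-\tfrac{1}{q^2}\Bigr)+\tfrac{11}{p}\right)\subseteq(s'-\epsilon_0,\,t'+\epsilon_0)\subseteq(s,t),
\]
which is the claim. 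The only step requiring genuine care is the identification of $E$ with $F$, in particular handling the degenerate case $\alpha\beta\gamma=0$ together with the matching convention $\nu_{2k}=0$; everything quantitative has already been established in \Cref{cor:probability_of_events} and \Cref{density}, so no analytic obstacle remains.
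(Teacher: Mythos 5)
Your proposal is correct and follows essentially the same route as the paper: identify the event $\nu_{2k}=\nu$ (intersected with $\alpha\beta\gamma\neq 0$) with the event $\gcd(a,b,n)=1$ via \Cref{cor:uniform distribution}, transfer probabilities with the $11/p$ error from \Cref{cor:probability_of_events}, and invoke \Cref{density} after shrinking the target interval so as to absorb that error for all large $p$. The only difference is cosmetic: the paper applies the preceding corollary directly with the $p$-dependent interval $(s+11/p,\,t-11/p)$ (justified by its footnote that $t-s>22/p$ may be assumed), whereas you fix a $p$-independent subinterval $(s',t')$ up front and then discard the finitely many primes $p\le 11/\epsilon_0$, which is a slightly cleaner way to make the same quantifier order explicit.
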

\begin{proof}
    Take any $A, B \in \slfrak_2(\F_p)$ such that $G_{A,B}$ has nonzero entries. Then the random Lie bracket satisfies $\nu_{2k} = \nu$ if and only if the parameters $(a, b) = F(A, B)$ satisfy $\gcd(n, a, b) = 1$. By \Cref{cor:pushforward_vs_uniform} we have $\abs{\P_{A,B}(\nu_{2k} = \nu) - \P_{a,b}(\gcd(n, a, b) = 1)} < 15 / p$ and by the previous corollary, we have
    \[
        \P_{a,b}(\gcd(n,a,b) = 1) = \nu_{\Z_n^3}\left( \{ (\alpha', \beta', \gamma') \in \Z_n^3 \mid \phi_k = \nu_{\Z_n} \} \right) \in (s + 15/p,t - 15/p)\footnote{We can assume $p$ is large enough so that $t - s > 30/p$.}
    \]
    for a set of primes $p$ of positive lower density. Combining these gives us that $\Pr_{A,B}(\nu_{2k} = \nu) \in (s,t)$ for a set of primes $p$ of positive lower density. 
\end{proof}

In particular, for any $x \in \mathcal I$ and any $\epsilon > 0$, there is a set of primes of positive lower density such that the random Lie bracket converges to the uniform distribution on a line after an even number of steps with probability in $(x - \epsilon, x + \epsilon)$.

\section{Pre-cutoff}
\label[section]{sec:precutoff}

Let $A,B \in \slfrak_2(\F_p)$ be uniformly random. Let $\mu_{2k}$ be the distribution of the random Lie bracket after $2k$ steps, and let $\nu_{2k}$ be the associated limiting distribution, supported on $\lin\{ [A,B] \}$. We shall now prove that with high probability, the distance $\dist(\mu_{2k}, \nu_{2k})$ quickly transitions from $1$ to $0$ at around $k \approx p$. This is the phenomenon of pre-cutoff, and we establish it by providing sharp upper and lower bounds on $\dist(\mu_{2k}, \nu_{2k})$ in terms of $k$ and the parameters $a,b$. We further inspect what happens for generic $a,b$.

The distribution of the random Lie bracket after $2k$ steps is the same as the distribution of the random walk on $\F_p^\ast$ with parameters $\alpha, \beta, \gamma$ after $k$ steps. Supposing these parameters are all nonzero, the random walk is equivalent to a random walk on $\Z_n$ with parameters $\alpha', \beta', \gamma'$. Let $\sigma_k$ be the distribution of this random walk after $k$ steps. The Markov operator has eigenvalues
\[
    \lambda_j = \frac{1}{2}\chi_j(-\alpha') + \frac{1}{4}\chi_j(-\beta') + \frac{1}{4}\chi_j(-\gamma')
\] 
of absolute value
\[
    \abs{\lambda_j} = \abs{\frac{1}{2} + \frac{1}{4}\chi_j(a) + \frac{1}{4}\chi_j(b)},
\]
where $a = \alpha' - \beta'$ and $b = \alpha' - \gamma'$.

\subsection{Upper bound}

For $x \in \R$, let $\langle x \rangle$ be the unique value in the interval $(-1/2, 1/2]$ such that $x - \langle x \rangle$ is an integer. Let us associate to each eigenvalue $\lambda_j$ the vector 
\[
u_j = \left(\left\langle \frac{ja}{n}\right\rangle, \left\langle \frac{jb}{n}\right\rangle\right) \in \R^2.
\] 
The norm $\norm{u_j}$ controls $\abs{\lambda_j}$ in the following way.

\begin{lemma}
    \label[lemma]{upper bound for eigenvalues}
    $\abs{\lambda_j} \leq \exp\left(-\frac{1}{2}\norm{u_j}^2\right)$.
\end{lemma}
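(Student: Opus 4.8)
The plan is to bound $|\lambda_j|$ by writing it in terms of the angles $\theta_a = 2\pi \langle ja/n \rangle$ and $\theta_b = 2\pi \langle jb/n \rangle$, so that $\chi_j(a) = e^{i\theta_a}$ and $\chi_j(b) = e^{i\theta_b}$, and then estimating
\[
    |\lambda_j| = \left| \frac12 + \frac14 e^{i\theta_a} + \frac14 e^{i\theta_b} \right|.
\]
First I would compute $|\lambda_j|^2$ explicitly by expanding the squared modulus. Grouping the terms one gets a sum of cosines, roughly
\[
    |\lambda_j|^2 = \frac38 + \frac14 \cos\theta_a + \frac14 \cos\theta_b + \frac18 \cos(\theta_a - \theta_b),
\]
which can be reorganized as an average of $\cos^2$ of half-angles of $\theta_a$, $\theta_b$, and $\theta_a-\theta_b$ using the identity $\cos\theta = 2\cos^2(\theta/2) - 1$. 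In any case, the clean way is to use $|z_1 + z_2 + z_3|^2 \le \dots$ type manipulations or simply the bound $|\frac12 + \frac14 w_1 + \frac14 w_2| \le 1$ with equality analysis; the point is that the defect $1 - |\lambda_j|^2$ is controlled from below by a positive-definite quadratic form in $(\theta_a, \theta_b)$ for small angles.

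The key inequality I would isolate is an elementary one: for the specific convex combination at hand, $1 - |\lambda_j| \ge \tfrac12 \norm{u_j}^2$, or more precisely a bound of the shape $|\lambda_j| \le 1 - c\,\theta_a^2 - c\,\theta_b^2$ with the constants working out so that after passing through $\norm{u_j}^2 = \langle ja/n\rangle^2 + \langle jb/n\rangle^2 = \frac{1}{4\pi^2}(\theta_a^2 + \theta_b^2)$ one gets exactly the exponent $-\frac12\norm{u_j}^2$. Here one uses $\cos t \le 1 - \tfrac{2}{\pi^2} t^2$ for $t \in [-\pi, \pi]$ (the standard quadratic upper bound for cosine valid on that range, which is exactly the range of $\theta_a, \theta_b$ since $\langle \cdot \rangle \in (-1/2, 1/2]$). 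Applying this to the $\cos\theta_a$ and $\cos\theta_b$ terms and simply discarding or lower-bounding the cross term $\cos(\theta_a-\theta_b)$ by something $\le 1$ should give $|\lambda_j|^2 \le 1 - \tfrac{1}{\pi^2}(\theta_a^2 + \theta_b^2) + (\text{junk})$. One then finishes with $|\lambda_j| \le \sqrt{1-x} \le e^{-x/2}$ and reconciles the constants with the target $\exp(-\tfrac12\norm{u_j}^2)$, possibly needing the slightly sharper cosine bound or a more careful treatment of the cross term rather than naive discarding.

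The main obstacle I anticipate is precisely the cross term $\frac18\cos(\theta_a - \theta_b)$ (equivalently the contribution of $\langle j(a-b)/n \rangle$): it is not obviously dominated by $\theta_a^2 + \theta_b^2$ if one is careless, since $\theta_a - \theta_b$ as a real number can be large even when reduced mod $2\pi$ it is not, and one must not double-count or lose a factor. The honest route is to keep all three cosine terms, apply the quadratic bound $\cos t \le 1 - \frac{2}{\pi^2}t^2$ to each of $\theta_a, \theta_b, \theta_a - \theta_b$ after reducing the last mod $2\pi$ into $[-\pi,\pi]$, and observe that the resulting quadratic form $\frac{1}{4\pi^2}\bigl(\theta_a^2 + \theta_b^2 + \tfrac12(\theta_a-\theta_b)^2\bigr) \ge \frac{1}{4\pi^2}(\theta_a^2 + \theta_b^2)$ dominates $\norm{u_j}^2$, giving $|\lambda_j|^2 \le 1 - \norm{u_j}^2$ and hence $|\lambda_j| \le e^{-\norm{u_j}^2/2}$ with room to spare. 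I would present it in this last form since it keeps every step elementary and sidesteps the sign issues entirely.
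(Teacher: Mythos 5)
Your approach is correct and genuinely different from the paper's. The paper bounds $\abs{\lambda_j}$ via the triangle inequality, $\abs{\lambda_j} \leq \tfrac12\abs{\cos(\pi ja/n)} + \tfrac12\abs{\cos(\pi jb/n)}$, then applies the Gaussian-type bound $\cos(\pi x) \leq \exp(-2x^2)$ to each term and finishes with Jensen's inequality for the concave function $x \mapsto \exp(-2x^2)$ on $[-1/2,1/2]$. You instead expand $\abs{\lambda_j}^2$ exactly, getting $\tfrac38 + \tfrac14\cos\theta_a + \tfrac14\cos\theta_b + \tfrac18\cos(\theta_a - \theta_b)$, and use the Jordan-type bound $\cos t \leq 1 - \tfrac{2}{\pi^2}t^2$ on $[-\pi,\pi]$. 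Both routes are elementary; yours avoids Jensen entirely and keeps everything as a quadratic comparison. In fact, carried through cleanly, your method gives the strictly sharper estimate $\abs{\lambda_j} \leq \exp(-\norm{u_j}^2)$: applying $\cos t \leq 1 - \tfrac{2}{\pi^2}t^2$ to $\theta_a$ and $\theta_b$ and simply $\cos(\theta_a - \theta_b) \leq 1$ to the cross term already yields $\abs{\lambda_j}^2 \leq 1 - \tfrac{1}{2\pi^2}(\theta_a^2 + \theta_b^2) = 1 - 2\norm{u_j}^2$, and then $\sqrt{1 - 2\norm{u_j}^2} \leq e^{-\norm{u_j}^2}$. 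You wrote the quadratic form as $\tfrac{1}{4\pi^2}(\theta_a^2 + \theta_b^2 + \tfrac12(\theta_a - \theta_b)^2)$, but the coefficient coming out of $\tfrac14 \cdot \tfrac{2}{\pi^2}$ is $\tfrac{1}{2\pi^2}$, not $\tfrac{1}{4\pi^2}$; this factor-of-two slip happens to land you exactly on the stated target $\exp(-\tfrac12\norm{u_j}^2)$, but the honest bookkeeping gives you the stronger bound with room to spare. Your anticipated worry about the cross term is also unfounded for the same reason: since $\tfrac18\cos(\theta_a-\theta_b) \leq \tfrac18$ with no quadratic correction is already enough, there is no sign or double-counting issue to manage.
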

\begin{proof}
    The triangle inequality gives
    \[
    \abs{\lambda_j} \leq 
    \frac14 \left( \abs{1 + \chi_j(a)} + \abs{1 + \chi_j(b)} \right) =
    \frac12 \abs{\cos\left(\pi ja/n\right)} + \frac12 \abs{\cos\left(\pi jb/n\right)}.
    \]
    Note that $\abs{\cos(\pi x)} = \abs{\cos(\pi\langle x \rangle)}$. Using the bound $\cos(\pi x) \leq \exp(-2x^2)$, which holds for all $x \in [-3/2, 3/2]$, we obtain
    \[
    \abs{\lambda_j} \leq \frac12 \left(
        \exp( -2\langle ja/n \rangle^2 ) + \exp( -2\langle jb/n \rangle^2 )
        \right).
    \]
    By Jensen's inequality for the function $x \mapsto \exp(-2x^2)$, which is concave on the interval $[-1/2, 1/2]$, the latter is at most
    \[
        \exp \left( -\frac{1}{2}\left(\abs{\langle ja / n\rangle} + \abs{\langle jb / n\rangle}\right)^2 \right) 
        \leq \exp \left(- \frac{1}{2} \norm{u_j}^2 \right)
    \]
    and the proof is complete.
\end{proof}

The characters contributing to the limiting distribution $\phi_k$ are those $\chi_j$ for which $\abs{\lambda_j} = 1$. Note that this happens if and only if $a,b \in \ker \chi_j$, which is the same as $u_j = 0$. In order to control the residual distribution $\sigma_k - \phi_k = \omega_k$, let $\Lambda$ be the plane lattice
\[
\Lambda = \bigcup_{j \in \Z_n} \left( u_j + \Z^2 \right) = 
\left( \frac{a}{n}, \frac{b}{n} \right) \Z + \Z^2
\subseteq \R^2
\]
and let $\minnorm$ be the minimal distance between two distinct points in $\Lambda$. When $a,b \neq 0$, the lattice $\Lambda$ is nontrivial, and we have
\[
\minnorm = \min \left\{ \norm{u_j} \mid j \in \Z_n, \ u_j \neq 0 \right\}.
\] 

\begin{proposition}
    \label[proposition]{prop:distance upper bound}
    For all $n$ and all $a, b \in \Z_n$, we have
    \[
    \dist(\sigma_k, \phi_k)^2 \leq 11 \gcd(a, b, n) e^{-k \minnorm^2}.
    \]
\end{proposition}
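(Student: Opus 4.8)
The plan is to combine the Fourier estimate of \Cref{distance to limiting distribution} with the pointwise eigenvalue bound of \Cref{upper bound for eigenvalues}, reducing everything to a lattice theta--series estimate. Since $\dist(\sigma_k,\phi_k)=\tfrac12\norm{\sigma_k-\phi_k}_1$ and $\abs{\lambda_j}=1$ exactly when $u_j=0$, those two lemmas give
\[
  \dist(\sigma_k,\phi_k)^2 \;\le\; \frac14\sum_{j\in\Z_n,\ u_j\neq 0}\abs{\lambda_j}^{2k}
  \;\le\; \frac14\sum_{j\in\Z_n,\ u_j\neq 0} e^{-k\norm{u_j}^2}.
\]
The map $j\mapsto u_j$ takes $\Z_n$ into $\Lambda$, and a short $\gcd$ computation shows it is exactly $d$--to--one with $d=\gcd(a,b,n)$: indeed $u_j=u_{j'}$ iff $(j-j')a\equiv(j-j')b\equiv 0\pmod n$, i.e.\ iff $\tfrac nd\mid j-j'$. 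Hence the last sum equals $d\sum_{0\neq v\in\Lambda\cap(-1/2,1/2]^2}e^{-k\norm v^2}$, which is at most $d\,\Theta$ with $\Theta:=\sum_{0\neq v\in\Lambda}e^{-k\norm v^2}$. Thus $\dist(\sigma_k,\phi_k)^2\le\tfrac d4\,\Theta$, and the problem is reduced to estimating the lattice theta sum $\Theta$. (If $a=b=0$ then every $u_j$ vanishes, so $\sigma_k=\phi_k$ and there is nothing to prove; from now on I assume $\minnorm$ is defined.)

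Since $\Theta$ can be enormous for small $k$, I first dispose of that range trivially. If $k\minnorm^2\le\log 11$, then $11\,d\,e^{-k\minnorm^2}\ge 11\,e^{-\log 11}=1\ge\dist(\sigma_k,\phi_k)^2$ because total variation distance never exceeds $1$, so the claimed bound holds outright. Hence I may assume $k\minnorm^2>\log 11$, and in particular $k\minnorm^2>1$.

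For the main estimate I use that $\minnorm$ is, by definition, the length of a shortest nonzero vector of the lattice $\Lambda$ (the remark recorded just before the proposition). Hence the open discs of radius $\minnorm/2$ about the points of $\Lambda$ are pairwise disjoint, and comparing areas gives, for the number $N(t)$ of \emph{nonzero} points of $\Lambda$ of norm at most $t$, the bound $N(t)\le(1+2t/\minnorm)^2-1$, which is $0$ for $t<\minnorm$ and at most $9t^2/\minnorm^2$ for $t\ge\minnorm$. Writing $e^{-k\norm v^2}=\int_{\norm v}^{\infty}2kt\,e^{-kt^2}\,dt$ and interchanging sum and integral,
\[
  \Theta=\int_0^{\infty}2kt\,e^{-kt^2}\,N(t)\,dt
  \;\le\;\frac{9}{\minnorm^2}\int_{\minnorm}^{\infty}2kt^3 e^{-kt^2}\,dt
  \;=\;9\Bigl(1+\frac{1}{k\minnorm^2}\Bigr)e^{-k\minnorm^2},
\]
where the last step is the substitution $s=kt^2$ together with $\int_a^{\infty}s\,e^{-s}\,ds=(a+1)e^{-a}$. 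Using $k\minnorm^2>1$ this is at most $18\,e^{-k\minnorm^2}$, whence $\dist(\sigma_k,\phi_k)^2\le\tfrac d4\,\Theta\le\tfrac{18}{4}\,d\,e^{-k\minnorm^2}\le 11\,d\,e^{-k\minnorm^2}$, as desired.

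The routine ingredients are the $d$--to--one count for $j\mapsto u_j$ and the identification of $\minnorm$ with the minimal distance of $\Lambda$, both already isolated in the text; the only place needing a little care is the lattice--point counting behind the theta estimate, but the target constant $11$ is loose enough that the crude packing bound comfortably suffices (a smaller constant would in fact come out). I do not anticipate a genuine obstacle here: the substance of the proposition lies in the two lemmas already established, and this step merely assembles them.
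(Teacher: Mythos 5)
Your proof is correct and follows the same overall strategy as the paper: combine \Cref{distance to limiting distribution} with \Cref{upper bound for eigenvalues}, exploit that $j\mapsto u_j$ is $\gcd(a,b,n)$-to-one, and then control the resulting lattice theta sum via a packing argument, splitting into the easy regime (here $k\minnorm^2\le\log 11$, in the paper $e^{-k\minnorm^2}>1/10$) and the main regime. The only variation is cosmetic: where the paper cites a discrete annulus-counting bound from \cite[Lemma 6]{nocutoff} ($9\sum_{i\ge1}(i+1)^2e^{-k\minnorm^2 i^2}$) and sums a geometric-type series, you derive the packing estimate $N(t)\le(1+2t/\minnorm)^2-1$ directly and evaluate the theta sum by Abel summation and the substitution $s=kt^2$, which is self-contained and in fact yields a sharper constant ($4.5d$ versus $11d$).
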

\begin{proof}
    Use \Cref{distance to limiting distribution} with the last lemma to bound\footnote{When $a,b = 0$, both sums are empty, and we have $\dist(\sigma_k, \phi_k) = 0$.} 
    \begin{equation}
        \label{eq:1}
        \norm{\sigma_k - \phi_k}_1^2 \leq 
        \sum_{|\lambda_j| < 1} \abs{\lambda_j}^{2k} \leq 
        \sum_{u_j \neq 0} \exp \left( -k\norm{u_j}^2 \right).
    \end{equation}
    Let us upper bound this sum in terms of the lattice $\Lambda$. Note that the union in the definition of $\Lambda$ might not be disjoint, since we might have $u_i = u_j$ for some $i,j$. This happens precisely when $n$ divides $a(j - i)$ and $b(j - i)$, which is equivalent to $n$ dividing $\gcd(a,b)(j - i)$, and this is the same as saying that $n/\gcd(a,b,n)$ divides $j-i$. Hence every value of $u_j$ occurs precisely $\gcd(a,b,n)$ times. We can thus bound the exponential sum in \eqref{eq:1} by
    \[
    \gcd(a,b,n) \sum_{u \in \Lambda \setminus \{ 0 \} } \exp \left( - k \norm{u}_2^2 \right).
    \]
    The sum of norms over the whole lattice $\Lambda$ can be upper bounded as in \cite[Lemma 6]{nocutoff} by
    \[
        9 \sum_{i \geq 1} (i + 1)^2 e^{-k \minnorm^2 i^2} \leq 
        18 \sum_{i \geq 1} (i + 1) e^{-k \minnorm^2 i}
        = 18 e^{-k \minnorm^2} \frac{2 - e^{- k \minnorm^2}}{(1 - e^{- k \minnorm^2})^2}.
    \]
    For $e^{-k \minnorm^2} \leq 1/10$, the value of the fraction is less than $12/5$, so we get the overall bound 
    \[
        \norm{\sigma_k - \phi_k}_1^2 \leq  44 \gcd(a,b,n) e^{-k \minnorm^2}. 
    \]
    For $e^{-k \minnorm^2} > 1/10$, the same bound holds since we always have $\norm{\sigma_k - \phi_k}_1^2 \leq 4$.
\end{proof}

The minimal norm $\minnorm$ can be estimated as follows.

\begin{lemma}
    \label[lemma]{lemma:minnorm_bound}
    We have
    \[
    \frac{\gcd(a,b,n)}{n} \leq \minnorm \leq \frac{2}{\sqrt{\pi}} \sqrt{\frac{\gcd(a, b, n)}{n}}.
    \]
\end{lemma}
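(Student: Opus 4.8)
The plan is to read off $\minnorm$ as the length of the shortest nonzero vector of the plane lattice $\Lambda$, and then to sandwich that length: the lower bound is a one-line divisibility observation, while the upper bound is Minkowski's convex body theorem, whose only nontrivial input is the covolume of $\Lambda$. (A cruder pigeonhole argument would also produce an upper bound of the right shape, but with $\sqrt2$ in place of the sharp constant $2/\sqrt\pi$, so it is really Minkowski that is wanted here.)

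For the lower bound, I would observe that each $u_j$ has both coordinates in $\tfrac1n\Z$: by definition $\langle ja/n\rangle$ differs from the rational $ja/n$ by an integer, and similarly for $\langle jb/n\rangle$. Hence if $u_j\neq 0$, at least one coordinate is a nonzero integer multiple of $1/n$, so $\norm{u_j}\geq 1/n$; minimizing over the $j$ with $u_j\neq0$ gives $\minnorm\geq 1/n$.

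For the upper bound, I would first record that $\Lambda=\Z^2+\Z\cdot(a/n,b/n)$ — a genuine lattice, being a finitely generated subgroup of $\R^2$ contained in the discrete set $\tfrac1n\Z^2$. It contains $\Z^2$, and the quotient $\Lambda/\Z^2$ is cyclic, generated by the class of $(a/n,b/n)$; its order is the least $j>0$ with $n\mid ja$ and $n\mid jb$. Using the elementary equivalence $n\mid ja$ and $n\mid jb$ $\iff$ $n\mid j\gcd(a,b)$ $\iff$ $(n/\gcd(a,b,n))\mid j$, this order is $n/\gcd(a,b,n)$, so $[\Lambda:\Z^2]=n/\gcd(a,b,n)$ and $\operatorname{covol}(\Lambda)=\gcd(a,b,n)/n$. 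Now apply Minkowski's theorem to the closed disk of radius $r=2\sqrt{\gcd(a,b,n)/(\pi n)}$, whose area is exactly $4\operatorname{covol}(\Lambda)$: it contains a nonzero $v\in\Lambda$ with $\norm{v}\leq r$. Assuming $(a,b)\not\equiv(0,0)\pmod n$ we have $\gcd(a,b,n)\leq n/2$, so $r\leq\sqrt{2/\pi}<1$ and thus $v\notin\Z^2\setminus\{0\}$; hence $v\in u_j+\Z^2$ for some $u_j\neq0$, and since $u_j$ is the norm-minimizing representative of that coset, $\minnorm\leq\norm{u_j}\leq\norm{v}\leq r$, which is the claimed bound.

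The argument is short, and the only part I expect to demand any care is the covolume computation — equivalently, pinning down the order of $(a/n,b/n)$ modulo $\Z^2$ — together with keeping straight that $\minnorm$ genuinely equals $\lambda_1(\Lambda)$ (which holds because each coordinate of $u_j$ has minimal absolute value in its class mod $1$, so $u_j$ is the shortest representative of $u_j+\Z^2$ and $\norm{u_j}\leq\sqrt2/2<1$ whenever $u_j\neq0$). The degenerate case $(a,b)\equiv(0,0)\pmod n$, in which $\minnorm$ would be a minimum over the empty set, should simply be excluded: it corresponds to a deterministic walk and is irrelevant in what follows.
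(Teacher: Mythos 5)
Your proof is correct, and it rests on the same two structural facts as the paper's: the lower bound is immediate from $\Lambda\subseteq\tfrac1n\Z^2$, and the upper bound comes from the density of $\Lambda$, equivalently $\operatorname{covol}(\Lambda)=\gcd(a,b,n)/n$. The only real difference is how the covolume is converted into a bound on the shortest vector. The paper packs disjoint open disks of radius $\minnorm/2$ around the lattice points inside a large box $[-m,m)^2$, compares areas, and sends $m\to\infty$ --- effectively an inline proof of the disk case of Minkowski's theorem. You instead cite Minkowski directly for the closed disk of area $4\operatorname{covol}(\Lambda)$. That route obliges you to bridge a gap the paper leaves implicit: Minkowski hands you a short nonzero $v\in\Lambda$, whereas $\minnorm$ is defined as a minimum over the particular representatives $u_j$, so one must check that $\minnorm$ coincides with $\lambda_1(\Lambda)$. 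You handle this correctly --- $u_j$ is the norm-minimizing element of its coset $u_j+\Z^2$, and the bound $r\le\sqrt{2/\pi}<1$ excludes $v\in\Z^2\setminus\{0\}$ --- while the paper simply identifies $\minnorm$ with the minimal distance between distinct points of $\Lambda$ without comment. You also explicitly exclude the degenerate case $(a,b)\equiv(0,0)\pmod n$, where $\minnorm$ is a minimum over the empty set; the paper tacitly ignores this, which is harmless since its applications always have $\gcd(a,b,n)$ small. In short: same idea, but your write-up invokes a named theorem rather than reproving it and is a touch more scrupulous about the identification $\minnorm=\lambda_1(\Lambda)$ and the edge case.
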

\begin{proof}
    The lattice $\Lambda$ is contained in $(\gcd(a,b,n)/n)\Z^2$, hence $\Delta \geq \gcd(a,b,n) / n$. For the upper bound, project $\R^2$ to the torus $\R^2/\Z^2$. Observe that the image of the lattice $\Lambda$ contains exactly $n / \gcd(a, b, n)$ points. Since the minimal distance between two distinct points of $\Lambda$ is $\minnorm$, the open discs of radius $\minnorm / 2$ in the torus $\R^2/\Z^2$ around these points are disjoint. Comparing areas, we thus obtain
    \[
    \frac{n}{\gcd(a, b, n)} \pi \left(\frac{\minnorm}{2}\right)^2 \leq 1.
    \]
    Rearranging terms gives the claimed upper bound.
\end{proof}

The bounds in the lemma are sharp with respect to $n$. If $a = 1$ and $b = 0$, we have $\gcd(a, b, n) = 1$ and $\minnorm = 1/n$, matching the lower bound. For the upper bound, consider $n = m^2$, $a = m$, and $b = 1$ for some $m \in \N$. If $\norm{u_j} < 1/m$ for some $j \in \Z_n$, then, since the second coordinate of $u_j$ is less than $1/m$ in absolute value, we must have $\abs{j} < m$. However, this implies $j = 0$, as the same condition holds for the first coordinate. Thus $u_j = 0$. This demonstrates that $\minnorm \geq 1/m = 1/\sqrt{n}$, and the upper bound is also sharp up to a constant factor.

\subsection{Randomly chosen parameters}

Suppose the parameters $a,b \in \Z_n$ of the random walk are chosen uniformly at random. We shall now show that in this case, $\gcd(a,b,n)$ is not large and $\minnorm$ is of order $1 / \sqrt{n}$ with high probability.

\begin{lemma}
    \label[lemma]{lemma:small_gcd}
    Let $a, b$ be uniformly random in $\Z_n$. Then for any $M > 1$, we have
    \[
        \Pr_{a, b}(\gcd(a, b, n) \geq M) < 1/(M - 1).
    \]
\end{lemma}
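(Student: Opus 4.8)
The plan is to bound the event by a union over divisors of $n$ and then compare with a telescoping series. First I would observe that if $\gcd(a,b,n) \geq M$, then the integer $d := \gcd(a,b,n)$ is itself a divisor of $n$ with $d \geq M$ that divides both $a$ and $b$; hence
\[
\{\gcd(a,b,n) \geq M\} \subseteq \bigcup_{d \mid n,\, d \geq M} \{ d \mid a \text{ and } d \mid b \}.
\]
For a fixed divisor $d$ of $n$, exactly $n/d$ of the $n$ residues in $\Z_n$ are divisible by $d$, so $\Pr_a(d \mid a) = 1/d$, and since $a$ and $b$ are independent, $\Pr_{a,b}(d \mid a \text{ and } d \mid b) = 1/d^2$.

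Applying the union bound then gives
\[
\Pr_{a,b}(\gcd(a,b,n) \geq M) \leq \sum_{d \mid n,\, d \geq M} \frac{1}{d^2} \leq \sum_{d \geq \lceil M \rceil} \frac{1}{d^2}.
\]
To finish, I would compare with a telescoping sum: since $M > 1$ we have $\lceil M \rceil \geq 2$, so each term satisfies $1/d^2 < 1/(d(d-1)) = 1/(d-1) - 1/d$, and summing from $d = \lceil M \rceil$ to $\infty$ collapses to $1/(\lceil M \rceil - 1) \leq 1/(M-1)$. If $n$ has no divisor $\geq M$, the sum over divisors is empty and the bound $0 < 1/(M-1)$ is immediate. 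Chaining these inequalities yields the claim.

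There is no substantive obstacle here; the argument is elementary. The only points needing a little care are the strictness of the final inequality — which is why one routes the estimate through $1/(d(d-1))$ rather than bounding $\sum 1/d^2$ directly by an integral — and the degenerate case in which the sum over divisors of $n$ is empty.
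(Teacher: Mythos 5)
Your proof is correct and follows essentially the same approach as the paper: a union bound over divisors $d \mid n$ with $d \geq M$, each contributing probability $1/d^2$, then a tail estimate on $\sum_{d \geq M} 1/d^2$. The only difference is cosmetic — you close via the telescoping bound $1/d^2 < 1/(d-1) - 1/d$, while the paper uses the equivalent integral comparison $\sum_{d \geq M} 1/d^2 < \int_{M-1}^{\infty} x^{-2}\,dx$.
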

\begin{proof}
    If $\gcd(a, b, n) \geq M$, then there is a $d \geq M$ that divides $a, b, n$. By the union bound, we thus get
    \[
        \Pr_{a, b}(\gcd(a, b, n) \geq M) \leq 
        \sum_{\substack{d |  n \\ d \geq M}} \Pr_{a,b}(a,b \in d \Z_n) < 
        \sum_{d \geq M} \frac{1}{d^2} < 
        \int_{M-1}^\infty \frac{1}{x^2} \, dx =
        \frac{1}{M-1}. \qedhere
    \]
\end{proof}

\begin{lemma}
    \label[lemma]{lemma:bound_gcd}
    Let $a, b \in \Z_n$ be uniformly random. For any $0 \neq j \in \Z_n$ and $r > 0$, we have
    \[
    \Pr_{a, b}\left(
        0 < \norm{u_j}  < r
        \right) < 8 r^2.
    \]
\end{lemma}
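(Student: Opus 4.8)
The plan is to make the law of $u_j$ completely explicit and then reduce the estimate to a one-dimensional lattice count. Write $d = \gcd(j,n)$ and $n' = n/d$. As $a$ ranges uniformly over $\Z_n$, the residue $ja \bmod n$ equals $d\cdot((j/d)a \bmod n')$, and since $a \bmod n'$ is equidistributed over $\Z_{n'}$ while multiplication by $j/d$ (a unit mod $n'$) permutes $\Z_{n'}$, the value $ja \bmod n$ is equidistributed over $d\Z_n = \{0, d, 2d, \dots, (n'-1)d\}$. Hence $\langle ja/n\rangle$ is uniform on the $n'$-element set $T = \{\langle \ell/n'\rangle : \ell \in \Z_{n'}\} = \tfrac{1}{n'}\Z \cap (-\tfrac12,\tfrac12]$, the same holds for $\langle jb/n\rangle$, and the two are independent because $a$ and $b$ are. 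In particular $\Pr_{a,b}(u_j = 0) = 1/(n')^2$.

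Since $\norm{u_j}^2 = \langle ja/n\rangle^2 + \langle jb/n\rangle^2$, the event $\{\norm{u_j} < r\}$ forces $|\langle ja/n\rangle| < r$ and $|\langle jb/n\rangle| < r$, so by independence
\[
\Pr_{a,b}(\norm{u_j} < r) \le \Pr_a\!\big(|\langle ja/n\rangle| < r\big)^2 .
\]
The number of elements of $T$ of absolute value less than $r$ is at most $2\lceil n'r\rceil - 1$ (the integers $\ell$ with $|\ell| < n'r$), so writing $k = \lceil n'r\rceil$ we get $\Pr_a(|\langle ja/n\rangle| < r) \le (2k-1)/n'$. Subtracting off the single pair giving $u_j = 0$ then yields
\[
\Pr_{a,b}(0 < \norm{u_j} < r) \le \frac{(2k-1)^2 - 1}{(n')^2} = \frac{4k(k-1)}{(n')^2}.
\]

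It remains to feed in $k = \lceil n'r\rceil$. If $r \le 1/n'$ then $k = 1$, the right-hand side vanishes, and $0 < 8r^2$. If $r > 1/n'$ then $k - 1 < n'r$, hence $2k - 1 < 2n'r + 1$, so
\[
\Pr_{a,b}(0 < \norm{u_j} < r) \le \Big(2r + \tfrac{1}{n'}\Big)^2 - \tfrac{1}{(n')^2} = 4r^2 + \tfrac{4r}{n'} < 8r^2,
\]
the last inequality using $1/n' < r$. I do not expect a genuine obstacle here; the two points needing care are the equidistribution step when $\gcd(j,n) > 1$ (dealt with by the substitution above) and the fact that it is precisely the removal of the $u_j = 0$ contribution that lowers the constant from $9$ to $8$, so that term must be subtracted explicitly rather than simply bounded away.
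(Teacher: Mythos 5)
Your proof is correct and takes essentially the same approach as the paper: identify the law of $u_j$ as uniform on the $n'\times n'$ lattice $\frac{1}{n'}\Z^2 \cap (-\tfrac12,\tfrac12]^2$, bound the number of lattice points in the box $\{\|\cdot\|_\infty < r\}$, explicitly subtract the origin, and split into the cases $r \lessgtr 1/n'$ to obtain the constant $8$. The only cosmetic difference is your use of $\lceil n'r\rceil$ where the paper uses $\lfloor rm\rfloor$; the algebra landing on $4r^2 + 4r/n' \le 8r^2$ is the same.
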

\begin{proof}
    Let $d = \gcd(n, j)$ and $m = n / d$. Let $L$ be the additive subgroup of $\R^2$ generated by $(d / n, 0)$ and $(0, d / n)$, and consider $\langle L \rangle = L \cap (-1/2, 1/2]^2$. For uniformly random $a,b$, the point $u_j$ is uniformly distributed on $\langle L \rangle$. Note that $\langle L \rangle$ consists of $m^2$ points and at most $(2\floor{rm} + 1)^2$ of them are of norm at most $r$. The probability that $0 < \norm{u_j} < r$ is thus bounded by $((2\floor{rm} + 1)^2 - 1)/m^2$. This is $0$ when $m < 1/r$, and for $m \geq 1/r$ it is at most $4r^2 + 4r / m \leq 8r^2$.
\end{proof}

\begin{lemma}
    \label[lemma]{lemma:upper bound for d}
    Let $a, b \in \Z_n$ be uniformly random. For any $\epsilon > 0$, we have
    \[
    \Pr_{a, b}\left(\minnorm < \epsilon / \sqrt{n}\right) < 8\epsilon^2.
    \]
\end{lemma}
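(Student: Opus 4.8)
The plan is to derive this statement as an immediate union bound from \Cref{lemma:bound_gcd}. Recall that $\minnorm = \min\{\norm{u_j} \mid j \in \Z_n,\ u_j \neq 0\}$, so the event $\minnorm < \epsilon/\sqrt{n}$ occurs exactly when there is some index $j$ — necessarily nonzero, since $u_0 = 0$ — with $0 < \norm{u_j} < \epsilon/\sqrt{n}$. The only tool needed is the bound $\Pr_{a,b}(0 < \norm{u_j} < r) < 8r^2$ for a fixed nonzero $j$, applied with $r = \epsilon/\sqrt{n}$.

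Concretely, the first step is to write
\[
    \Pr_{a,b}\left(\minnorm < \epsilon/\sqrt{n}\right)
    \leq \sum_{j \in \Z_n \setminus \{0\}} \Pr_{a,b}\left(0 < \norm{u_j} < \epsilon/\sqrt{n}\right).
\]
Next I would bound each summand by \Cref{lemma:bound_gcd} with $r = \epsilon/\sqrt{n}$, so that each term is at most $8r^2 = 8\epsilon^2/n$. Since the index set $\Z_n \setminus \{0\}$ has exactly $n - 1$ elements, summing gives
\[
    \Pr_{a,b}\left(\minnorm < \epsilon/\sqrt{n}\right) < (n-1)\cdot \frac{8\epsilon^2}{n} < 8\epsilon^2,
\]
which is the claim.

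I do not expect any real obstacle here: all of the geometric and combinatorial content is already contained in \Cref{lemma:bound_gcd}, and what remains is a one-line union bound. The only point worth a little care is the bookkeeping that the minimum defining $\minnorm$ ranges over nonzero indices only, so that the union bound has $n-1$ terms rather than $n$; the resulting factor $(n-1)/n < 1$ also makes the final inequality strict regardless of whether the bound from \Cref{lemma:bound_gcd} is strict. It is precisely this cancellation of the $1/n$ from $r^2 = \epsilon^2/n$ against the number of indices that yields a bound independent of $n$, which is what is needed later for the pre-cutoff statement at $k \approx p$.
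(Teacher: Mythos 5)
Your proof is correct and is essentially identical to the paper's: both take a union bound over nonzero $j$ using \Cref{lemma:bound_gcd} with $r = \epsilon/\sqrt{n}$. The paper states this in one line without spelling out the $(n-1)\cdot 8\epsilon^2/n < 8\epsilon^2$ arithmetic, which you correctly supply.
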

\begin{proof}
    The event $\minnorm < \epsilon/\sqrt{n}$ implies that we must have $0 < \norm{u_j} < \epsilon/\sqrt{n}$ for some $0 \neq j \in \Z_n$. The claim now follows by the union bound and the previous lemma.
\end{proof}

\begin{theorem}
    \label[theorem]{thm:upper_bound}
    For every $\epsilon, \delta > 0$ there is a constant $C$ such that the following holds. Let $a, b \in \Z_n$ be uniformly random. Then for all $k > Cn$, we have
    \[
    \Pr_{a, b}\left( \dist(\sigma_k, \phi_k) < \epsilon \right) > 1 - \delta.
    \]
\end{theorem}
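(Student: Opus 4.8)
The plan is to combine the deterministic upper bound from \Cref{prop:distance upper bound} with the two probabilistic estimates on $\gcd(a,b,n)$ and $\minnorm$ that we have just established, and then choose the constant $C$ to absorb everything. Concretely, I would first fix $\epsilon,\delta>0$ and split the ``bad'' event into two parts: the event $\mathcal B_1$ that $\gcd(a,b,n) \geq M$ for a suitable threshold $M$, and the event $\mathcal B_2$ that $\minnorm < \eta/\sqrt n$ for a suitable $\eta$. By \Cref{lemma:small_gcd} we have $\Pr(\mathcal B_1) < 1/(M-1)$, so taking $M = 1 + 2/\delta$ makes $\Pr(\mathcal B_1) < \delta/2$; by \Cref{lemma:upper bound for d} we have $\Pr(\mathcal B_2) < 8\eta^2$, so taking $\eta = \sqrt{\delta/16}$ makes $\Pr(\mathcal B_2) < \delta/2$. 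Hence off the bad event, which has probability less than $\delta$, we simultaneously have $\gcd(a,b,n) < M$ and $\minnorm \geq \eta/\sqrt n$, with $M$ and $\eta$ depending only on $\delta$.

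On the complement of $\mathcal B_1 \cup \mathcal B_2$, I would plug these bounds into \Cref{prop:distance upper bound}:
\[
\dist(\sigma_k,\phi_k)^2 \leq 11\gcd(a,b,n)\, e^{-k\minnorm^2} < 11 M \, e^{-k \eta^2 / n}.
\]
Now it suffices to make the right-hand side smaller than $\epsilon^2$. Writing $k = Cn$, the exponent becomes $-C\eta^2$, so the bound reads $11 M e^{-C\eta^2} < \epsilon^2$, which holds as soon as $C > \eta^{-2}\log(11 M/\epsilon^2)$. Since $M$ and $\eta$ depend only on $\delta$, this prescribes a constant $C = C(\epsilon,\delta)$, and for every $k > Cn$ the inequality $\dist(\sigma_k,\phi_k) < \epsilon$ holds on an event of probability greater than $1-\delta$, as claimed.

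I do not expect any serious obstacle here: the theorem is essentially an assembly step, and all the analytic work has already been done in \Cref{prop:distance upper bound}, \Cref{lemma:small_gcd}, and \Cref{lemma:upper bound for d}. The only minor point to be careful about is the order of quantifiers — $M$ and $\eta$ must be chosen as functions of $\delta$ alone (before $k$ or $C$ enter the picture), and only then is $C$ chosen in terms of both $\epsilon$ and $\delta$ — but this is exactly what the computation above does. One could also note that $\sigma_k$ here is the distribution of the walk on $\Z_n$, and the passage back to the random Lie bracket distribution $\mu_{2k}$ (needed for \Cref{thm:pre-cutoff}) is handled separately via \Cref{cor:probability_of_events}, so it does not concern us in this statement.
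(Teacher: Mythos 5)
Your proposal is correct and matches the paper's proof essentially line for line: the paper also applies \Cref{lemma:small_gcd} with threshold $1 + 2/\delta$ and \Cref{lemma:upper bound for d} with $\eta^2 = \delta/16$ to get the two $\delta/2$ bad events, then feeds $\gcd(a,b,n) < 3/\delta$ and $\minnorm^2 \geq \delta/(16n)$ into \Cref{prop:distance upper bound} and chooses $C$ large enough to drive the resulting bound below $\epsilon^2$. Your explicit remark about the order of quantifiers ($M,\eta$ depend only on $\delta$, then $C$ on $\epsilon,\delta$) is a useful clarification but not a departure from the paper's argument.
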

\begin{proof}
    With probability larger than $1 - \delta$ we have, by \Cref{lemma:small_gcd} and \Cref{lemma:upper bound for d}, that both $\gcd(a, b, n) < 1 + 2 / \delta < 3/\delta$ and $\minnorm^2 \geq \delta / 16n$ hold. Using these with \Cref{prop:distance upper bound} gives
    \[
    \dist(\sigma_k, \phi_k)^2 \leq \frac{33}{\delta} \exp \left( -\delta k/16n \right).
    \]
    For $k > Cn$, this is less than $\epsilon^2$, completing the proof.
\end{proof}

\begin{corollary}
    \label[corollary]{cor:upper_bound_tv}
    For every $\epsilon, \delta > 0$ there is a constant $C$ such that the following holds. Let $A, B \in \slfrak_2(\F_p)$ be uniformly random. Then for all $k > C p$, we have
    \[
    \Pr_{A, B}\left( \dist(\mu_{2k}, \nu_{2k}) < \epsilon \right) > 1 - \delta.
    \]
\end{corollary}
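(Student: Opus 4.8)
The plan is to deduce this corollary from \Cref{thm:upper_bound} by transporting the statement about the random walk on $\Z_n$ to the random Lie bracket, paying the $O(1/p)$ transfer cost quantified in \Cref{cor:probability_of_events}.

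First I would recall the dictionary set up in \Cref{sec:translating_to_cyclic_group} and at the beginning of \Cref{sec:precutoff}: on the event $\alpha\beta\gamma \neq 0$, the distribution $\mu_{2k}$ of the random Lie bracket after $2k$ steps, read off on $\lin\{[A,B]\}\setminus\{0\} \cong \F_p^\ast \cong \Z_n$ via a fixed $\log$, is exactly the distribution $\sigma_k$ of the associated walk on $\Z_n$, and $\nu_{2k}$ corresponds to $\phi_k$ under the same identification. Since total variation distance is invariant under a bijective relabelling of the underlying set, $\dist(\mu_{2k},\nu_{2k}) = \dist(\sigma_k,\phi_k)$ on this event. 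Next I would observe that, for a fixed number of steps $k$, the quantity $\dist(\sigma_k,\phi_k) = \tfrac12\norm{\sigma_k - \phi_k}_1$ depends only on $a = \alpha' - \beta'$ and $b = \alpha' - \gamma'$: writing $\lambda_j = \chi_j(-\alpha')\bigl(\tfrac12 + \tfrac14\chi_j(a) + \tfrac14\chi_j(b)\bigr)$, the residual part $\sigma_k - \phi_k = \frac1n\sum_{|\lambda_j|<1}\lambda_j^k\chi_j$ equals a translate by $k\alpha'$ of a function determined by $a,b,k$ alone, and translation preserves the $L^1$ norm (the index set $\{j : |\lambda_j| < 1\}$ likewise depends only on $a,b$). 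Consequently, for each fixed $k$, the event $E = \{\dist(\mu_{2k},\nu_{2k}) < \epsilon\}$ intersected with $\{\alpha\beta\gamma\neq 0\}$ is equivalent, under $a = \log(\alpha/\beta)$ and $b = \log(\alpha/\gamma)$, to the event $F = \{\dist(\sigma_k,\phi_k) < \epsilon\}$ in the variables $a,b\in\Z_n$, so \Cref{cor:probability_of_events} applies.

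Then I would apply \Cref{thm:upper_bound} with $\delta$ replaced by $\delta/2$ to obtain a constant $C'$ such that $\Pr_{a,b}(\dist(\sigma_k,\phi_k) < \epsilon) > 1 - \delta/2$ for all $k > C'n$, and transfer via \Cref{cor:probability_of_events} to get $\Pr_{A,B}(E) > \Pr_{a,b}(F) - 11/p > 1 - \delta/2 - 11/p$. Taking $C = \max\{C',\, 22/\delta\}$ and restricting to $p > C$ makes $11/p < \delta/2$, and $k > Cp$ forces $k > C'(p-1) = C'n$; hence $\Pr_{A,B}(\dist(\mu_{2k},\nu_{2k}) < \epsilon) > 1 - \delta$, as required. (If one insists on a statement valid for every prime $p$, the finitely many $p \leq C$ are absorbed by enlarging $C$, since on the bounded group $\Z_{p-1}$ the walk mixes in $O(1)$ steps.)

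There is essentially no genuine obstacle here; the only points that need care are exactly the two used above — that the bijective identification of the line with $\Z_n$ preserves total variation distance, so that $\dist(\mu_{2k},\nu_{2k})$ is precisely the quantity controlled by \Cref{thm:upper_bound}, and that $\dist(\sigma_k,\phi_k)$ is a function of $a,b$ alone, which is what makes $F$ a legitimate event in the sense of \Cref{cor:probability_of_events} — together with the bookkeeping that the degenerate case $\alpha\beta\gamma = 0$ is harmless because it has already been folded into the $11/p$ error term of that corollary.
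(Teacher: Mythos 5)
Your proof is correct and follows the paper's argument exactly: translate the event through the $a,b$ dictionary, invoke \Cref{thm:upper_bound} with $\delta/2$, and transfer via \Cref{cor:probability_of_events}, absorbing the $11/p$ error by taking $p$ large. The paper leaves implicit the observation you spell out — that $\dist(\sigma_k,\phi_k)$ is a function of $a,b$ alone because $\omega_k$ is a translate by $k\alpha'$ of a function of $a,b$ — but this is a minor elaboration, not a different route.
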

\begin{proof}
    Take $A, B \in \slfrak_2(\F_p)$ such that $G_{A,B}$ has nonzero entries. Then the random Lie bracket satisfies $\dist(\mu_{2k}, \nu_{2k}) < \epsilon$ if and only if the parameters $(a, b) = F(A, B)$ satisfy $\dist(\sigma_k, \phi_k) < \epsilon$. By the previous theorem, we have $\P_{a,b}(\dist(\sigma_k, \phi_k) < \epsilon) > 1 - \delta/2$ for $k > Cp$. It now follows from \Cref{cor:pushforward_vs_uniform} that $\Pr_{A,B}(\dist(\mu_{2k}, \nu_{2k}) < \epsilon) > 1 - \delta$.\footnote{We can assume $p$ is large enough so that $15/p < \delta/2$.}
\end{proof}

\subsection{Lower bound in \texorpdfstring{$L^1$}{L1}-distance}
We now prove the corresponding lower bound on the distance between $\sigma_k$ and $\nu_k$. Using Fourier analysis, we can easily obtain a lower bound $1 - \epsilon$ on the \emph{$L^1$-distance} (but not on the total variation distance, this will be done in the following section) in terms of the vectors $u_j$ and minimal norm $\minnorm$ as in the previous section. We tightly follow the argument in \cite{nocutoff}.

\begin{lemma}[Lemma 4 in \cite{nocutoff}]
    \label[lemma]{lemma:eigenvalue_lower_bound}
    If $\norm{u_j} \leq 1 / 2\pi$, then
    \[
        \abs{\lambda_j} \geq \exp\left(-\pi^2 \norm{u_j}^2\right).
    \]
\end{lemma}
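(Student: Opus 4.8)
The plan is to mimic the proof of the upper bound \Cref{upper bound for eigenvalues}, but now pushing the triangle inequality in the \emph{reverse} direction. Recall that
\[
    \abs{\lambda_j} = \abs{\frac12 + \frac14 \chi_j(a) + \frac14 \chi_j(b)},
\]
where $\chi_j(a) = \omega^{ja}$ has argument $2\pi \langle ja/n \rangle$ and similarly for $b$. First I would write $\chi_j(a) = e^{2\pi \imag \langle ja/n \rangle}$ and $\chi_j(b) = e^{2\pi \imag \langle jb/n \rangle}$ and note that, since the whole expression is a convex combination of three unit complex numbers with the anchor $1$ carrying weight $1/2$, its modulus is at least the real part of $e^{-\imag\theta} \lambda_j$ for a suitable small rotation, or more simply is bounded below by $\frac12 + \frac14\cos(2\pi\langle ja/n\rangle) + \frac14\cos(2\pi\langle jb/n\rangle) - (\text{imaginary part})$-type correction. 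The cleanest route, which is exactly what \cite{nocutoff} does, is to bound $\abs{\lambda_j}$ from below by $\real \lambda_j$ is not quite enough because of the sine terms; instead one bounds $\abs{\lambda_j}^2 = (\real\lambda_j)^2 + (\imag\lambda_j)^2 \geq (\real\lambda_j)^2$ and then shows $\real\lambda_j = \frac12 + \frac14\cos(2\pi x) + \frac14\cos(2\pi y) \geq \exp(-\pi^2(x^2+y^2))$ where $x = \langle ja/n\rangle$, $y = \langle jb/n\rangle$, using the elementary inequality $\frac12 + \frac12\cos(2\pi t) = \cos^2(\pi t) \geq \exp(-\pi^2 t^2)$ valid on a suitable range together with $\norm{u_j}\le 1/2\pi$ guaranteeing $\abs{x},\abs{y}$ are small enough.

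Concretely, the key elementary step is: for $\abs{t} \le 1/2$ one has $\cos^2(\pi t) \ge e^{-\pi^2 t^2}$; this follows from $\log\cos(\pi t) \ge -\tfrac12 \pi^2 t^2$ which can be checked by comparing Taylor series or by a convexity/monotonicity argument on $[0,1/2]$. Then
\[
    \real \lambda_j = \tfrac12\bigl(\cos^2(\pi x) + \cos^2(\pi y)\bigr) \ge \tfrac12\bigl(e^{-\pi^2 x^2} + e^{-\pi^2 y^2}\bigr) \ge e^{-\pi^2(x^2+y^2)/?}
\]
— here one has to be slightly careful: the arithmetic mean of $e^{-\pi^2 x^2}$ and $e^{-\pi^2 y^2}$ is at least $e^{-\pi^2(x^2+y^2)}$? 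That would need $\tfrac12(e^{u}+e^{v}) \ge e^{(u+v)/2} \ge e^{u+v}$ when $u+v \le 0$, and $\tfrac12(e^u+e^v)\ge e^{(u+v)/2}$ is AM–GM, while $e^{(u+v)/2} \ge e^{u+v}$ holds exactly when $u+v \le 0$, i.e. always here since $u,v\le 0$. So $\real\lambda_j \ge e^{-\pi^2(x^2+y^2)} = e^{-\pi^2\norm{u_j}^2}$, giving $\abs{\lambda_j} \ge \real\lambda_j \ge e^{-\pi^2\norm{u_j}^2}$ — but wait, $\real\lambda_j$ could be smaller than $\abs{\lambda_j}$, so this direction is fine. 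The only place the hypothesis $\norm{u_j} \le 1/2\pi$ enters is to ensure $\abs{x}, \abs{y} \le 1/2$ (indeed $\le 1/2\pi < 1/2$) so that the cosine inequality applies and so that $\real\lambda_j > 0$, making $\abs{\lambda_j}\ge \real\lambda_j$ a genuine lower bound.

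I expect the main obstacle to be purely bookkeeping: verifying the scalar inequality $\cos^2(\pi t) \ge e^{-\pi^2 t^2}$ on the relevant interval and confirming the range of $t$ for which it is needed, together with making sure the passage from $\abs{x}+\abs{y}$ versus $\sqrt{x^2+y^2}$ goes the right way (here we want $x^2 + y^2 = \norm{u_j}^2$ directly, which is automatic, so no loss is incurred, unlike in the upper bound where Jensen was used in the opposite direction). Since the statement is attributed to \cite{nocutoff} as Lemma 4, I would simply reproduce their short argument in this notation, and the proof should be three or four lines.
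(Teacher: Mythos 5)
Your overall strategy matches the paper's: lower bound $\abs{\lambda_j}$ by its real part, rewrite in terms of cosines, and apply a cosine-versus-exponential inequality together with convexity. However, the scalar inequality you rest on, $\cos^2(\pi t)\ge e^{-\pi^2 t^2}$, is false for every $t\ne 0$ in the relevant range. Indeed
\[
\log\cos(x)=-\tfrac{x^2}{2}-\tfrac{x^4}{12}-\cdots<-\tfrac{x^2}{2},
\]
so $\cos^2(x)<e^{-x^2}$; concretely at $t=0.1$ one has $\cos^2(\pi t)\approx 0.9045$ while $e^{-\pi^2 t^2}\approx 0.9060$. The inequality the paper actually uses is $\cos(x)\ge e^{-x^2}$ for $\abs{x}\le 1$, which after replacing $x$ by $\pi t$ and squaring becomes $\cos^2(\pi t)\ge e^{-2\pi^2 t^2}$ --- note the factor $2$. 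With this corrected input your computation closes cleanly: by AM--GM,
\[
\real\lambda_j=\tfrac12\bigl(\cos^2(\pi x)+\cos^2(\pi y)\bigr)\ge \tfrac12\bigl(e^{-2\pi^2 x^2}+e^{-2\pi^2 y^2}\bigr)\ge e^{-\pi^2(x^2+y^2)}=e^{-\pi^2\norm{u_j}^2},
\]
and the hypothesis $\norm{u_j}\le 1/2\pi$ guarantees $\abs{\pi x},\abs{\pi y}\le 1/2<1$, as required. The extra step in your draft passing from $e^{(u+v)/2}$ to $e^{u+v}$ is then unnecessary; it was only there to silently absorb the factor of $2$ you had dropped by stating the inequality too strongly. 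Once the scalar inequality is repaired, your $\cos^2$-plus-AM--GM route and the paper's Jensen-on-the-three-term-average route are equivalent reorganizations of the same argument.
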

\begin{proof}
    We have 
    \[
        \abs{\lambda_j} \geq 
        \real \left( \frac12 + \frac14 \chi_j(a) + \frac14 \chi_j(b) \right) = 
        \frac{1}{2} + \frac{1}{4}\cos\left(2\pi \langle ja/n\rangle\right) + \frac{1}{4}\cos\left(2\pi \langle jb/n\rangle\right).
    \]
    This can be lower bounded using $\cos x \geq \exp(-x^2)$, which holds for all $\abs{x} \leq 1$, by
    \[
        \frac{1}{2} + \frac{1}{4}\exp\left(-4\pi^2 \langle ja/n\rangle^2\right) + \frac{1}{4}\exp\left(-4\pi^2 \langle jb/n\rangle^2\right),
    \]
    and applying Jensen's inequality for the convex function $x \mapsto \exp(-x)$ gives 
    \[
        \abs{\lambda_j} \geq \exp\left(-\pi^2 \langle ja/n\rangle^2 - \pi^2 \langle jb/n\rangle^2\right) = \exp\left(-\pi^2 \norm{u_j}^2\right). \qedhere
    \]
\end{proof}

\begin{theorem}
    \label[theorem]{thm:lower_bound}
    For every $\epsilon, \delta > 0$ there are constants $C, c > 0$ such that the following holds. Let $n > C$ and let $a, b \in \Z_n$ be uniformly random. For all $k < cn$, we have
    \[
    \Pr_{a, b}\left(\dist(\sigma_k, \phi_k) > 1/2 - \epsilon \right) > 1 - \delta.
    \]
\end{theorem}
\begin{proof}
    With probability at least $1 - \delta$, we have $\gcd(a, b, n) \leq 1 + 1 / \delta$ by \Cref{lemma:small_gcd}. Let $C$ be such that
    \[
        2\sqrt{\frac{1 + 1 / \delta}{\pi C}} \leq \frac{1}{2 \pi}.
    \]
    As long as $n \geq C$, we thus have $\Delta \leq 1 / 2\pi$ by \Cref{lemma:minnorm_bound}. Let $u_j$ be the vector with $\Delta = \norm{u_j}$. We have $\abs{\lambda_j} \neq 1$ since $\norm{u_j} \neq 0$, and so \Cref{lemma:eigenvalue_lower_bound} gives $\rho \geq \abs{\lambda_j} \geq \exp(-\pi^2 \Delta^2)$. Hence
    \[
        \dist(\sigma_k, \phi_k) \geq \rho^k / 2 \geq \exp(-4\pi (1 + 1 / \delta)k / n) / 2
    \]
    by \Cref{distance to limiting distribution}. For $k < cn$, this is more than $1/2 - \epsilon$, completing the proof.
\end{proof}

\subsection{Lower bound in total variation distance}

Let us now prove the corresponding stronger lower bound of $1 - \epsilon$ in \emph{total variation distance}. We do this by refining the argument in \cite[Theorem 2]{hildebrand2005survey} (itself a modification of \cite[Theorem 6.1.1]{greenhalgh1989random}), which proves a bound of $1/2 - \epsilon$ in total variation distance (so as strong as the one in the previous section).

\begin{theorem}
    \label[theorem]{thm:lower_bound_tv}
    For every $\epsilon, \delta > 0$ there are constants $C, c > 0$ such that the following holds. Let $n > C$ and let $a, b \in \Z_n$ be uniformly random. For all $k < cn$, we have
    \[
    \Pr_{a, b}\left(\dist(\sigma_k, \phi_k) > 1 - \epsilon \right) > 1 - \delta.
    \]
\end{theorem}
\begin{proof}
    Let $\lambda > 0$ be a parameter to be determined later. Let $C, c > 0$ be such that
    \[
        \left(1 + 1 / \delta\right) \left( 2\lambda\sqrt{c} + 1/\sqrt{C} \right)^2
        < \epsilon.
    \]
    Assume $n > C$ and $k < cn$. We will exhibit a set $S \subseteq \Z_n$ that satisfies $\phi_k(S) < \epsilon$ and $\sigma_k(S) > 1 - 2\epsilon$, which in turn gives $\dist(\sigma_k, \phi_k) > 1 - 3\epsilon$. Let $\phi \colon \Z^3 \to \Z_n$ be the homomorphism defined by $\phi(1, 0, 0) = \alpha', \phi(0, 1, 0) = \beta', \phi(0, 0, 1) = \gamma'$, and let
    \[
        T = \left\{(k - x - y, x, y) \in \Z^3 \mid \abs{x - k/4}, \abs{y - k/4} \leq \lambda\sqrt{k}\right\}
        \quad \text{and} \quad
        S = \phi\left(T\right).
    \]
    The distribution $\phi_k$ is uniform on a subset of size $n / \gcd(a, b, n)$ by \Cref{lemma:distribution_nu}. With probability at least $1 - \delta$, we have $\gcd(a, b, n) \leq 1 + 1 / \delta$ by \Cref{lemma:small_gcd}, and hence
    \[
        \phi_k(S) \leq
        \abs{S}\gcd(a, b, n) / n \leq \left(1 + 1 / \delta\right)\left(2\lambda\sqrt{k} + 1\right)^2 / n 
        < \epsilon
    \]
    by the assumption on $n,k$. On the other hand, let $X, Y$ be the number of times the generators $\beta',\gamma'$ are chosen in the random walk. Note that $X,Y$ are distributed as a sum of $k$ independent Bernoulli random variables with success probability $1/4$. Thus
    \[
        \sigma_k(S) \geq 
        \Pr\left(\abs{X - k/4} \leq \lambda\sqrt{k}, \ \abs{Y - k/4} \leq \lambda\sqrt{k} \right)
        \geq 1 - 2 \Pr\left(\abs{X - k/4} > \lambda\sqrt{k}\right).
    \]
    By Hoeffding's inequality, the last probability is at most $2\exp\left(-2\lambda^2 \right)$. Let $\lambda$ be such that this is less than $\epsilon$. Then $\sigma_k(S) > 1 - 2\epsilon$, as required.
\end{proof}

\begin{corollary}
    \label[corollary]{cor:lower_bound_tv}
    For every $\epsilon, \delta > 0$ there are constants $C, c > 0$ such that the following holds. Let $p > C$ and let $A, B \in \slfrak_2(\F_p)$ be uniformly random. For all $k < cp$, we have
    \[
        \Pr_{A, B}\left( \dist(\mu_{2k}, \nu_{2k}) > 1 - \epsilon \right) > 1 - \delta.
    \]
\end{corollary}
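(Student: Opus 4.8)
The plan is to follow the same template as the proof of \Cref{cor:upper_bound_tv}, pulling \Cref{thm:lower_bound_tv} back from the cyclic group $\Z_n$ to $\slfrak_2(\F_p)$ through \Cref{cor:probability_of_events}. First I would record that, conditioned on $\alpha\beta\gamma \neq 0$, the distribution $\mu_{2k}$ of the random Lie bracket on the line $\lin\{[A,B]\}$ is, after fixing the isomorphism $\log\colon \F_p^\ast \to \Z_n$, exactly the distribution $\sigma_k$ of the associated walk on $\Z_n$ with steps $\alpha',\beta',\gamma'$, and $\nu_{2k}$ corresponds to $\phi_k$. Hence the event
\[
    \bigl(\dist(\mu_{2k}, \nu_{2k}) > 1 - \epsilon\bigr) \cap (\alpha\beta\gamma \neq 0)
\]
in the parameters $A,B$ becomes, under $a = \log(\alpha/\beta)$ and $b = \log(\alpha/\gamma)$, the event $\bigl(\dist(\sigma_k, \phi_k) > 1 - \epsilon\bigr)$ in the parameters $a,b$.

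Next I would apply \Cref{thm:lower_bound_tv} with $\delta/2$ in place of $\delta$: it provides constants $C_0, c_0 > 0$ so that for $n > C_0$ and $k < c_0 n$ one has $\P_{a,b}\bigl(\dist(\sigma_k, \phi_k) > 1 - \epsilon\bigr) > 1 - \delta/2$. Since $n = p - 1 \geq p/2$, taking $c = c_0/2$ ensures that $k < cp$ implies $k < c_0 n$, so the same lower bound holds for all $k < cp$ as soon as $p$ is large enough that $p-1 > C_0$.

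Finally, I would invoke \Cref{cor:probability_of_events} with $E = \bigl(\dist(\mu_{2k}, \nu_{2k}) > 1 - \epsilon\bigr)$ and the equivalent event $F = \bigl(\dist(\sigma_k, \phi_k) > 1 - \epsilon\bigr)$ to obtain $\abs{\P_{A,B}(E) - \P_{a,b}(F)} < 11/p$, whence $\P_{A,B}(E) > 1 - \delta/2 - 11/p$. Enlarging $C$ so that $11/p < \delta/2$ for all $p > C$ finishes the proof. I do not expect any genuine obstacle: the only bookkeeping points are the harmless $n$-versus-$p$ adjustment in the ``$k < cp$'' claim and the footnote convention $\phi_k = 0$ when $\alpha\beta\gamma = 0$, but the latter lies inside a set of probability at most $9/p$ by \Cref{cor:probability_of_nonzero_entries} and is already absorbed by the $11/p$ slack in \Cref{cor:probability_of_events}.
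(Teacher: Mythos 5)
Your proposal is correct and follows exactly the paper's own route: translate the event to the $(a,b)$ parameters via the isomorphism, apply \Cref{thm:lower_bound_tv} with $\delta/2$, and transfer back with \Cref{cor:probability_of_events} after enlarging $C$. The only addition you make is the explicit $n$-versus-$p$ adjustment ($n = p-1 \geq p/2$), which the paper silently absorbs into the constant $c$.
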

\begin{proof}
    For matrices $A, B \in \slfrak_2(\F_p)$ such that $G_{A,B}$ has nonzero entries, the random Lie bracket satisfies $\dist(\mu_{2k}, \nu_{2k}) > 1 - \epsilon$ if and only if the parameters $(a, b) = F(A, B)$ satisfy $\dist(\sigma_k, \phi_k) > 1 - \epsilon$. By the previous theorem, we have $\P_{a,b}(\dist(\sigma_k, \phi_k) > 1 - \epsilon) > 1 - \delta/2$ for $k < cp$. We can assume $C$ is large enough so that $15/C < \delta/2$, so \Cref{cor:pushforward_vs_uniform} implies that $\Pr_{A, B}(\dist(\mu_{2k}, \nu_{2k}) < 1 - \epsilon) > 1 - \delta$.
\end{proof}

Corrollaries \ref{cor:upper_bound_tv} and \ref{cor:lower_bound_tv} together prove \Cref{thm:pre-cutoff}.

\section{Diameter}
\label[section]{diameter section}

Here, we prove the diameter bound of $O(\log p)$ for the random Lie bracket in $\slfrak_2(\F_p)$, as stated in \Cref{thm:diameter}. The key point behind the argument is based on the analysis of the affine random walk on $\F_p$ evolving according to $x_{n+1} = a x_n + b_n$ for fixed $a \in \F_p^\ast$ and i.i.d. random $b \in \F_p$. This walk has been studied many times in the literature, the starting point being \cite{chung1987random}. The most relevant for us are the strong results of \cite{breuillard2022cut} which exhibit rapid mixing of the walk for almost all values of $a$. In order to explain this more precisely, let, for any $a \in \F_p$ and $n \in \N$,
\[
    P_n(a) = \left\{\sum_{i=0}^{n} b_i a^i \mid b_i \in \{-1, 0, 1\} \right\} \subseteq \F_p
\]
be the set of all polynomials in $a$ of degree at most $n$ with coefficients in $\{-1, 0, 1\}$.

\begin{theorem}[Proposition 13 together with Theorem 2 in \cite{breuillard2022cut}]
    \label[theorem]{breuillard varju field diameter}
    There is a (possibly empty) small set of primes $T$ with the following property. For any $\epsilon > 0$ there is a $C$ such that for all primes $p > C$ not in $T$ and at least $(1 - \epsilon)p$ values of $a \in \F_p$, we have $P_n(a) = \F_p$ for some $n \leq 10\log p$.
\end{theorem}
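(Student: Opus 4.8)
The plan is to derive this statement by combining the two cited results of \cite{breuillard2022cut}; the real content is theirs, and what remains is to align their formulations with the one stated here. First I would record the dictionary: $P_n(a)$ is exactly the support of the Chung--Diaconis--Graham random process $x_{i+1} = a x_i + b_i$ started at $x_0 = 0$ with the $b_i$ independent and uniform in $\{-1,0,1\}$, taken after $n+1$ steps, since unwinding the recursion gives $x_{n+1} = \sum_{i=0}^{n} b_i a^{n-i}$, and this ranges over $P_n(a)$ as the $b_i$ vary. Thus a statement about the support of this walk after $O(\log p)$ steps is exactly what we want to extract.

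Next I would invoke Theorem 2 of \cite{breuillard2022cut}, the cut-off result for this walk: outside a small set of primes $T_1$, and for all but an $\epsilon_1$-fraction of multipliers $a \in \F_p$, the total variation distance between the law of $x_m$ and the uniform distribution on $\F_p$ is below $\epsilon_1$ once $m \geq C_1(\epsilon_1) \log p$. Since the stationary measure is uniform, and the uniform mass of the complement of the support is a lower bound for the total variation distance, this already forces $\abs{P_m(a)} \geq (1-\epsilon_1)p$ at $m = C_1 \log p$. I would then use Proposition 13 of \cite{breuillard2022cut} as the complementary combinatorial input (in whichever order the paper actually arranges them): it promotes a support of positive density to the full group $\F_p$ after $O(\log p)$ further steps, again away from a small set of primes $T_2$ and for all but an $\epsilon_2$-fraction of $a$. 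The mechanism is that $P_{n+1}(a) = \{-1,0,1\} + a\,P_n(a)$, so the support can fail to be everything only if the complement of $a\,P_n(a)$ contains three consecutive residues, and propagating this constraint forces $a$ to satisfy a low-height polynomial relation modulo $p$ — which is rare, and which is also where the exceptional prime set enters.

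Then I would assemble the two ranges of steps: $C_1 \log p$ steps to reach density $1-\epsilon_1$, then a further $C_2 \log p$ steps to reach all of $\F_p$, valid for every prime $p \notin T := T_1 \cup T_2$ and every $a$ outside an exceptional set of size at most $(\epsilon_1 + \epsilon_2)p$. Taking $\epsilon_1 + \epsilon_2 = \epsilon$ and $n = (C_1+C_2)\log p$ yields $P_n(a) = \F_p$ for at least $(1-\epsilon)p$ values of $a$ once $p$ exceeds a suitable threshold $C$. The set $T$ is small because a finite (indeed even a summable) union of small sets is small. Finally I would track the explicit constants in \cite{breuillard2022cut} to check that $C_1 + C_2 \leq 10$ can be arranged; the trivial bound $3^{n+1}\geq p$, i.e.\ $n \geq \log_3 p \approx 0.91\log p$, shows there is genuine slack, and in the worst case one simply replaces $10\log p$ by the honest constant everywhere it is used in \Cref{thm:diameter}.

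The main obstacle is bookkeeping rather than mathematics. The cited paper states its conclusions in terms of mixing times and $L^2$ (or total variation) distances to uniformity rather than literal support, so the delicate points are: (i) extracting ``$\abs{P_m(a)}$ is within $\epsilon p$ of $p$'' from a distance-to-uniform bound, which is routine; (ii) confirming that the ``for most $a$'' quantifier in their theorems is uniform in $p$ in exactly the sense needed here; and (iii) pinning down the numerical constant to justify the specific bound $10\log p$. None of these introduces a new idea, but each must be checked carefully against the precise statements of Proposition 13 and Theorem 2 in \cite{breuillard2022cut}.
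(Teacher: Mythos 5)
The statement you are asked to prove is, in the paper, a cited theorem: the authors attribute it entirely to Proposition~13 and Theorem~2 of \cite{breuillard2022cut} and give no proof of their own. There is therefore no in-paper argument to compare against. What can be assessed is whether your proposed assembly of the two cited results is a plausible reconstruction, and whether your translation between the stated form and the random-walk form in \cite{breuillard2022cut} is sound.

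Your ``dictionary'' is correct and is the essential observation: unwinding the recursion $x_{i+1} = ax_i + b_i$ with $x_0 = 0$ gives $x_{n+1} = \sum_{i=0}^n a^{n-i}b_i$, so the support of the walk after $n+1$ steps, with $b_i$ uniform in $\{-1,0,1\}$, is exactly $P_n(a)$. Thus the covering statement $P_n(a) = \F_p$ is a statement about the support of the Chung--Diaconis--Graham process, and extracting it from a distance-to-uniform bound plus a support-growth argument is a reasonable route. Your observation that a TV-distance bound of $\epsilon$ forces $\abs{P_m(a)} \geq (1-\epsilon)p$ is correct, and the check that $10\log p$ dominates the trivial lower bound $\log_3 p$ for the covering time is a good sanity check.

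Two caveats are worth recording. First, you assign specific roles to ``Theorem~2'' (mixing, yielding positive density of support) and ``Proposition~13'' (promoting positive density to full coverage), but you frame this attribution as a guess, and without the exact statements from \cite{breuillard2022cut} neither you nor the paper confirms this division of labor; it is also possible that one of the two results already delivers the covering bound directly while the other supplies the quantifier ``for all but $\epsilon p$ values of $a$'' and the exceptional prime set. Since the paper does not explain the deduction either, this ambiguity is not a flaw in your argument relative to the paper. Second, your description of the growth mechanism is slightly garbled: from $P_{n+1}(a) = \{-1,0,1\} + a\,P_n(a)$, the element $x$ is missed at stage $n+1$ only if $(x-1)/a$, $x/a$, and $(x+1)/a$ all lie outside $P_n(a)$ --- three residues spaced by $1/a$, not three consecutive residues. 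The conceptual point (failure to grow forces special structure on $a$) survives, but the literal statement is off. Neither point undermines the overall plan, which is a faithful account of how such a citation would be unpacked.
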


\begin{remark}
Alternatively, we can use Theorem 1 from \cite{breuillard2022cut} instead of Theorem 2. This yields a weaker bound of $n \leq C_{\epsilon} \log p \log \log p$, but it holds for \emph{all} primes $p$.
\end{remark}

We now show how to do deduce the stated diameter bound from this. First of all, using basic properties of the random Lie bracket, we show that obtaining $P_n(a)$-multiples of $[A,B] \in \slfrak_2(\F_p)$ grows linearly with $n$.

\begin{lemma}
    \label[lemma]{lemma:line_diameter}
    Let $A, B \in \slfrak_2(\F_p)$ and $S = \{0, \pm A, \pm B\}$. Let $\alpha = 2 \langle A, B \rangle$. Then for any $n \in \N$, we have
    \[
        P_n(\alpha) \cdot [A, B] \subseteq S^{4n + 2}.
    \]
\end{lemma}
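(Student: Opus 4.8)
The plan is to induct on $n$, driven by a single structural identity: applying $\ad_A\ad_B$ to any vector on the line $\lin\{[A,B]\}$ multiplies it by $\alpha$. This follows from the action of the adjoint operators computed just after \Cref{lemma:ad^2}: one has $\ad_B[A,B] = -\gamma A + \alpha B$ (with $\gamma = 2\langle B,B\rangle$), and then $\ad_A(-\gamma A + \alpha B) = -\gamma[A,A] + \alpha[A,B] = \alpha[A,B]$. Hence $[A,[B,X]] = \alpha X$ for every $X \in \lin\{[A,B]\}$ by linearity. I would record this as the first step.

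For the base case $n = 0$ we have $P_0(\alpha) = \{-1,0,1\}$, and $\{-[A,B],\,0,\,[A,B]\} \subseteq S^2$, since $[A,B] \in [S,S]$, $-[A,B] = [B,A] \in [S,S]$, and $0 \in S$; this matches $4\cdot 0 + 2 = 2$. For the inductive step, write a general element of $P_{n+1}(\alpha)$ as $b_0 + \alpha q$ with $b_0 \in \{-1,0,1\}$ and $q \in P_n(\alpha)$, by peeling off the constant term and factoring $\alpha$ out of the rest. By the inductive hypothesis $q\cdot[A,B] \in S^{4n+2}$. Bracketing successively against $B$ and then $A$ (both of which lie in $S$, so each bracket raises the weighted-ball index by one via the $j=1$ terms in the definition of $S^{k}$) and invoking the identity from the first step gives
\[
\alpha q\cdot[A,B] = [A,[B,\,q[A,B]]] \in \bigl[S,[S,S^{4n+2}]\bigr] \subseteq S^{4n+4}.
\]
Since $b_0[A,B] \in S^2$ as in the base case, we conclude $(b_0 + \alpha q)\cdot[A,B] \in S^2 + S^{4n+4} \subseteq S^{4(n+1)+2}$, the last inclusion being the $j=2$ term in the definition of $S^{4n+6}$. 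This closes the induction and gives $P_n(\alpha)\cdot[A,B] \subseteq S^{4n+2}$ for all $n$.

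The argument is essentially bookkeeping, and I do not expect a genuine obstacle; the only point requiring care is that the index must land exactly on $4n+2$, so one must budget precisely two bracket operations per degree for the multiplication by $\alpha$ and two additive levels for reinserting $[A,B]$ at each stage, and note that $0 \in S$ is what allows handling $b_0 = 0$ and embedding lower-level balls into higher ones.
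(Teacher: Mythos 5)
Your proof is correct and follows essentially the same route as the paper's: induct on $n$, peel off the constant coefficient to write an element of $P_{n+1}(\alpha)$ as $b_0+\alpha q$ with $q\in P_n(\alpha)$, realize multiplication by $\alpha$ on $\lin\{[A,B]\}$ as a double adjoint (you use $\ad_A\ad_B$, the paper uses $\ad_B\ad_A$, both yielding $\alpha\cdot\mathrm{id}$ on the line), and then budget two bracket steps plus one addition step, landing at $S^{4n+6}=S^{4(n+1)+2}$. The only differences are cosmetic — the order of the two adjoints and a more explicit treatment of the base case.
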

\begin{proof}
    We prove this by induction on $n$. The statement is trivial for $n = 0$. Suppose it holds for $n$ and take any $x \in P_{n + 1}(\alpha)$. Then $x = \alpha y + z$ for some $y \in P_n(\alpha)$ and $z \in \{-1, 0, 1\}$. By the induction hypothesis, we have $y[A, B] \in S^{4n + 2}$, and so $\alpha y[A, B] = [B, [A, y[A, B]]] \in S^{4n + 4}$ by \Cref{lemma:ad^2}. Then 
    \[
        x[A, B] = \alpha y[A, B] + z[A, B] \in S^{4n + 6} = S^{4(n + 1) + 2}.\qedhere
    \]
\end{proof}

Secondly, we show that, with high probability, the value $\alpha = 2 \langle A, B \rangle$ belongs to any large enough set.

\begin{lemma}
    \label[lemma]{lemma:good_alpha}
    Let $\mathcal A \subseteq \F_p$ be a subset of $\F_p$ with $\abs{\mathcal A} \geq (1 - \epsilon)p$ and let $A, B \in \F_p$ be uniformly random. Then
    \[
        \Pr_{A, B}\left(\alpha \notin \mathcal A\right) \leq 3\epsilon.
    \]
\end{lemma}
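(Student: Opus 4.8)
The plan is to reduce the statement to the single structural fact that the trace form $\langle X, Y \rangle = \tr(XY)$ on $\slfrak_2(\F_p)$ is nondegenerate for $p \geq 3$ (this is exactly the inner product underlying the $O(3)$–action in \Cref{sec:gram_matrix_fibers}; in the Witt basis $e, f, h$ its Gram matrix has determinant $-2 \neq 0$). From this I get that, conditioned on $A \neq 0$, the value $\alpha = 2 \langle A, B \rangle = 2\tr(AB)$ is \emph{exactly} uniform on $\F_p$, and the exceptional event $A = 0$ contributes only $p^{-3}$.

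First I would fix $A \in \slfrak_2(\F_p)$ with $A \neq 0$ and consider the map $\slfrak_2(\F_p) \to \F_p$, $B \mapsto 2\tr(AB)$. By nondegeneracy of the trace form this is a nonzero $\F_p$-linear functional on a $3$-dimensional space, hence surjective with every fiber of cardinality $p^2$. Therefore, for $B$ uniform in $\slfrak_2(\F_p)$ and every $a \in \F_p$, we have $\Pr_B(\alpha = a) = p^2/p^3 = 1/p$; that is, $\alpha$ is uniformly distributed on $\F_p$ given $A \neq 0$. Next I would condition on whether $A = 0$. Writing $\mathcal A^c = \F_p \setminus \mathcal A$, which has $\abs{\mathcal A^c} \leq \epsilon p$, this gives
\[
    \Pr_{A, B}(\alpha \notin \mathcal A)
    \leq \Pr_A(A = 0) + \Pr_{A, B}(\alpha \in \mathcal A^c \mid A \neq 0)
    = \frac{1}{p^3} + \frac{\abs{\mathcal A^c}}{p}
    \leq \frac{1}{p^3} + \epsilon .
\]
If $\mathcal A^c = \emptyset$ the left-hand side is $0$; otherwise $\abs{\mathcal A^c} \geq 1$ forces $(1 - \epsilon)p \leq p - 1$, i.e. $\epsilon \geq 1/p \geq 1/p^3$, so the bound is at most $2\epsilon$. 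Either way it is at most $3\epsilon$, as claimed (in fact the argument yields the slightly stronger $2\epsilon$).

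There is no real obstacle here: every step is a direct count, and the only points needing care are recalling that the trace form is nondegenerate (so the fibers of $B \mapsto 2\tr(AB)$ are genuinely equidistributed for $A \neq 0$) and isolating the degenerate locus $A = 0$. If one prefers to avoid conditioning, the same conclusion follows by the cruder fiber count $\abs{\{(A,B) : \alpha = a\}} \leq p^3 \cdot p^2 + p^3$, where the last term accounts for the pairs with $B = 0$ and is relevant only for $a = 0$; this gives $\Pr_{A,B}(\alpha = a) \leq p^{-1} + p^{-3}$ for all $a$, and a union bound over $\mathcal A^c$ again produces the bound $2\epsilon \leq 3\epsilon$.
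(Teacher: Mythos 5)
Your proof is correct, and it takes a genuinely different (and cleaner) route than the paper's. The paper deduces the lemma from Proposition~\ref{prop:gram_matrix_fibers}: it looks at the preimage under the full Gram map $G$ of the set of symmetric matrices $X$ with $2X_{12}\notin\mathcal A$, and uses the bound $\abs{G^{-1}(X)}\le 3p^3$ (the worst case being the rank-one fibers of size $2p^3+p^2-p$) to get $3\epsilon$. You instead marginalize out $\beta,\gamma$ entirely: fix $A\ne 0$, use nondegeneracy of the trace form to see that $B\mapsto 2\tr(AB)$ is a surjective linear functional with equal fibers of size $p^2$, so $\alpha$ is exactly uniform given $A\ne 0$, and the exceptional event $A=0$ costs only $p^{-3}$. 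This is more elementary (it does not need the $O(3)$-orbit analysis underlying Proposition~\ref{prop:gram_matrix_fibers}), it is self-contained, and it yields the slightly sharper constant $2\epsilon$. The one slip is cosmetic and in your aside only: in the crude fiber count, the degenerate $p^3$ term should be attributed to $A=0$ (the variable you fixed), not $B=0$; this does not affect the bound. Your case analysis showing $1/p^3\le\epsilon$ whenever $\mathcal A^c\ne\emptyset$ (since then $\epsilon p\ge 1$) is exactly the point that lets you absorb the additive error into a multiple of $\epsilon$, and it is handled correctly.
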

\begin{proof}
    Let $\mathcal X \subseteq \Sym_2(\F_p)$ be the set of symmetric matrices $X$ with $2 X_{12} \notin \mathcal A$. By assumption, we have $\abs{\mathcal X} \leq \epsilon p^3$. By \Cref{prop:gram_matrix_fibers}, we then get then bound $\abs{G^{-1}(\mathcal X)} \leq 3\epsilon p^6$. The result follows.
\end{proof}

We are now ready to deduce \Cref{thm:diameter}.

\begin{proof}[Proof of \Cref{thm:diameter}]
    Let $T$ be the exceptional set of primes from \Cref{breuillard varju field diameter}. Take any $\epsilon > 0$. Then there is a $C$ such that for all primes $p > C$ not in $T$ and at least $(1 - \epsilon)p$ values of $a \in \F_p$, we have $P_n(a) = \F_p$ for $n = \floor{10\log p}$.
    Hence, by \Cref{lemma:good_alpha}, for $A, B \in \slfrak_2(\F_p)$ uniformly random we have $P_n(\alpha) = \F_p$ with probability at least $1 - 3\epsilon$. This implies, by \Cref{lemma:line_diameter}, that $\lin\{[A, B]\} \subseteq S^{4n + 2}$. 
    
    We can assume that $C$ (and therefore $p$) is large enough that with probability at least $1 - \epsilon$, the matrices $A$, $B$ and $[A, B]$ span $\slfrak_2(\F_p)$ as a vector space.\footnote{Write $A = (a_{ij})$, $B = (b_{ij})$. Then $A,B,[A,B]$ span $\slfrak_2(\F_p)$ if and only if the $3 \times 4$ matrix of vectorized $A,B,[A,B]$ is of full rank, and so some minor is nonzero. By the Schwartz-Zippel lemma, this occurs with probability at least $1 - 4/p$.} In that case, we can write any element of $\slfrak_2(\F_p)$ as $x A + y B + z [A, B]$ for some $x, y, z \in \F_p$. The Gram matrix $G_{A,B}$ is invertible with probability at least $1 - 4/p > 1 - \epsilon$. Thus there are $r, s \in \F_p$ such that $(r, s) \cdot G_{A,B} = (y/4, -x/4)$, meaning that $-2\alpha r - 2\gamma s = x$ and $2\beta r + 2\alpha s = y$. Then, by \Cref{lemma:ad^2},
    \[
        x A + y B + z [A, B] = [A, r [A, B]] + [B, s [A, B]] + z [A, B] \in S^{12n + 8}.
    \]
    Hence $S^{12n + 8} = \slfrak_2(\F_p)$, and so with probability at least $1 - 5\epsilon$, we have the desired bound on the diameter.
\end{proof}

\bibliographystyle{alpha}
\bibliography{references}

\end{document}